\tikzset{
  mid arrow/.style={postaction={decorate,decoration={
        markings,
        mark=at position .5 with {\arrow[#1]{stealth}}
      }}},
}
\setlist[description]{leftmargin=0cm,  labelindent=\parindent}
\newtheoremstyle{special-example}
  {}
  {}
  {}
  {\parindent}
  {\bfseries}
  {:}
  { }
  {}
  \theoremstyle{special-example}
\newtheorem{example}[equation]{Example}
\DeclareMathOperator{\Stab}{Stab}
\DeclareMathOperator{\ord}{ord}
\title{Double Kodaira fibrations with small signature.}
\author{Ju A Lee}
\address{Ju A Lee \\ Department of Mathematial Sciences \\ Seoul National University \\ Seoul 151-747, Korea}
\email{jualee@snu.ac.kr}
\author{Michael L\"onne}
\address{Michael L\"onne\\ Mathematik VIII\\ Universit\"at Bayreuth\\
Universit\"atstrasse 30\\ 95447 Bayreuth\\ Germany
}
\email{michael.loenne@uni-bayreuth.de}
\author{S\"onke Rollenske}
\address{S\"onke Rollenske\\FB 12/Mathematik und Informatik\\
Philipps-Universit\"at Marburg\\
Hans-Meerwein-Str. 6\\
35032 Marburg\\
Germany}
\email{rollenske@mathematik.uni-marburg.de}
\begin{document}
\begin{abstract}
Kodaira fibrations are surfaces of general type with a non-isotrivial fibration, which
are differentiable fibre bundles. They are known to have positive signature divisible
by $4$. Examples are known only with signature 16 and more.
We review approaches to construct examples of low signature 
which admit two independent fibrations.
Special attention is paid to ramified covers of product of curves which we analyse by
studying the monodromy action for bundles of punctured curves.

As a by-product we obtain a classification of all fix-point-free automorphisms on curves
of genus at most $9$. 
\end{abstract}
 \subjclass[2010]{57R22;14J29,  14D05}
\keywords{signature, Kodaira fibration, surface bundle, monodromy}

\maketitle

\setcounter{tocdepth}{1}
\tableofcontents

\section{Introduction}	
 In \cite{chs57} Chern, Hirzebruch and Serre showed that the signature is multiplicative in fibre bundles, provided the fundamental group of the base acts trivially on the cohomology of the fibre, and asked if this condition was necessary. This was answered positively by Kodaira in \cite{kodaira67}, and independently by Atiyah in \cite{atiyah69}, who both constructed an example of the following type:
\begin{defin} A \emph{Kodaira fibration} is a holomorphic submersion (with connected fibres) of a compact complex surface $f\colon S\to B$ which is not isotrivial, that is, not all fibres are biholomorphic to each other.
\end{defin}
For such surfaces the signature is indeed positive, for example, because of the formula
\[ \sigma(S ) = 4\int_B \mu_f^*c_1(\IE)\]
of \cite{atiyah69, smith99}, where $\mu_f$ is the induced map to the moduli space of curves and $c_1(\IE)$ is the first Chern class of the Hodge bundle, which is an ample class. 
It is well known that in this situation $b= g(B)\geq2$ and the genus of the fibre is at least $3$ \cite{Kas,Meyer}.

Both Kodaira and Atiyah constructed these algebraic surfaces as ramified covers of products of curves, so that they come with a pair of Kodaira fibrations. 
\begin{defin}\label{defin: double etale Kodaira} A \emph{double Kodaira fibration} is a compact complex surface $S$ together with a finite map $f\colon S\to B_1\times B_2$ to a product of curves  such that composition with the projections onto the factors induces two (necessarily different) Kodaira fibrations $f_i\colon S\to B_i$ on $S$.

Let $D\subset B_1
 \times B_2$ be the branch divisor. We call $S$ double \'etale, if the induced projections $D\to B_i$ are both unramified coverings. We say $S$ is of \emph{graph type} if $D$ is the disjoint union of graphs of maps from $B_1$ to $B_2$.
 \end{defin}
 The definition of graph type induces a slight asymmetry in the data, but often one starts from $D$ being a disjoint union of graphs of automorphisms. 

Both the algebraic geometric and the topological side of the construction have been studied intensively. Concentrating on deformations and moduli of (double \'etale) Kodaira fibrations we have \cite{Kas, jy83, cat-roll09}, and it was also shown that being a double Kodaira fibration is determined by the homotopy type of a compact complex surface \cite[Prop.~2.5]{cat-roll09}. For usual Kodaira fibrations a similar result can be found in \cite[Thm. 13.7]{Hillman} and \cite{kotschick99}. For the geometric construction using ramified covering such as \cite{kodaira67} and \cite{atiyah69}, the monodromy group was also studied in \cite{gonzalez99}.
In 4-dimensional topology, there have been studies on the determination of possible signature and the Euler characteristic of a smooth surface bundle over a surface. In \cite{kotschick99} the inequality between the signature and the Euler characteristic was proved, while \cite{En,BDS,EKKOS,BD,Lee} addressed the minimal base genus function $b(f,n)$, defined as the smallest value of $b$ for which a smooth bundle over a surface of genus $b$ with a fibre genus $f$ and a signature $4n$ exists. In particular, we have several examples of smooth $4$-manifolds with signature $4$ which are surface bundles with small genera of fibre and base constructed using the technique called the subtraction of Lefschetz fibrations \cite{EKKOS,Lee}. However, it is unknown if the total space of these examples admits any complex structure, nor if there exists any other complex surface with signature 4 diffeomorphic to a smooth surface bundle.

There are algebraic geometric variants of the construction which do not yield double \'etale or even double Kodaira fibrations \cite{gdh91, zaal95}.

The motivation for this paper came from a question left open in \cite{cat-roll09}, where the aim (among others) was to construct Kodaira fibrations of large Chern slope ${c_1^2(S)}/{c_2(S)}$. This was done by the so-called tautological construction, which subsumes all previously known ones. In the original version it involves an \'etale pullback ``of sufficiently large degree'', which does not change the Chern slope but completely looses control over the signature and the genus of the base curve.

In this paper we first revisit and generalise the tautological construction to make the degree of the necessary pullback explicit (Section \ref{sect: effective tautological}). In order to compute it effectively, we need to study the monodromy action in fibre bundles of punctured curves, which is done in Section \ref{sect: monodromy computation}. 

We show by example (Section \ref{sect: computing}) that explicit computations are indeed possible, illustrating both algebraic and geometric ways to approach the problem.

We construct explicitly several new (and old) double \'etale Kodaira fibrations, including examples with signature $16$ (see Table \ref{tab: invariants} for an overview). We were unable to answer the question if there exists a Kodaira fibration with signature at most $12$ because  the topological complexity of a complete classification of (double \'etale) Kodaira fibrations of low signature goes beyond the scope of this paper. We illustrate this point in Section \ref{sect: virtual with small signature} and by classifying all fixed point-free automorphisms on algebraic curves of small genus (up to $9$) in Appendix \ref{sect: fpf autos}.

\subsubsection*{Acknowledgements} 
The first author would like to thank Michael L\"onne and S\"onke Rollenske for this nice opportunity of collaboration, and  grateful acknowledges two invitations to Marburg University. She is also grateful to Jongil Park for his guidance and his constant support, which allowed to be interested in signature of surface bundles. 
She was partially supported by BK21 PLUS SNU Mathematical Sciences Division.

The second author gratefully acknowledges the hospitality at Marburg university on two occasions and the support of the ERC 2013 Advanced Research Grant 340258-TADMICAMT. 
The third author acknowledges support of the DFG through the Emmy Noether program and partially through CRC 701. He is grateful to Jongil Park for the invitation to the 2015 SNU Topology Winter School, where the project was initiated, and to Fabrizio Catanese for their earlier collaboration on Kodaira fibrations.

Both the second and the third author thank Fabrizio Catanese for the invitation to the 2016 Workshop on Arithmetic and Geometry, where some of the results reached their final form. 

\section{Notation and formulas}

Our construction of double  Kodaira fibrations, as defined in Definition \ref{defin: double etale Kodaira}, starts from a product of curves together with the branch divisor $D$, so we introduce some notation for this.

\begin{defin}\label{defin: virtual Kodaira}
A virtual Kodaira fibration consists of the following data:
\begin{itemize}
 \item A product $F\times B$ of curves of genus at least $2$.
  \item A curve $D\subset F\times B$ such that both projections restrict to unramified coverings on $D$. 
 \item A surjection  $\theta\colon \pi_1(\hat F) \to G$ where $G$ is a finite group and $\hat F$ is  a general fibre of $F\times B \setminus D\to B$. 
 
 If $\gamma$ is a small loop in $\hat F$ around an intersection point of $D$ and $F$ then the order of $\theta(\gamma)$ in $G$ is called the local ramification order of $\theta$. We assume that the local ramification order is at least $2$ for each intersection point.
 
 We assume in addition the following compatibility condition for $\theta$: If $D_0$ is a component of $D$ and $\gamma$ and $\gamma'$ are small loops in $F$ around two points of $F\cap D_0$ then $\theta(\gamma)$ and $\theta(\gamma')$ are conjugate in $G$.
 
\end{itemize}
We call a virtual Kodaira fibration $\ka$ realisable, if $\theta$ is the restriction of a homomorphism $\Theta\colon \pi_1(F\times B \setminus D)\to G$.

We also extend Definition \ref{defin: double etale Kodaira} to virtual Kodaira fibrations in the obvious way.
\end{defin}

\begin{rem} 
The compatibility condition on $\theta$ is necessary for $\theta$ to be extendible and thus for $\ka$ to be realisable. 
 This follows from the fact that meridians of a connected divisor 
inside a smooth complex variety belong to a single
conjugacy class of the fundamental group of the complement.
So the claim is true for any extension of $\theta$ and hence we put it as a condition for
$\theta$ itself.
\end{rem}

Definition \ref{defin: virtual Kodaira} is motivated by the following:
\begin{prop}\label{prop: realisable}
 If $f\colon S\to F\times B$ is a double \'etale Kodaira fibration such that $f$ is a Galois cover  then the unramified cover $S\setminus \inverse f D\to F\times B\setminus D$ induces a realisable virtual Kodaira fibration $\ka(S) = (F\times B, D, \theta\colon \pi_1(\hat F) \to G)$.
 
 Conversely, for every realisable virtual Kodaira fibration $\ka = (F\times B, D, \theta)$ there exists a smooth $G$ cover $f\colon S\to F\times B$ which is a double \'etale Kodaira fibration.
\end{prop}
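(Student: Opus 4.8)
The plan is to prove the two implications separately: the forward direction is essentially a repackaging of the covering data, whereas the converse requires building a cover and checking that it is a submersion.

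\emph{Forward direction.} Suppose $f\colon S\to F\times B$ is a double \'etale Kodaira fibration which is Galois with group $G$. Then $f$ is \'etale over the complement, so $f\colon S\setminus\inverse{f}D\to F\times B\setminus D$ is a connected Galois covering, classified by a surjection $\Theta\colon\pi_1(F\times B\setminus D)\to G$ (well defined up to conjugation once basepoints are fixed). Setting $\theta:=\Theta\circ\iota_*$, where $\iota\colon\hat F\hookrightarrow F\times B\setminus D$ is the inclusion of a general fibre of the projection to $B$, I would check the conditions of Definition \ref{defin: virtual Kodaira} directly. Surjectivity of $\theta$ is equivalent to connectedness of $\inverse{f}(\hat F)$, which holds because $S\to B$ is a Kodaira fibration and hence has connected fibres, and removing finitely many ramification points from a fibre keeps it connected. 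Each local ramification order is the order of $\Theta$ on the corresponding meridian, which is at least $2$ exactly because $D$ lies in the branch locus. The compatibility condition is inherited from $\Theta$ by the Remark following Definition \ref{defin: virtual Kodaira}, and realisability is automatic since $\theta$ is by construction a restriction of $\Theta$.

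\emph{Converse.} Starting from a realisable $\ka=(F\times B,D,\theta)$ and a chosen extension $\Theta$, note that surjectivity of $\theta$ forces $\Theta$ to be surjective, so $\Theta$ determines a connected \'etale Galois $G$-cover $S^\circ\to F\times B\setminus D$. I would take $S$ to be the normalisation of $F\times B$ in the function field of $S^\circ$, obtaining a normal surface with a finite map $f\colon S\to F\times B$ that restricts to $S^\circ$ over the complement and is branched along $D$. The technical heart is to show that $S$ is smooth and that both compositions $f_B\colon S\to B$ and $f_F\colon S\to F$ are holomorphic submersions; here the \'etale hypotheses enter decisively. Since $D$ is smooth and $D\to B$ is \'etale, $D$ is transverse to every fibre $F\times\{b\}$; in local coordinates $(x,y)$ with $D=\{y=0\}$ the cover is, on a transverse disc, the cyclic cover of order $m=\ord \theta(\gamma)\ge 2$ given by the meridian $\gamma$, so $f$ is locally modelled by $(x,t)\mapsto(x,t^{m})$. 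This shows at once that $S$ is smooth and that $\inverse{f}(F\times\{b\})$ is a smooth branched $G$-cover of $F$ for every $b$; being proper with everywhere smooth fibres, $f_B$ is a submersion, hence a fibre bundle by Ehresmann. Exchanging the roles of $F$ and $B$ and using that $D\to F$ is \'etale handles $f_F$. Connectedness of the fibres of $f_B$ is again equivalent to surjectivity of $\theta$, and for $f_F$ one uses the analogous surjectivity on the $B$-side.

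It remains to verify that $f_B$ and $f_F$ are genuine Kodaira fibrations, that is, non-isotrivial with fibres of genus at least $2$, and I expect this to be the main obstacle. The genus bound follows from Riemann--Hurwitz, each fibre being a ramified cover of a curve of genus at least $2$. For non-isotriviality I would use that the branch set $R_b=\{u\in F:(u,b)\in D\}$ varies genuinely with $b$: no component of $D$ is vertical of the form $\{u_0\}\times B$, because $D\to F$ is \'etale, so $b\mapsto R_b$ is nonconstant. As $\operatorname{Aut}(F)$ is finite while $R_b$ moves continuously, the marked curve $(F,R_b)$ cannot be constant even up to automorphism, so the family of covers $\inverse{f}(F\times\{b\})$ is not isotrivial. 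Together with the symmetric statement for $f_F$ this shows that $f\colon S\to F\times B$ is a smooth $G$-cover carrying two Kodaira fibrations, and it is double \'etale since $D$ projects to unramified coverings by hypothesis.
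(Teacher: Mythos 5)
Your overall route is the same as the paper's: compactify the $G$-cover determined by $\Theta$ (normalisation versus the Riemann extension theorem is immaterial), use smoothness of $D$ and \'etaleness of $D\to B$ to get smoothness of $S$ and submersivity of $f_B$ via the local model $(x,t)\mapsto(x,t^m)$, and derive non-isotriviality from the fact that the branch locus $R_b$ genuinely moves while $\Aut F$ is finite. Up to and including the submersion step your argument is correct and, if anything, more explicit than the paper's.

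There is, however, a genuine gap in the final step. You write that since $(F,R_b)$ is not constant up to automorphism, ``the family of covers $\inverse{f}(F\times\{b\})$ is not isotrivial.'' This implication is not justified: isotriviality only asserts that the \emph{abstract} curves $S_b$ are all isomorphic to a fixed curve $C$, and a priori abstractly isomorphic curves could arise as $G$-covers of $F$ with genuinely different branch loci, since an abstract isomorphism $S_b\isom S_{b'}$ need not be compatible with the covering maps to $F$. What closes this gap in the paper is the de Franchis theorem: there are only finitely many non-constant holomorphic maps $C\to F$ (as $g(F)\geq 2$), each with a fixed branch locus, so if all fibres were isomorphic to $C$ then $R_b$ would take only finitely many values; by continuity and connectedness of $B$ it would be constant, contradicting surjectivity of $D\to F$. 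Your argument needs this finiteness statement inserted before the word ``so''; without it the contrapositive you are implicitly using is unproved. As a minor secondary point, your appeal to ``the analogous surjectivity on the $B$-side'' for connectedness of the fibres of $f_F$ is asserted rather than derived from the data $(F\times B, D,\theta)$ — the paper elides this too by ``repeating the argument,'' but it is worth being aware that surjectivity of $\Theta$ restricted to $\pi_1(\{u\}\times B\setminus D)$ does not follow formally from surjectivity of $\theta$.
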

\begin{proof}
Let us explain how to construct a double \'etale Kodaira fibration from a realisable virtual Kodaira fibration; the converse is then clear.

Since $\ka$ is realisable, there is $\Theta\colon \pi_1(F\times B \setminus D)\to G$ extending $\theta$. Then the unramified cover $\hat S\to F\times B \setminus D$ corresponding to $\Theta$ can be compactified to a ramified Galois cover $f\colon S\to F\times B$ by the Riemann extension theorem \cite{GR58}.

The compact complex surface $S$ is smooth, because the branch locus $D$ is assumed to be smooth. We claim that the projection $f_1\colon S\to B$ is a holomorphic submersion but not isotrivial: for any point $b\in B$ the fibre $S_b$ is a branched cover of $F$, branched over $D_b = D\cap F\times \{b\}$ with monodromy given by $\theta$, where we use that the projection $D\to B$ is \'etale to guarantee that $D_b$ consists of the same number of disjoint points for every $b$. In particular, every fibre is smooth and $f_1$ is a submersion.

Assume $f_1$ is isotrivial, that is,  all fibres are abstractly isomorphic to a fixed curve $S_b$. Then, since there are only finitely many non-constant holomorphic maps $S_b\to F$ and a Galois cover of $F$ is determined by its branch locus and monodromy, we infer that there are only finitely many isomorphism classes of pairs $(F, D_b)_{b\in B}$. But this is impossible because $\Aut F$ is finite and the projection map $D\to F$ is surjective. 

Therefore the holomorphic submersion $f_1$ is not isotrivial and $S\to B$ is a Kodaira fibration. Repeating the argument for the projection to $F$ shows that $S\to F\times B$ is indeed a double \'etale Kodaira fibration.
\end{proof}

\begin{rem}
Let $D\subset F\times B$ be a curve mapping \'etale to both factors. 
By taking a Galois cover $g\colon \tilde B\to B$ dominating all components of $D$ one can arrange after pullback that $(\id\times g)^*D\subset F\times \tilde B$ is a union of graphs of \'etale maps from $\tilde B$ to $F$. 

It is an intriguing question if it is always possible to find a common covering   $g\colon \tilde B\to B$ and $h\colon \tilde B \to F$ such that $(h\times g)^*D \subset \tilde B\times \tilde B$ is a disjoint union of graphs of automorphisms. 

This lead to the notion of \emph{standard Kodaira fibration} in \cite{cat-roll09}.
\end{rem}

For simplicity of notation, we restrict to graph type in the next Definition; formulas for the general case can be found in \cite{cat-roll09}.
\begin{defin}
Let  $\ka= (F\times B, D = \sum D_i, \theta\colon \pi_1(\hat F) \to G)$ be a virtual Kodaira fibration of graph type, and let $r_i$ be the local ramification order at $D_i\cap F$.  We define the virtual Chern classes, signature and slope of $\ka$ to be
\begin{align*}
& c_2(\ka) = e(B) e(\tilde F) = |G|e(B)\left( e(F) - \sum_{i=1}^m \frac{r_i-1}{r_i}  \right), \\
 &c_1^2(\ka) = 2c_2(S) - |G| e(B) \sum_{i=1}^m \frac{r_i^2-1}{r_i^2},\\
 & \sigma(\ka) = \frac 1 3 \left( c_1^2(\ka)-2c_2(\ka)\right) = -\frac{ |G| e(B)}{3} \sum_{i=1}^m \frac{r_i^2-1}{r_i^2}, \\
 & \nu(\ka) = \frac{c_1^2(\ka)}{c_2(\ka)}.
\end{align*}

If $g\colon \tilde B \to B$ is a finite \'etale map, then the pullback of the virtual Kodaira fibration $\ka$ is $g^* \ka = (\tilde F\times B,(g\times \id_F)^*D, \theta)$. 
\end{defin}
Note that for a realisable Kodaira fibration these invariants coincide with the usual ones by \cite{cat-roll09}.

  By the tautological construction of \cite{cat-roll09} every virtual Kodaira fibration is realisable after some finite \'etale pullback. We will revisit this in Section \ref{sect: effective tautological}, keeping track exactly of the degree of the necessary pullback, which is needed to compute the signature. For convenience we introduce a notation.
\begin{defin}
 Let $\ka= (F\times B, D, \theta)$ be a virtual Kodaira fibration. Let $g\colon \tilde B \to B$ be a map of minimal degree such that $g^*\ka$ is realisable. We call $\tilde b =\tilde b(\ka) = g(\tilde B)$ the realisation-genus of $\ka$ and $\tilde \sigma  = \sigma(g^*\ka)$ the realisation-signature of $\ka$.
\end{defin}
Recall that by \cite{Meyer} 
we have $\tilde \sigma(\ka)\in 4\IN$ for a realisable virtual Kodaira fibration.

As suggested by the above, ramified coverings of algebraic curves play a prominent role in the construction of examples, and we introduce the following notation.
\begin{defin}\label{def: ram type}
Let $g\colon B\to B/G$ be a possibly ramified Galois cover of algebraic curves with Galois group $G$. 
The ramification type of $g$ is the tuple $(q\mid r_1,\cdots,r_m)$ where $q$ is  the genus of the quotient curve $B/G$, $m$ is the number of branch points and the  $r_i$ are the corresponding ramification multiplicities. 

If  $G$ is a cyclic group generated by an automorphism $\phi$ of $B$, then we call the corresponding $(m+1)$-tuple the ramification type of $\phi$.
\end{defin}

\section{Effective tautological construction}\label{sect: effective tautological}

In \cite{cat-roll09} 
the tautological construction was used to show that every virtual Kodaira fibration is realisable after finite \'etale pullback. Since the focus in loc.\ cit.\ was on the slope, which is invariant under pullback, computing the degree of the pullback map was not important. However, to control the signature, we are interested in the smallest possible pullback.

\subsection{Set-up of notations and idea of construction}
Let $S$ be a product of curves and $\pi\colon S \to B$ be the projection on one factor and $F$ a general fibre. Let $D\subset S$ be a divisor, such that $\pi|_D$ is unramified.

We let $\hat S = S\setminus D$ and $\hat F= F\setminus D$ and $\iota\colon \hat F \to \hat S$ the inclusion. $\hat \pi = \pi_{\hat S}\colon \hat S \to B$ is a differentiably locally trivial fibre bundle with fibre $\hat F$. Let $\theta\colon \pi_1(\hat F)\onto G$ be a surjective homomorphism to a finite group satisfying the compatibility condition of Definition \ref{defin: virtual Kodaira} and let  $\tilde F\to F$ be the corresponding ramified cover, branched over $F\cap D$. 

In order to construct an actual Kodaira fibration as a $G$-cover of $F\times B$ we want to extend the representation $\theta$ to a representation $\Theta\colon \pi_1(\hat S) \to G$ (see Proposition \ref{prop: realisable}). The idea is now, that one considers an appropriately chosen subspace $Z\subset \hat S$ such that $\pi_1(\hat F)$ and the image of $\pi_1(Z)$ generate the full fundamental group. Then $\Theta$ is uniquely determined by $\theta$ and $\theta' = \Theta|_{\pi_1(Z)}$ and, thus the existence of $\Theta$  is equivalent to the existence of $\theta'$ satisfying some compatibility relations with the given $\theta$.

\subsection{Tautological construction if $D$ contains a graph}
In this section we analyse the special case in which there is a component $D_0\subset D$ such that $D_0\to B$ has degree $1$; we say then that $D$ contains graphs. Note that $-D_0^2 = 2b-2$ where $b= g(B) = g(D_0)$. 

\begin{lem}[\cite{cat-roll09}] 
 Let $D_0$ be a graph in $D$, let $T$ be a tubular neighbourhood of $D_0$ in $S$ and $T_0 = T\setminus D_0$. Let $\gamma_0$ be a generator of the fundamental group of $F\cap T_0\simeq S^1$, that is, a small loop in the fibre $F$ around the puncture $D_0\cap F$.
  Then there is a presentation 
\begin{equation}\label{eq: presentation T_0}
 \pi_1(T_0) = \langle\alpha_1, \dots, \alpha_b, \beta_1, \dots, \beta_g, \gamma_0\mid \prod[\alpha_i, \beta_i]=\gamma_0^{2b-2}, \text{$\gamma_0$ central}\rangle
\end{equation}
and a diagram with exact rows
  \[ \begin{tikzcd} \{1\} \rar & \langle\gamma_0\rangle\dar \rar & \pi_1(T_0)\dar\rar & \pi_1(B)\dar[equal]\rar & \{1\}\\
  \{1\}\rar &  \pi_1(\hat F) \rar & \pi_1(\hat S)\rar & \pi_1(B)\rar &\{1\}
  \end{tikzcd}\]
  Consequently, $\pi_1(\hat S)$ is generated by the image of $\pi_1(T_0)$ and $\pi_1(\hat F)$.
\end{lem}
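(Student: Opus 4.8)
The plan is to realise $T_0$ as an oriented circle bundle over $B$ and then to extract the whole lemma from the homotopy long exact sequences of the two bundles $T_0\to B$ and $\hat S\to B$. First I would fix the geometry. Since $\pi|_{D_0}\colon D_0\to B$ has degree one it is an isomorphism, so $D_0\cong B$ is a section of $\pi$ of genus $b$, and its normal bundle in $S=F\times B$ is the restriction of the vertical tangent bundle; hence I may take $T$ to be a \emph{fibrewise} tubular neighbourhood, a disc in each fibre $F_b$ around the point $D_0\cap F_b$. Then $\pi|_{T_0}\colon T_0\to B$ is itself an oriented $S^1$-bundle whose Euler number equals the self-intersection $D_0^2=-(2b-2)$ recorded in the text. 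The presentation \eqref{eq: presentation T_0} is then the standard description of the fundamental group of an oriented circle bundle over a closed surface of genus $b$: it is the central extension of $\pi_1(B)$ by the fibre class $\langle\gamma_0\rangle$ classified by the Euler class. Concretely, using the CW model of $B$ with one $0$-cell, $2b$ $1$-cells and a single $2$-cell attached along $\prod[\alpha_i,\beta_i]$, the bundle is trivial over the $1$-skeleton, which gives the free generators $\alpha_i,\beta_i$ together with the central fibre class $\gamma_0$, and attaching the $2$-cell imposes $\prod[\alpha_i,\beta_i]=\gamma_0^{2b-2}$ after orienting $\gamma_0$; centrality of $\gamma_0$ reflects the abelian structure group.

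Next I would build the commutative diagram. By hypothesis $\hat\pi\colon\hat S\to B$ is a locally trivial bundle with fibre $\hat F$, and $\pi|_{T_0}\colon T_0\to B$ is the circle bundle above. Because $D$ is smooth its components are disjoint, so a sufficiently small $T$ satisfies $T\cap D=D_0$ and therefore $T_0\subset\hat S$; moreover the inclusion $T_0\hookrightarrow\hat S$ is fibre-preserving over $\id_B$, since over $b$ it sends the fibre circle $F_b\cap T_0$ (the small loop around $D_0\cap F_b$) into $\hat F_b=F_b\setminus D$. It is thus a morphism of fibre bundles over $B$ and induces a morphism of the two homotopy long exact sequences. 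Since $B$ has genus $\geq 2$ we have $\pi_2(B)=0$, and both fibres are connected, so each long exact sequence truncates to a short exact sequence $1\to\pi_1(\text{fibre})\to\pi_1(\text{total})\to\pi_1(B)\to1$; naturality yields exactly the displayed diagram, with right-hand vertical the identity of $\pi_1(B)$ and left-hand vertical the map sending $\gamma_0$ to the meridian of $D_0$ in $\pi_1(\hat F)$ (injective, a meridian having infinite order).

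The final assertion is then a short diagram chase. Given $x\in\pi_1(\hat S)$, surjectivity of $\pi_1(T_0)\to\pi_1(B)$ in the top row provides $t\in\pi_1(T_0)$ with the same image $\bar x$ in $\pi_1(B)$; then $x$ times the image of $t^{-1}$ lies in the kernel of $\pi_1(\hat S)\to\pi_1(B)$, which by exactness of the bottom row is the image of $\pi_1(\hat F)$. Hence $x$ is a product of an element in the image of $\pi_1(\hat F)$ and one in the image of $\pi_1(T_0)$, which is the generation statement. The steps requiring genuine care are all geometric and lie in the middle paragraph: that a small fibrewise $T$ is disjoint from the remaining components of $D$, that $\pi|_{T_0}$ is a bundle over $B$ with the inclusion covering $\id_B$, and that its Euler number is $D_0^2$. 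Once these points and the resulting short exact sequences are in place, both the presentation and the final chase are formal.
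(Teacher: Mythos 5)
Your proof is correct. The paper itself gives no argument for this lemma (it is quoted from \cite{cat-roll09}), so there is nothing to compare line by line, but your route --- identify $T_0$ up to homotopy with the unit circle bundle of the normal bundle of $D_0\cong B$, read off the presentation from the Euler number $D_0^2=-(2b-2)$, and obtain both rows and the generation statement from the naturality of the homotopy exact sequences of the fibre-preserving inclusion $T_0\hookrightarrow\hat S$ over $\id_B$ --- is exactly the standard argument and the one the cited reference uses. The only cosmetic point: the disjointness of the components of $D$ (needed so that a small $T$ meets $D$ only in $D_0$) follows from the standing hypothesis that $\pi|_D$ is an unramified covering of $B$, rather than from ``smoothness of $D$'' per se; this does not affect the argument.
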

Note that the surjectivity fails if we do not assume $D_0$ to be isomorphic to $B$.

\begin{prop} \label{prop: tautological graph}
Assume that $D$ contains graphs.

Then $\theta$ is the restriction of a homomorphism $\Theta\colon \pi_1(\hat S) \to G$ if and only if there exists a graph $D_0\subset D$ and a  homomorphism $\theta'\colon \pi_1(T_0)\to G$, where $T_0$ is as above, with the following properties:
\begin{enumerate}
 \item $\theta(\gamma_0) = \theta'(\gamma_0)$
 \item For all $x\in \pi_1(\hat F)$ and all $y\in \pi_1(T_0)$ we have \[ \theta(x) = \theta'(\inverse y ) \theta(yx\inverse y) \theta'(y).\]
\end{enumerate}
If $G$ is abelian, then there exists $\theta'$ satisfying these conditions if and only if 
$\theta(\gamma_0^{2b-2}) = 0$ and  $\theta$ is invariant under the monodromy action of $\pi_1(B) $ on $\Hom(\pi_1(\hat F), G)$.
\end{prop}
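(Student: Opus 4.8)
Everything rests on the Lemma, which yields the exact sequence $\{1\}\to\pi_1(\hat F)\to\pi_1(\hat S)\to\pi_1(B)\to\{1\}$ --- so that $\pi_1(\hat F)$ is normal in $\pi_1(\hat S)$ --- together with the generation of $\pi_1(\hat S)$ by $\pi_1(\hat F)$ and the image of $\pi_1(T_0)$, and the identification of $\gamma_0\in\pi_1(T_0)$ with the meridian $\gamma_0\in\pi_1(\hat F)$ under the left vertical map. The direction where $\Theta$ is given is then routine: fixing any graph $D_0\subset D$ and setting $\theta':=\Theta\circ(\pi_1(T_0)\to\pi_1(\hat S))$, property (1) holds because both copies of $\gamma_0$ have the same image in $\pi_1(\hat S)$, while property (2) follows by applying the homomorphism $\Theta$ to the identity $x=\inverse y\,(yx\inverse y)\,y$, using that $yx\inverse y\in\pi_1(\hat F)$ by normality and that $\Theta$ restricts to $\theta$ and to $\theta'$ respectively.

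The substance lies in the converse. Given $D_0$ and a $\theta'$ satisfying (1) and (2), I would construct $\Theta$ by hand. Since $\pi_1(T_0)\to\pi_1(B)$ is surjective and $\pi_1(\hat F)$ is the kernel of $\pi_1(\hat S)\to\pi_1(B)$, each $s\in\pi_1(\hat S)$ factors as $s=x\bar y$ with $y\in\pi_1(T_0)$ chosen to lift the image of $s$ in $\pi_1(B)$ and $x:=s\bar y^{-1}\in\pi_1(\hat F)$; I then set $\Theta(x\bar y):=\theta(x)\theta'(y)$. Two checks remain. For well-definedness, if $x\bar y=x'\bar{y'}$ then $y,y'$ have the same image in $\pi_1(B)$, so by exactness of the top row $y'=\gamma_0^k y$ and hence $x'=x\gamma_0^{-k}$; the two candidate values then differ only by the factor $\theta(\gamma_0)^{-k}\theta'(\gamma_0)^{k}$ inserted between $\theta(x)$ and $\theta'(y)$, which is trivial exactly by (1). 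For multiplicativity, writing $s_i=x_i\bar y_i$ and moving $\bar y_1 x_2\bar y_1^{-1}$ back into $\pi_1(\hat F)$ by normality, the equation $\Theta(s_1s_2)=\Theta(s_1)\Theta(s_2)$ reduces precisely to $\theta(y_1x_2\inverse{y_1})=\theta'(y_1)\theta(x_2)\theta'(\inverse{y_1})$, which is condition (2). That $\Theta$ extends $\theta$ is seen by taking $y=1$. I expect this bookkeeping --- keeping the two roles of $\gamma_0$ apart and matching (1) to well-definedness and (2) to multiplicativity --- to be the only genuine difficulty.

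For the abelian statement I would read off what (1) and (2) say separately when $G$ is written additively. Condition (2) becomes simply $\theta(x)=\theta(yx\inverse y)$ for all $x\in\pi_1(\hat F)$ and $y\in\pi_1(T_0)$, with $\theta'$ dropping out; since inner automorphisms of $\pi_1(\hat F)$ act trivially on $\Hom(\pi_1(\hat F),G)$ for abelian $G$, and $\pi_1(\hat F)$ together with the image of $\pi_1(T_0)$ generates $\pi_1(\hat S)$, this is exactly invariance of $\theta$ under the monodromy action of $\pi_1(B)$. Condition (1) asks only for a homomorphism $\theta'\colon\pi_1(T_0)\to G$ with $\theta'(\gamma_0)=\theta(\gamma_0)$; passing to the abelianisation, the relation $\prod[\alpha_i,\beta_i]=\gamma_0^{2b-2}$ becomes $(2b-2)\gamma_0=0$, so such a $\theta'$ exists precisely when $\theta(\gamma_0)$ is annihilated by $2b-2$, i.e.\ $\theta(\gamma_0^{2b-2})=0$, the values on $\alpha_i,\beta_i$ being free. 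As these two conditions now constrain $\theta$ and $\theta'$ independently, a compatible $\theta'$ exists if and only if $\theta(\gamma_0^{2b-2})=0$ and $\theta$ is monodromy-invariant.
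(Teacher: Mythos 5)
Your proof is correct and follows essentially the same route as the paper: the same decomposition of an element of $\pi_1(\hat S)$ as a product of an element of $\pi_1(\hat F)$ and (the image of) an element of $\pi_1(T_0)$, with condition (1) accounting exactly for well-definedness of $\Theta$ and condition (2) for multiplicativity, and the same abelianisation argument $\pi_1(T_0)_{\mathrm{ab}}\cong H_1(B)\oplus\mathbb{Z}/(2b-2)$ splitting the abelian case into the two independent conditions. The only difference is that you spell out the bookkeeping (ambiguity of the decomposition up to powers of $\gamma_0$, reduction of multiplicativity to (2)) that the paper dismisses as straightforward.
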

\begin{proof}
We consider the following diagram of group homomorphisms:
\[ \begin{tikzcd}
     {}& \pi_1(\hat F)\arrow{dr}\arrow[out = 0, in = 135]{drrr}[swap]{\theta}\\
     \langle\gamma_0\rangle\arrow{ur}\arrow{dr} & & \pi_1(\hat S)\arrow[dashed]{rr}{\exists ? \Theta}&& G\\  
     & \pi_1(T_0)\arrow{ur}\arrow[dashed, out = 0, in = -135]{urrr}{\exists ? \theta'}
     \end{tikzcd}
\]
Note that since $\pi_1(\hat F)$ is a normal subgroup every element of $a\in \pi_1(\hat S)$ can be written as $a=xy=(x\gamma_0^k)(\gamma_0^{-k}y)$ with $x\in \pi_1(\hat F)$ and $y\in \pi_1(T_0)$, which are uniquely determined up to multiplication with $\gamma_0$ as indicated.

If $\Theta$ exists then we can define $\theta'$ by composition and  have 
\begin{equation}\label{eq: homomorphism}
\Theta(a) = \Theta(x)\Theta(y) = \theta(x)\theta'(y). 
\end{equation}

Conversely, if we are given $\theta'\colon \pi_1(T_0)\to G$ then \eqref{eq: homomorphism} defines a well-defined map of sets if and only if \refenum{i} holds. It is also straightforward to check, that \eqref{eq: homomorphism} defines a homomorphism if and only if  in addition \refenum{ii} holds.

If $G$ is abelian then $\theta'$ factors over $\pi_1(T_0)_{\text{ab}}=H_1(B)\oplus \langle\gamma_0\mid \gamma_0^{2b-2}\rangle$, so there exists a homomorphism $\theta'$ satisfying \refenum{i} if and only if $\theta(\gamma_0)^{2b-2}=0$; the second condition translates exactly into $\theta$ being invariant under the monodromy action. This concludes the proof.
\end{proof}

\subsection{Tautological construction: general case}
If $D$ does not contain a graph, then no tubular neighbourhood of a component of $D$ contains enough information to reconstruct $\Theta$, so instead we pull back the fibration to a wedge of circles.
\begin{lem}
 Let $b$ be the genus of the curve $B$ and choose disjoint based loops $\alpha_1, \beta_1, \dots, \alpha_b, \beta_b$ in $B$ representing a standard set of generators for $\pi_1(B)$. Let $\iota\colon \bigvee_{j=1}^{2b} S^1 \into B$ be the inclusion of the wedge of the chosen loops and $\hat\iota \colon \iota^*\hat S \to \hat S$ be the induced inclusion.
 Then there is a commutative diagram with exact rows and columns:
 \begin{equation}\label{diag: circle wedge}
\begin{tikzcd}
 {} &  & 1\dar & 1\dar\\
  {} &  & \pi_1(\hat F) \rar[equal]\dar & \pi_1(\hat F) \dar& \\
 1\rar& \ker(\hat\iota_*)  \rar\dar{\isom} & \pi_1(\iota^*\hat S)\rar\dar{p_*}& \pi_1(\hat S)\rar\dar{p_*}& 1\\
1\rar& \langle\langle \prod_i[\alpha_i, \beta_i]\rangle\rangle \rar & \langle \alpha_i, \beta_i\rangle \rar\dar&\pi_1(B)\rar\dar& 1\\
{}& &1&1\\
\end{tikzcd}
\end{equation}
\end{lem}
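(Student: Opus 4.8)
The plan is to realise the whole of \eqref{diag: circle wedge} as the comparison of the homotopy exact sequences of two fibre bundles with common fibre $\hat F$, linked by the pullback, and then to read off the horizontal rows by a diagram chase.

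First I would set up the two bundles. The bundle $\hat\pi\colon \hat S\to B$ is given, and pulling it back along $\iota$ produces a fibre bundle $\iota^*\hat S\to\bigvee_{j=1}^{2b}S^1$ with the \emph{same} fibre $\hat F$, together with the bundle map $\hat\iota\colon\iota^*\hat S\to\hat S$ lying over $\iota$ and restricting to the identity on each copy of $\hat F$. Since $B$ is a curve of genus $b\ge 2$ and $\bigvee_{j=1}^{2b}S^1$ is a $1$-complex, both base spaces are aspherical; in particular $\pi_2$ of each base vanishes and $\hat F$ is connected, so the homotopy exact sequence of each bundle collapses to a short exact sequence of fundamental groups. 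These are exactly the two vertical columns of \eqref{diag: circle wedge}, where $\pi_1\bigl(\bigvee_{j=1}^{2b}S^1\bigr)=\langle\alpha_i,\beta_i\rangle$ is the free group on the classes of the chosen loops and the downward maps are the bundle projections $p_*$ to the bases.

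Next I would dispose of the bottom row. Under $\iota_*$ the free generators $\alpha_i,\beta_i$ are sent to the standard generators of $\pi_1(B)$, which satisfy the single defining relation $\prod_i[\alpha_i,\beta_i]=1$; hence $\iota_*$ is surjective with kernel precisely the normal closure $\langle\langle\prod_i[\alpha_i,\beta_i]\rangle\rangle$. This is the exact bottom row, and it identifies the lower right square as the standard presentation of the surface group.

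Finally, the bundle map supplies a morphism between the two columns which is the identity on the fibre factor $\pi_1(\hat F)$, is $\hat\iota_*$ on total spaces and is $\iota_*$ on bases; commutativity of its squares is naturality of the homotopy sequence under $\hat\iota$. From this the middle row follows by a four-term chase: since the fibre map is the identity and $\iota_*$ is onto, one checks directly that $\hat\iota_*$ is surjective and that $p_*$ carries $\ker(\hat\iota_*)$ isomorphically onto $\ker(\iota_*)=\langle\langle\prod_i[\alpha_i,\beta_i]\rangle\rangle$ --- injectivity because an element of $\ker(\hat\iota_*)$ lying in the fibre is killed by the identity, surjectivity because any $p_*$-preimage of an element of $\ker(\iota_*)$ can be corrected by an element of the fibre. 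This yields the exact middle row and the labelled vertical isomorphism. The only genuine obstacle is that the snake lemma is not available verbatim for non-abelian groups, so these two facts must be verified by the explicit chase just sketched rather than quoted; once the two fibration sequences are in place and the fibre map is the identity, the remainder is formal.
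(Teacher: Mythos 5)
Your proof is correct and follows essentially the same route as the paper, which simply notes that the bottom row is the standard surface-group presentation, the two right columns are the homotopy exact sequences of the bundle and its pullback, and the rest is "an easy diagram chase". You have merely written out that chase (and rightly flagged that it must be done by hand rather than by quoting the snake lemma for non-abelian groups), so no further comparison is needed.
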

\begin{proof}
 The last row is the usual presentation of the fundamental group of a curve, and last two columns are the exact sequences associated to the fibre bundle $\hat S\to B$ and its pull-back.
 
 The remaining claims follow from an easy diagram chase.
\end{proof}

\begin{prop}\label{prop: tautological general}
 In the above situation the following holds:
 \begin{enumerate}
  \item The homomorphism $\theta$ is the restriction of a homomorphism  $\tilde \Theta\colon \pi_1(\iota^*\hat S)\to G$
if and only if there exist loops $\tilde \alpha_i, \tilde\beta_i\in \pi_1(\iota^*\hat{S})$ with  $p_*\tilde\alpha_i = \alpha_i$ and $p_*\tilde \beta_i = \beta_i$
and a homomorphism $\theta'\colon \langle \tilde \alpha_i, \tilde \beta_i\rangle\to G$ satisfying
  \begin{equation}\label{eq: monodromy invariant general}
\theta(x)=\theta'(y^{-1})\theta(yxy^{-1})\theta'(y) \text{~for all~} x\in\pi_1(\hat{F}) \text{~and for all~} y\in \langle\tilde{\alpha_i}, \tilde{\beta_i}\rangle_{i=1}^{b} 
 \end{equation}
\item
If the extension $\tilde \Theta $ exists then it descends to a homomorphism $\Theta\colon \pi_1(\hat S)\to G$ if and only if $\tilde \Theta$ is trivial on $\ker \hat{\iota}_*$.
 \end{enumerate}
\end{prop}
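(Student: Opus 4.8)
The plan is to exploit the one structural feature separating this from the graph case of Proposition~\ref{prop: tautological graph}: the base group $L:=\langle\alpha_i,\beta_i\rangle=\pi_1\!\left(\bigvee_{j=1}^{2b}S^1\right)$ is free. Consequently the middle column of \eqref{diag: circle wedge}, the extension
\[
1\to\pi_1(\hat F)\to\pi_1(\iota^*\hat S)\xrightarrow{p_*}L\to1,
\]
splits. First I would record this splitting explicitly: for any lifts $\tilde\alpha_i,\tilde\beta_i$ of the free generators, freeness of $L$ yields a unique homomorphic section $s\colon L\to\pi_1(\iota^*\hat S)$ with $s(\alpha_i)=\tilde\alpha_i$, $s(\beta_i)=\tilde\beta_i$; since $p_*\circ s=\id_L$ it is injective with image exactly $\langle\tilde\alpha_i,\tilde\beta_i\rangle$. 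Thus $\pi_1(\iota^*\hat S)$ is the semidirect product of the normal subgroup $\pi_1(\hat F)$ and the complement $\langle\tilde\alpha_i,\tilde\beta_i\rangle$, and every element has a unique expression $x\cdot y$ with $x\in\pi_1(\hat F)$ and $y\in\langle\tilde\alpha_i,\tilde\beta_i\rangle$, the action of $y$ on $x$ being conjugation, i.e.\ the bundle monodromy.

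For part~(1) the forward implication is immediate: given $\tilde\Theta$, choose any lifts and set $\theta'=\tilde\Theta|_{\langle\tilde\alpha_i,\tilde\beta_i\rangle}$; applying the homomorphism $\tilde\Theta$ to the identity $y^{-1}(yxy^{-1})y=x$ gives \eqref{eq: monodromy invariant general}. For the converse I would use the unique decomposition to \emph{define} $\tilde\Theta(x\cdot y):=\theta(x)\theta'(y)$, which is well defined, and then check it is a homomorphism. Writing $x_1y_1\cdot x_2y_2=(x_1\,y_1x_2y_1^{-1})(y_1y_2)$ and comparing $\tilde\Theta$ of the two sides reduces the homomorphism property to $\theta(y_1x_2y_1^{-1})=\theta'(y_1)\theta(x_2)\theta'(y_1)^{-1}$, which is a rearrangement of \eqref{eq: monodromy invariant general}. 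This bookkeeping is the only place requiring care, and I expect it to be the main, though mild, obstacle. It is worth stressing that no analogue of condition~(i) of Proposition~\ref{prop: tautological graph} appears here, precisely because the present splitting is genuine, with $\pi_1(\hat F)\cap\langle\tilde\alpha_i,\tilde\beta_i\rangle$ trivial, whereas in the graph case the central loop $\gamma_0$ lay in both subgroups.

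Part~(2) is then formal. The middle row of \eqref{diag: circle wedge} identifies $\hat\iota_*\colon\pi_1(\iota^*\hat S)\to\pi_1(\hat S)$ as the quotient by $\ker\hat\iota_*=\langle\langle\prod_i[\alpha_i,\beta_i]\rangle\rangle$, so by the universal property of quotient groups a homomorphism $\tilde\Theta$ factors through a homomorphism $\Theta\colon\pi_1(\hat S)\to G$ if and only if $\tilde\Theta$ vanishes on $\ker\hat\iota_*$. Finally the top row of \eqref{diag: circle wedge} shows that $\pi_1(\hat F)$ includes compatibly into both $\pi_1(\iota^*\hat S)$ and $\pi_1(\hat S)$, so the descended $\Theta$ still restricts to $\theta$ on $\pi_1(\hat F)$ and is therefore the sought extension; I anticipate no difficulty here beyond reading off the diagram.
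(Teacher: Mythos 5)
Your argument is correct and follows the paper's proof essentially verbatim: both exploit the freeness of $\pi_1(\bigvee_j S^1)$ to split the middle column of \eqref{diag: circle wedge}, identify condition \eqref{eq: monodromy invariant general} with the requirement that $xy\mapsto\theta(x)\theta'(y)$ respect the semidirect-product multiplication, and dispose of part~(2) by the universal property of the quotient. Your added remark contrasting this genuine splitting with the non-trivial intersection $\langle\gamma_0\rangle$ in the graph case is a correct (and clarifying) elaboration, not a departure.
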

\begin{proof}
 Note that the quotient of $\pi_1(\iota^*\hat S)$ by $\pi_1(\hat F)$ is a free group, so any choice of lift of the generators splits the middle column of \eqref{diag: circle wedge}. Hence any choice of homomorphism $\pi_1(\bigvee_j S^1)\to G$ lifts to a map of sets $\pi_1(\iota^*\hat S)\to G$. From the definition of the multiplication in the semi-direct product we read of that the second condition is equivalent to this map of sets being a group homomorphism. 
 
 This group-homomorphism descends to a homomorphism $\Theta\colon \pi_1(\hat S) \to G$ if and only if it is trivial on the kernel.
\end{proof}

Before we state the specialisation of this result for $G$ abelian we make some preliminary considerations: write $D= \sum_{i=1}^m D_i$ as a sum of components and fix for each $i$ a small loop in $F$ around one of the points in $F\cap D_i$. Assume that $G$ is an abelian group, written additively.  Then we define the \emph{global extension obstruction}  of $\theta$ as
\[  o(\theta) = \sum_i \left(\deg\left( D_i \to F\right)\right)\theta(\gamma_i).\]
Note that since $\theta$ satisfies the compatibility condition of Definition \ref{defin: virtual Kodaira} and $G$ is abelian the element $o(\theta)\in G$ does not depend on the choices made.
\begin{cor}\label{cor: tautological general abelian}
Assume that in the situation of Proposition \ref{prop: tautological general} $G$ is abelian. Then $\theta$ is the restriction of a homomorphism  $\tilde \Theta\colon \pi_1(\iota^*\hat S)\to G$ if and only if $\theta$ is invariant under the monodromy action of $\pi_1(B) $ on $\Hom(\pi_1(\hat F), G)$.

There exists such an extension $\tilde \Theta$ which is the restriction of a homomorphism $\Theta\colon \pi_1(\hat S) \to G$ if and only if in addition the global extension obstruction  $o(\theta) = 0$ in $G$.
\end{cor}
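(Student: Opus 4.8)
The plan is to specialise Proposition \ref{prop: tautological general} to abelian $G$, where the cocycle-type condition \eqref{eq: monodromy invariant general} collapses to monodromy invariance and the descent obstruction reduces to a single homology computation for the surface relator.

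First I would treat the existence of $\tilde\Theta$. Writing $G$ additively, the condition $\theta(x)=\theta'(y^{-1})\theta(yxy^{-1})\theta'(y)$ of Proposition \ref{prop: tautological general}(1) has its two $\theta'$-terms cancel, so it becomes $\theta(yxy^{-1})=\theta(x)$ for all $x\in\pi_1(\hat F)$ and all $y$ in the free group $\langle\tilde\alpha_i,\tilde\beta_i\rangle$. Since conjugation by $\tilde\alpha_i,\tilde\beta_i$ realises the monodromy automorphisms of $\pi_1(\hat F)$ attached to the generators $\alpha_i,\beta_i$ of $\pi_1(B)$, this is exactly monodromy invariance of $\theta$. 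Crucially, once it holds the equation no longer mentions $\theta'$, and because $\langle\tilde\alpha_i,\tilde\beta_i\rangle$ is free a homomorphism $\theta'$ (for instance the trivial one) always exists. Hence $\tilde\Theta$ exists if and only if $\theta$ is monodromy invariant, which is the first assertion.

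Next I would reduce the descent to a single relator. Assuming $\tilde\Theta$ exists, Proposition \ref{prop: tautological general}(2) says it descends to $\Theta\colon\pi_1(\hat S)\to G$ iff $\tilde\Theta$ vanishes on $\ker\hat\iota_*$. By diagram \eqref{diag: circle wedge}, $\ker\hat\iota_*$ is carried isomorphically onto the normal closure $\langle\langle\prod_i[\alpha_i,\beta_i]\rangle\rangle$, so it is itself the normal closure of one lift $z$ of the surface relator; concretely $z=\prod_i[\tilde\alpha_i,\tilde\beta_i]\cdot\tilde w^{-1}$, where $w:=\prod_i[\hat\alpha_i,\hat\beta_i]$ lies in $\pi_1(\hat F)\subset\pi_1(\hat S)$ because $\prod_i[\alpha_i,\beta_i]=1$ in $\pi_1(B)$, and $\tilde w$ is its copy in the fibre group inside $\pi_1(\iota^*\hat S)$. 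Since $G$ is abelian, vanishing on the normal closure is equivalent to $\tilde\Theta(z)=0$; commutators die, so $\tilde\Theta(z)=-\theta(w)$. Thus descent holds iff $\theta(w)=0$, and it remains to identify $\theta(w)$ with $o(\theta)$.

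For that identification I would choose the lifts to be horizontal, $\hat\alpha_i=\{x_0\}\times\alpha_i$ and $\hat\beta_i=\{x_0\}\times\beta_i$ inside the section $\Sigma=\{x_0\}\times B$ with $x_0\in F\setminus D$. Then $w$ is nullhomotopic in $\Sigma\cong B$ and bounds the $2$-chain $\Sigma$, so in $\hat S=S\setminus D$ its class is the linking of $\Sigma$ with $D$, giving $[w]=\sum_i(\Sigma\cdot D_i)\gamma_i$ by the Thom isomorphism; and $\Sigma$ meets $D_i$ exactly in the points of $D_i$ over $x_0$, transversally since $D_i\to F$ is étale, so $\Sigma\cdot D_i=\deg(D_i\to F)$. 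As $\theta$ factors through $H_1(\hat F)$ and, by the compatibility condition of Definition \ref{defin: virtual Kodaira}, takes one value $\theta(\gamma_i)$ on all meridians of $D_i$, we get $\theta(w)=\sum_i\deg(D_i\to F)\,\theta(\gamma_i)=o(\theta)$. The hard part is precisely this last step: one must verify that $[w]$ carries no genus-of-$F$ contribution (which follows because $w$ projects trivially to $\pi_1(F)$, landing in the meridian subgroup), and then match the fibrewise meridian content of $[w]\in H_1(\hat F)$ with the ambient intersection numbers in $\hat S$ component by component. Monodromy invariance, established in the first step, is exactly what makes $\theta(w)$ depend only on this meridian content and independent of the chosen lifts.
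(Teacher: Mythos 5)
Your overall architecture coincides with the paper's: for the first assertion both arguments observe that for abelian $G$ the two $\theta'$-terms in \eqref{eq: monodromy invariant general} cancel, leaving exactly monodromy invariance of $\theta$, after which $\theta'$ exists for free on the free group of lifts; for the second assertion both reduce, using that $G$ is abelian, to evaluating $\tilde \Theta$ on a single normal generator of $\ker\hat\iota_*$ lifting the surface relator, on which the commutator part dies and only a class $w\in\pi_1(\hat F)$ survives. Up to that point your proof is correct and essentially identical to the paper's.

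The gap is in the final identification $\theta(w)=o(\theta)$. You compute the class of $w$ by capping with the section $\Sigma=\{x_0\}\times B$ and reading off intersection numbers with the $D_i$, i.e.\ you compute in $H_1(\hat S)$. But there the coefficients of the meridians $\gamma_i$ are only well defined modulo the image of $H_2(S)\to H_2(S,\hat S)\isom\bigoplus_i\IZ$, since two $2$-chains bounding $w$ differ by a $2$-cycle; this ambiguity subgroup contains the vector $\bigl(\deg(D_i\to F)\bigr)_i$ itself (take the cycle $[\Sigma]$), so after applying $\theta$ the answer is pinned down only modulo the subgroup of $G$ generated by $o(\theta)$ --- precisely the element whose vanishing is at stake. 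A computation carried out in the ambient $H_1(\hat S)$ therefore cannot close the argument, and monodromy invariance is not the ingredient that repairs it. What is actually needed is: (a) writing $w=\bigl(\prod_j\delta_j\bigr)^{-1}$ via the punctured-surface relation in $\pi_1(B_0\setminus D)$, each horizontal meridian $\delta_j$ around a point of $\Sigma\cap D_i$ is conjugate in $\pi_1(\hat S)$ to a fibre meridian $\gamma_{ij}$ of the \emph{same} component $D_i$ (slide it along the connected curve $D_i$ inside a punctured tubular neighbourhood), so that, rewritten inside the normal subgroup $\pi_1(\hat F)$, the class of $w$ is a signed sum of meridians with exactly $\deg(D_i\to F)$ of them belonging to $D_i$; and (b) the compatibility condition of Definition \ref{defin: virtual Kodaira}, which gives $\theta(\gamma_{ij})=g_i$ independently of $j$. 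This conjugacy argument in a tubular neighbourhood of $D$, carried out in the subspace $Z=B_0\cup T\cup F_0\setminus D$, is exactly how the paper's proof proceeds; your ``component by component'' remark points in the right direction, but attributing the resolution to monodromy invariance leaves the decisive step unproved.
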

It is not hard to deduce the condition for $G$ abelian stated in Proposition \ref{prop: tautological graph} from this result.
\begin{proof}
The conjugation action of two different lifts say $\alpha_i$ differs by an inner automorphism of $\pi_1(\hat F)$. Thus all lifts act in the same way on $\theta\in \Hom(\pi_1(\hat F), G)$, because $G$ is abelian and \eqref{eq: monodromy invariant general} just means that $\theta$ is monodromy invariant.

To conclude we have to show that, modulo the commutator,  the kernel of $\hat \iota_*$ is generated by the obstruction element $o(\theta)$. To see this, we choose in $F\times B$ a 
horizontal section,  $B_0$ isomorphic to $B$ and a small tubular neighbourhood $T$ of $D$ and consider the subspace $Z=B_0\cup T\cup F_0\setminus D$ 
where $F_0$ is the fibre over the base-point in $B$. Denote the lifts of the generators of $\pi_1(B)$ given by $B_0\cap \iota^*\hat S$ by $\tilde \alpha_i, \tilde \beta_i$ and let $\delta_j$ be horizontal loops around the punctures $B_0\cap D$. 

Note that in $\pi_1(B_0\setminus D)$, and thus in $\pi_1(Z)$ we have (modulo the commutator)  the relation $\prod_j\delta_j\prod_i[\tilde \alpha_i, \tilde\beta_i]=1$.
If the loop $\delta_j$ runs around a puncture in $B_0\cap D_i$ then $\delta_j$ is conjugate to $\gamma_i$, the  loop around $D_i$ contained in $F$ we chose above.
Thus modulo the commutator we have in $\pi_1(Z)$, and therefore in $\pi_1(\hat S)$ the relations
\[ \prod_j\delta_j = \prod_i\gamma_i^{\deg\left( D_i \to F\right)} \text{ and } \prod_i\gamma_i^{\deg\left( D_i \to F\right)} \prod_i[\tilde \alpha_i, \tilde\beta_i]=1.\]
Since $\ker \iota_*$ is normally generated by  $p_*\left(\prod_i\gamma_i^{\deg\left( D_i \to F\right)} \prod_i[\tilde \alpha_i, \tilde\beta_i]\right)$, the global extension obstruction $o(\theta)$ normally generates $\ker \hat \iota_*$ by \eqref{diag: circle wedge}.

Thus an extension $\tilde\Theta$ is the restriction of $\Theta \colon \pi_1(\hat S)\to G$ if and only if 
\begin{align*}
 0&=\tilde\Theta\left(\prod_i\gamma_i^{\deg\left( D_i \to F\right)} \prod_i[\tilde \alpha_i, \tilde\beta_i]\right) \\
 &= \tilde\Theta\left(\prod_i\gamma_i^{\deg\left( D_i \to F\right)}\right)\\
 &=\theta\left(\prod_i\gamma_i^{\deg\left( D_i \to F\right)}\right)\\
 & =o(\theta),
 \end{align*}
where the commutator maps to $0$ in $G$ because $G$ is abelian.
\end{proof}
\subsection{Minimal pullbacks}
\begin{cor}\label{cor: minimal pullback degree}
Let $\ka = (F\times B, D, \theta\colon \pi_1(\hat F)\to G)$ be as above with $G$ an abelian group. Then there exists an \'etale cover $g\colon \tilde B\to B$ such that the  the pullback $g^*\ka$ is realisable. The minimal degree of such a $g$ is the least common multiple of the order of $o(\theta)$ in $G$ and $[\pi_1(B):\Stab_\theta]$, where $\Stab_\theta$ is the stabiliser of $\theta$ under the action of $\pi_1(B)$ on $\Hom(\pi_1(\hat F), G)$.
\end{cor}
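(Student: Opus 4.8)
The plan is to combine the two obstructions identified in Corollary~\ref{cor: tautological general abelian} into a single numerical condition on the covering degree. Recall that $g^*\ka$ is realisable precisely when the pulled-back $\theta$ (which is unchanged as a homomorphism on the fibre, since $g$ is étale) is monodromy invariant \emph{and} has vanishing global extension obstruction on the new base $\tilde B$. So the first step is to understand how each of these two conditions transforms under an étale pullback $g\colon \tilde B\to B$ of degree $d$.

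For the monodromy condition, I would observe that pulling back along $g$ replaces the action of $\pi_1(B)$ by the action of the subgroup $g_*\pi_1(\tilde B)\leq \pi_1(B)$ of index $d$. Thus $\theta$ becomes monodromy invariant on $\tilde B$ exactly when $g_*\pi_1(\tilde B)\subseteq \Stab_\theta$. Choosing the étale cover corresponding to any subgroup contained in $\Stab_\theta$ works, and the minimal such degree achievable this way is $[\pi_1(B):\Stab_\theta]$, realised by the cover associated to $\Stab_\theta$ itself. For the obstruction condition, the key point is to track what happens to $o(\theta)$: after the degree-$d$ pullback, each component $D_i$ is replaced by its preimage, and the degree $\deg(D_i\to F)$ appearing in the formula for $o(\theta)$ gets multiplied by $d$ (the étale map $g$ multiplies fibre-counting degrees uniformly). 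Hence $o(g^*\theta) = d\cdot o(\theta)$ in $G$, which vanishes if and only if $d$ is a multiple of the order of $o(\theta)$ in $G$.

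Putting these together: a degree-$d$ étale cover $g$ makes $g^*\ka$ realisable iff $g$ simultaneously (a) has image contained in $\Stab_\theta$, forcing $d$ to be a multiple of $[\pi_1(B):\Stab_\theta]$, and (b) kills $o(\theta)$, forcing $d$ to be a multiple of $\ord(o(\theta))$. The minimal $d$ for which a cover meeting \emph{both} constraints exists is therefore the least common multiple of these two numbers. To finish, I would exhibit an explicit cover of that degree: take the subgroup $\Stab_\theta\leq\pi_1(B)$ and pass to a further subgroup of index $\ord(o(\theta))/\gcd(\ldots)$ inside it realising the extra cyclic factor, so that the total index equals the stated least common multiple.

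The main obstacle I expect is the subtle interaction in this final combination step. It is not automatic that one can meet both divisibility constraints \emph{with the same cover} at the least common multiple and not be forced higher: I must verify that inside $\Stab_\theta$ there exists a subgroup of exactly the index needed so that the obstruction order divides the resulting degree, which requires checking that $\pi_1(B)$ (a surface group, hence residually finite with abundant finite-index subgroups of every sufficiently divisible index) admits a subgroup realising precisely $\mathrm{lcm}(\ord(o(\theta)),[\pi_1(B):\Stab_\theta])$ while staying inside $\Stab_\theta$. The care here is to confirm that the two conditions are genuinely independent constraints on $d$ rather than one implying a stronger divisibility on the other, and that no lower degree can satisfy both.
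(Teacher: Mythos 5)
Your proposal is correct and follows essentially the same route as the paper: reduce to the two conditions of Corollary~\ref{cor: tautological general abelian}, note that monodromy invariance after pullback means the subgroup corresponding to $g$ lies in $\Stab_\theta$ while the obstruction becomes $\deg(g)\cdot o(\theta)$, and then realise the least common multiple by an explicit subgroup of $\Stab_\theta$ of the required index. The existence step you flag as the main obstacle is exactly what the paper dispatches by observing that the fundamental group of a curve of positive genus has quotients of every finite order, so $\Stab_\theta$ contains subgroups of any prescribed index.
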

\begin{proof}
 By Corollary \ref{cor: tautological general abelian} we need that the monodromy action fixes $\theta$ and that the global extension obstruction  vanishes for the $g^*\ka$. 
 Let $H$ be the subgroup of $\pi_1(B)$ corresponding to $g$. Then the first condition is satisfied if and only if $H\subset \Stab_\theta$ and the second condition is satisfied if and only if $\deg (g)\cdot o(\theta) = 0$ in $G$.  Since the fundamental group of a curve of positive genus has quotients of every finite order one can always find a subgroup of $\Stab_\theta$ that has exactly the required index. 
\end{proof}

\begin{rem}[ Non-Galois extensions]\label{rem: non-Galois realisation}
Note that realisability of a virtual Kodaira fibration entails by definition that the ramified cover $S \to F\times B$ is itself Galois with Galois group $G$. Alternatively, one could impose a weaker condition and look for an arbitrary  ramified cover $S\to F\times B$ such that the restriction to the fibre is the given Galois cover. Let us analyse quickly how this affects the minimal realisation degree. 

Assume $\ka = (F\times B, D, \theta\colon \pi_1(\hat F) \to G) $ is a virtual Kodaira fibration and that $g\colon \tilde B\to B$ is the minimal pullback such that $g^*\ka$ is realisable corresponding to a double Kodaira fibration $f\colon\tilde S\to F\times \tilde B$. Then possibly there is a finite group $H$ acting freely on $\tilde{S}$ and $\tilde B$ such that $\tilde{S}\to \tilde B$ is $H$-equivariant. If we divide out this action we get a diagram 
\[
\begin{tikzcd}
\tilde F\rar[hookrightarrow]\dar &  \tilde S \rar \dar{\text{\scriptsize Galois}}& \tilde S/H \dar{\text{\scriptsize non-Galois}}\\
F\rar[hookrightarrow]&  F\times \tilde B\dar \rar &  F\times \tilde B/H\dar \rar & F\times B\dar\\
 &\tilde B\rar & \tilde B/H\rar & B
\end{tikzcd},
\]
and thus a Kodaira fibration with smaller signature. 

We leave the exploration of this phenomenon for future research and only give an example of a non-Galois cover of a fibration that on each fibre restricts to a Galois cover.

Let $N\vartriangleleft S_3$ be the cyclic normal subgroup of order $3$ and $\IZ/2\isom A<S_3$ be one of the non-normal subgroups of order $2$. 
Consider a curve $\tilde F_7$ of genus $7$ with a free $S_3$-action and 
quotient $F_2 = F_7/S_3 $ of genus $2$ and quotient $F_3= F_7/N$ of genus $3$.
Let $B_3$ be a curve of genus $3$ with a quotient $B_2= B_3/A$ 
by a free automorphism of order 2.

Then we get the following diagram
\[
\begin{tikzcd}
 F_7\rar[hookrightarrow]\dar{/N} &   F_7\times B_3  \rar \dar{/N}[swap]{f}
& ( F_7\times B_3)/A \dar{\text{\scriptsize non-Galois}}[swap]{\bar f}\\
F_3\rar[hookrightarrow]&  F_3\times B_3\dar \rar & (F_3\times B_3)/A\dar\rar & F_2\times B_2\dar \\
 & B_3\rar & B_2\rar & B_2
\end{tikzcd},
\]
where the horizontal maps on the left are the inclusions of the fibres
over the base $B_3$ and the horizontal maps in the middle are the quotient
maps associated to actions of $A$, the free action on $B_3$ for the bottom
row, the diagonal action for the other rows.
Then the covering $\bar f$ in the third column is not Galois but it extends the Galois cover $F_7 \to F_3$ on every fibre. The point is that the fibre-wise Deck transformation does not glue to a global Deck transformation because it is not invariant under the monodromy action in the fibre bundle.
\end{rem}

\section{Explicit computation of monodromies}\label{sect: monodromy computation}
Let $D\subset F\times B$ a divisor in a product of curves. Considering the projections 
 \[\begin{tikzcd} F\times B \rar{q} \dar[swap]{p}& F \\ B\end{tikzcd}.\]
we assume $D\to B$ to be an unramified covering.
We fix a general point $b_0\in B$ and identify $F$ with the fibre of $p$ over $b_0$, so that we can write $F\cap D  = F\times \{b_0\} \cap D$ and $\hat F = F \setminus F\cap D$.

Corollary \ref{cor: minimal pullback degree} suggests we should compute the  monodromy action of $\pi_1(B)$ on homomorphisms $\theta\colon \pi_1(\hat F)\to G$. We only address this for $G$ a  (finite) abelian group, when $\theta$ factors uniquely over $H_1(\hat F;\IZ)$.

We start by computing the monodromy action on relative homology, for which we introduce the following construction: for a loop $\alpha$ in $B$ based in $b_0$ and $x \in D\cap F$ let $\tilde \alpha^x$ be the unique lift of $\alpha$ to $D$ starting in $x$, and consider the bilinear pairing defined by 
\begin{equation}\label{eq: weighted transfer}
H_1(B;\IZ)\times H_0(F\cap D; G) \to H_1(F, F\cap D; G), \qquad (\alpha, x) \mapsto \hat \alpha(x) := q_*\tilde \alpha^x, 
\end{equation}
where we identify $x\in F\cap D$ with its homology class.
We call this the weighted transfer pairing since (for $\IZ$-coefficients) $\hat\alpha(F\cap D) = q_*p^!\alpha$ where $p^!\colon H_1(B) \to H_1(B, b_0)\to H_1(D, D\cap F)$ is the transfer map.

\begin{thm}\label{thm: monodromy action}
 Let $G$ be an abelian group and $D\subset F\times B$ a divisor in a product of curves such that its projection to $B$ is \'etale and let 
 \[\chi(-)_*\colon \pi_1(B) \to \Aut(H_1(F, F\cap D;G))\]
 be the monodromy representation. Then the  action of $\alpha \in \pi_1(B)$ on an element $\theta\in H_1(F, F\cap D; G)$ is given by the weighted transfer pairing of the homology class of $\alpha$ with the boundary of $\theta$ as follows:
 \[\chi(\alpha)_*\theta = \theta +  \hat \alpha(\del\theta).\]
\end{thm}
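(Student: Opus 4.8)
The plan is to realise the monodromy automorphism $\chi(\alpha)_*$ by an explicit ambient isotopy of $F$ and then to compute its effect on a relative cycle by a prism (chain-homotopy) argument. The point is that, although the bundle of punctured curves $\hat\pi\colon \hat S\to B$ is in general non-trivial, the \emph{ambient} bundle $p\colon F\times B\to B$ is a product, so a loop in $B$ moves only the divisor $D$.

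First I would fix the product trivialisation of $p$ and write $\alpha\colon[0,1]\to B$ with $\alpha(0)=\alpha(1)=b_0$. Set $D_t = D\cap\bigl(F\times\{\alpha(t)\}\bigr)$. Since $D\to B$ is \'etale, each $x\in F\cap D = D_0$ lifts uniquely to a path $\tilde\alpha^x$ in $D$, whose projection $t\mapsto q(\tilde\alpha^x(t))$ is precisely a path representing $\hat\alpha(x)=q_*\tilde\alpha^x$. I would then choose an ambient isotopy $\phi_t\colon F\to F$ with $\phi_0=\id$ and $\phi_t(x)=q(\tilde\alpha^x(t))$ for all $x\in D_0$, extended arbitrarily over the rest of $F$. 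At $t=1$ the set $D_0$ returns to itself (its points permuted by the monodromy of $D\to B$), so $\phi_1$ restricts to a homeomorphism of $\hat F$ whose induced map on $H_1(F,F\cap D;G)$ represents $\chi(\alpha)_*$; the direction of $\alpha$ is chosen so that the sign in the final formula is positive.

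Next I would represent $\theta\in H_1(F,F\cap D;G)$ by a relative $1$-chain $c$ with $G$-coefficients whose boundary $\del c$ is supported on $F\cap D$ and represents $\del\theta\in H_0(F\cap D;G)$. Applying the prism operator $P$ associated to the isotopy $(\phi_t)$ yields a $2$-chain $P(c)$ lying in $F$, and the chain-homotopy identity $\del P + P\del = (\phi_1)_* - (\phi_0)_*$ gives
\[ \del P(c) = (\phi_1)_* c - c - P(\del c).\]
The essential identification is that $P(\del c)$ is the $1$-chain swept out by dragging the endpoints $\del\theta$ along the isotopy; by construction each endpoint $x$ travels along $t\mapsto\phi_t(x)=q(\tilde\alpha^x(t))$, so by bilinearity in the $G$-coefficients $P(\del c)=\hat\alpha(\del\theta)$, the weighted transfer pairing of $\alpha$ with $\del\theta$. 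Passing to $H_1(F,F\cap D;G)$, the term $\del P(c)$ vanishes because $P(c)$ is a genuine $2$-chain \emph{in $F$}, so that $(\phi_1)_* c - c$ differs from $\hat\alpha(\del\theta)$ by a relative boundary, and we obtain $\chi(\alpha)_*\theta = \theta + \hat\alpha(\del\theta)$.

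The main obstacle I expect is the first step: verifying that the monodromy homeomorphism really is the point-dragging map $\phi_1$, with the punctures travelling along the chosen representatives $q_*\tilde\alpha^x$ and not along some other path in their homotopy class, and fixing the orientation conventions so that the sign comes out as $+$ rather than $-$. Everything after that is a formal consequence of the chain-homotopy formula together with the observation that the prism $P(c)$ lives in the compact surface $F$; this is exactly what makes the action on the absolute group $H_1(F;G)$ trivial and isolates the contribution of the moving boundary as the weighted transfer term.
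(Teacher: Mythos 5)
Your proof is correct and follows essentially the same route as the paper: both arguments rest on the observation that the ambient bundle $F\times B\to B$ is trivial, so the monodromy is the point-pushing map dragging each puncture $x$ along $q_*\tilde\alpha^x$, and both compute the effect on a relative cycle by tracking the motion of its endpoints. The paper phrases this as a homotopy of the composed path $(\tilde\alpha^y)^{-1}\cdot\theta\cdot\tilde\alpha^x$ in $F\times I$ followed by projection to $F$, whereas you package the same computation as a prism/chain-homotopy identity; this is only a difference in bookkeeping.
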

\begin{proof}
Pull back the configuration $D\subset F\times B$ to the interval $I$ via the path $\alpha$. Then the pullback of $D$ is the union of the different liftings of $\alpha$ to $D$ and defines a braid in the surface $F$ as depicted in Figure \ref{fig: surface braid}.
\begin{figure}[h]\caption{Surface braid induced by $\alpha\in \pi_1(B)$}\label{fig: surface braid}
\begin{center}
\begin{tikzpicture}[
fibre/.style = {}, 
horizontal/.style = {thin, dashed},
lift1/.style = {red, thick},
lift2/.style = {blue, thick},
cycle/.style = {thick},
points/.style = {black}
]
\coordinate (x0) at (0,-.5);
\coordinate (y0) at (0,.-1.5);
\coordinate (x1) at (6,-.5);
\coordinate (y1) at (6,.-1.5);
\coordinate (x2) at (10,-.5);
\coordinate (y2) at (10,.-1.5);

\begin{scope}[fibre]
\draw (0,0) ellipse [x radius=1cm,y radius=2cm];
\draw[bend right] (0,0) to ++(0,1) to ++(0,-1);
\draw (6,0) ellipse [x radius=1cm,y radius=2cm];
\draw[bend right] (6,0) to ++(0,1) to ++(0,-1);
\draw (10,0) ellipse [x radius=1cm,y radius=2cm];
\draw[bend right] (10,0) to ++(0,1) to ++(0,-1);
\end{scope}
\foreach \c in { 0, 1} {\draw[white,line  width = .2cm] (0.5,\c) -- (5.5, \c);};
\begin{scope}[horizontal]
\foreach \c in {-2, 0, 1, 2} {\draw (0,\c) -- (6, \c);};
\end{scope}

\begin{scope}[lift1]
 \draw[white, line width = .2cm] (x0) to[out=0, in = -120] (1.5, 0);
 \draw[white, line width = .2cm]  (2.4,1.1) to [out=30, in = 180] (y1);
 \draw[postaction = {mid arrow}] (x0) to[out=0, in = -120] (1.5, 0);
\draw[dashed] (1.5, 0) to[out =60, in =-150] (2.5,1);
\draw[postaction={decorate,decoration={markings,mark=at position .4 with {\arrow{stealth}}}}] (2.5, 1) to [out=30, in = 180] (y1);

\node at (3,1.5) {$\tilde\alpha^x$};

\draw[postaction={decorate,decoration={markings,mark=at position 0.42 with {\arrow{stealth}}}}] (x2) to [out = 120, in = 180] (10, 1.5)  to[out = 0, in =70]    (y2);
\node at (11.5,1) {$q_*\tilde\alpha^x$};

\end{scope}

\begin{scope}[lift2]
\draw[white, line width =.2cm] (y0) to[out = 0, in = 180] (x1);
\draw[postaction = {mid arrow}] (y0) to[out = 0, in = 180] node[below] {$\tilde \alpha^y$} (x1);

\draw[postaction = {mid arrow}] (y2) to[out = 10, in = -10, looseness = 2] (x2);
\node at (11.5,-1) {$q_*\tilde\alpha^y$}; 
\end{scope}

\begin{scope}[cycle]
\draw[bend left, postaction = {mid arrow}] (y0) to node[left] {$\theta$} (x0);
\draw[bend left, postaction = {mid arrow}] (y2) to node[left] {$\theta$} (x2); 
\end{scope}

\draw[->, thick] (7.5,0) -- ++(1,0);
\foreach \P in { x0,y0,x1,y1, x2, y2} {\fill[points] (\P) circle (2pt);};
\path (x0) node [left] {$x$} (y0) node[left] {$y$};

\node at (3, -2.5) {$F\times I$};
\node at (10, -2.5) {$F$};
\end{tikzpicture}
\end{center}
 \end{figure}
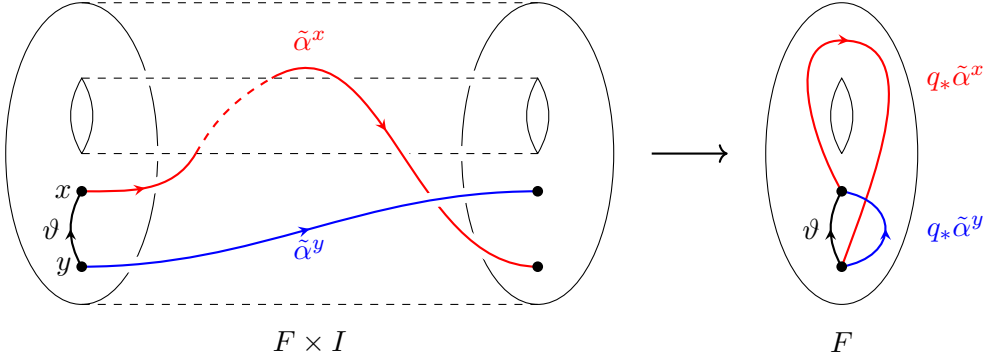
We only need to determine the action on generators of $H_1(F, F\cap D;G)$ that are not in $H_1(F;G)$, so it is sufficient to compute the action of $\alpha$ on the class of a path $\theta$ connecting two points in $F\cap D$. Write $\del \theta = x-y$. Then the composition of $\inverse{({\tilde\alpha}^y)}$, $\theta$, and $\tilde\alpha^x$, considered as paths in $F\times I$, is homotopy equivalent to $\chi(\alpha)_*\theta$,  relative to the endpoints. Since the projection to $F$ is a homotopy equivalence we get the claimed formula.
\end{proof}

\begin{rem}
In Theorem \ref{thm: monodromy action}, we have seen the monodromy action of $\alpha\in\pi_1(B)$ on an element $\theta\in H_1(F,F\cap D)$. In this remark, we'll explain the monodromy action in more detail from the point of view of braid groups (see \cite{birman2016braids}).

Let $D\subset B\times F$ be a smooth divisor which is unramified over $B$. Then the pair $(B\times F,D)\xrightarrow{p}B$ is a locally trivial fibration with the fibre $(F,F\cap D)$, the surface $F$ with $d$ distinguished points $\{x_0,\cdots,x_{d-1}\}$. 
Its monodromy homomorphism $\chi\colon\pi_1(B)\to \mathrm{Mod}(F,F\cap D)$, which maps to the marked mapping class group, takes values in the subgroup $\mathrm{Br}_d(F)$, the $d$-stranded surface braid group, because $B\times F\to B$ is a trivial bundle. This follows from the generalised Birman exact sequence 
\[1\to \mathrm{Br}_d(F)\xrightarrow{\text{push}} \mathrm{Mod}(F,d)\xrightarrow{\text{forget}} \mathrm{Mod}(F)\to 1.\] Therefore, the monodromy action of the marked mapping class $\chi(\alpha)\in \mathrm{Mod}(F,F\cap D)$ on the relative homology group $H_1(F,F\cap D)$ is determined by the action of the image under the push map of the surface braid $\beta(\alpha)\in \mathrm{Br}_d(F)$, where $\beta\colon\pi_1(B)\to\pi_1(C(F,d))=\mathrm{Br}_d(F)$ is given on representatives by  $\alpha(t)\mapsto q_*(p^{-1}(\alpha(t))\cap D)$.
On the other hand, we have an exact sequence 
\[1\to H_1(F)\to H_1(F,F\cap D)\xrightarrow{\partial} H_0(F\cap D)\to H_0(F)\to 1.\] 
Hence $H_1(F,F\cap D)$ is isomorphic to a direct sum of a natural subgroup $H_1(F)$ and a (non canonical) complement $H$ isomorphic to $\im\partial$. We can take $H$ as the subgroup generated by the paths $\delta_i$ in a fixed disk connecting two marked points $x_0$ and $x_i$. 
With respect to a basis subordinate to this decomposition, the induced action on the relative homology is given by
\[\chi(\alpha)_*=\left[\begin{array}{c|c}
\id & \psi(\alpha)\\
\hline
0 & \sigma(\alpha)
\end{array}\right]\]
since the action is trivial on $H_1(F)$.
In order to look at the action on $H$, we write the surface braid $\beta(\alpha)$ as a product of a pure surface braid $\bar\beta(\alpha)$ and a braid $\pi_{\beta}(\alpha)$ supported on the disc used above for the definition of $H$. 
The braid $\pi_\beta(\alpha)$ contributes as a matrix $\sigma(\alpha)$ easily obtained from
the permutation action of the braid on the points $x_i$
 and $\bar\beta(\alpha)$ contributes as a matrix $\psi(\alpha)$ where the $i$-th column gives the difference in homology $H_1(F)$ of the loops traced by $x_i$ and $x_0$ on the surface $F$.

To verify this, 
first write $\bar{\beta}(\alpha)=(\bar{\beta}_0(\alpha),\bar{\beta}_1(\alpha),\cdots,\bar{\beta}_{d-1}(\alpha))$ so that the $i$-th strand $\bar{\beta}_i(\alpha)$ is the trace of $x_i$.
Then the push map along each strand of the pure braid acts trivially on the subgroup $L$ of $H_1(\hat{F})$ generated by $\gamma_i$'s, and
for each $\beta\in H_1(F)\subset H_1(\hat{F})$,
\[\mathrm{Push}(\bar{\beta}_i(\alpha))_*(\beta)=\beta+\langle\bar{\beta}_i(\alpha),\beta\rangle\gamma_i.\]
Hence, for any $\theta\in H_1(F,F\cap D;G)$, \[\mathrm{Push}(\bar{\beta}_i(\alpha))_*\theta=\theta+[(\bar{\beta}_i(\alpha)]\otimes\theta(\gamma_i)\] by Poincare duality. Therefore, by \cite{goldberg74},
\[\mathrm{Push}(\bar{\beta}(\alpha))_*\theta-\theta=\sum_{i=0}^{d-1}[\bar{\beta}_i(\alpha)]\otimes\theta(\gamma_i)=\sum_{i=1}^{d-1}=([\bar{\beta}_i(\alpha)]-[\bar{\beta}_0(\alpha)])\otimes\theta(\gamma_i).\]

\end{rem}

Let us set up the notations for the rest of the section. For later use it is convenient to decompose $D = \bigsqcup_i D_i$ into connected components 
 and consider the \'etale maps $p_i = p|_{D_i}\colon D_i \to B$.  Then we write $F\cap D_i = \{ x_{ij}\}_j$ so that $D\cap F = \{x_{ij}\}_{i,j}$. We choose sufficiently small loops $\gamma_{ij}$ around $x_{ij}$,  positively oriented. For convenience choose also a symplectic basis $\alpha_1, \beta_1, \dots, \alpha_f,  \beta_f$ for $H_1(F, \IZ)$.
 With these choices we have $H_1(\hat F;\IZ) = \langle \alpha_k, \beta_k, \gamma_{ij}\rangle\slash \langle \sum \gamma_{ij}\rangle$. 
 
Now let $\theta \colon \pi_1(\hat F) \to  H_1(\hat F; \IZ)\to G$ be a homomorphism 
 to a (finite) abelian group satisfying the conditions of Definition \ref{defin: virtual Kodaira}, that is, $\theta(\gamma_{ij}) = g_i$ is independent of $j$. 

 By relative Alexander duality \cite[p.234, Step~3]{GH} capping with the orientation class gives a commutative diagram
 \begin{equation}\label{eq: Alexander duality}
  \begin{tikzcd}
   H^0(F\cap D; \IZ) \rar{\delta} \dar{\isom} & H^1(F, F\cap D;\IZ) \rar\dar{\isom}[swap]{\cap \zeta_F}  & H^1(F; \IZ)\dar{\isom}\\
   H_2(F, \hat F;\IZ) \rar & H_1(\hat F;\IZ) \rar & H_1(F;\IZ)
  \end{tikzcd},
 \end{equation}
and denoting by $\check x_{ij}$ the dual of $x_{ij}$, considered as a class in $H^0(F\cap D;\IZ)$, we have $\delta(\check x_{ij})\cap \zeta_F= \gamma_{ij}$.

Note that the universal coefficient theorem provided an identification of the cohomology group $H^1(F, F\cap D;\IZ) $ with $ \Hom (H_1(F, F\cap D; \IZ), \IZ)$ and thus
\[ \Hom (H^1(F, F\cap D;\IZ), G) \isom H_1(F, F\cap D; G) \isom H_1(F, F\cap D;\IZ)\tensor G.\]
Combining this with \eqref{eq: Alexander duality} we get a dual commutative diagram 
 \begin{equation}\label{eq: dual Alexander duality}
  \begin{tikzcd}
   H_1(F;G ) \rar \dar{\isom} & H_1(F, F\cap D;G) \rar{\del}\dar{\isom}[swap]{}  & H_0(F\cap D; G)\dar{\isom}\\
   H^1(F;G) \rar & \Hom (H_1(\hat F;\IZ), G) \rar & H^2(F, \hat F;G)
  \end{tikzcd}.
 \end{equation}
Following the isomorphisms gives immediately
\begin{lem}\label{lem: del theta}
 Considering $\theta\colon \pi_1(F)\to G $ as an element of $H_1(F, F\cap D; G)$ we have
 \[\del \theta  = \sum_{ij} x_{ij}\tensor\theta(\gamma_{ij}) = \sum_i \left(\textstyle\sum_j x_{ij}\right) \tensor g_i.\] 
\end{lem}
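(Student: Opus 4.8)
The plan is to compute $\del\theta$ by chasing $\theta$ through the two duality diagrams \eqref{eq: Alexander duality} and \eqref{eq: dual Alexander duality}, feeding in the one explicit fact recorded after \eqref{eq: Alexander duality}, namely $\delta(\check x_{ij})\cap\zeta_F=\gamma_{ij}$. Since $G$ is abelian, $\theta$ factors through $H_1(\hat F;\IZ)$ and so defines an element of $\Hom(H_1(\hat F;\IZ),G)$; via the middle vertical isomorphism of \eqref{eq: dual Alexander duality} I regard $\theta$ as a class in $H_1(F,F\cap D;G)$, and it is its boundary $\del\theta\in H_0(F\cap D;G)$ that I must identify. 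By commutativity of the right-hand square of \eqref{eq: dual Alexander duality}, this amounts to computing the image of $\theta$ under the lower horizontal map $\Hom(H_1(\hat F;\IZ),G)\to H^2(F,\hat F;G)$ and transporting the answer back along the right vertical isomorphism $H^2(F,\hat F;G)\isom H_0(F\cap D;G)$.

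First I would identify that lower horizontal map. The bottom row of \eqref{eq: dual Alexander duality} is obtained from the bottom row of \eqref{eq: Alexander duality} by applying $\Hom(-,G)$, together with the universal-coefficient identifications $H^1(F;G)\isom\Hom(H_1(F;\IZ),G)$ and $H^2(F,\hat F;G)\isom\Hom(H_2(F,\hat F;\IZ),G)$. Hence the map in question is precomposition with the connecting homomorphism $\del\colon H_2(F,\hat F;\IZ)\to H_1(\hat F;\IZ)$, and the image of $\theta$ is simply $\theta\circ\del$.

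It then remains to evaluate $\theta\circ\del$ on the preferred generators and read off the result. Under the left vertical isomorphism of \eqref{eq: Alexander duality} the dual point classes $\check x_{ij}\in H^0(F\cap D;\IZ)$ correspond to a basis of $H_2(F,\hat F;\IZ)$, and commutativity of the left square of \eqref{eq: Alexander duality} turns the identity $\delta(\check x_{ij})\cap\zeta_F=\gamma_{ij}$ into the statement that $\del$ carries this basis element to $\gamma_{ij}\in H_1(\hat F;\IZ)$. Therefore $(\theta\circ\del)(\check x_{ij})=\theta(\gamma_{ij})$, so under $H^2(F,\hat F;G)\isom\Hom(H_2(F,\hat F;\IZ),G)$ the image of $\theta$ is the functional sending $\check x_{ij}$ to $\theta(\gamma_{ij})$; transporting this back along the right vertical isomorphism gives exactly $\sum_{ij}x_{ij}\tensor\theta(\gamma_{ij})$, which is $\del\theta$. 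The second displayed equality is then immediate from $\theta(\gamma_{ij})=g_i$. The step I expect to be the main obstacle is not the evaluation but the bookkeeping of the four isomorphisms: one has to verify that the $G$-coefficient boundary $\del$ really is the $\Hom(-,G)$-dual of the integral connecting map of \eqref{eq: Alexander duality} and that the orientation conventions match, so that the single identity $\delta(\check x_{ij})\cap\zeta_F=\gamma_{ij}$ propagates consistently through the whole diagram.
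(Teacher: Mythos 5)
Your argument is correct and follows exactly the route the paper intends: the paper's entire proof is the phrase ``Following the isomorphisms gives immediately'' preceding the lemma, and your write-up simply makes that diagram chase through \eqref{eq: Alexander duality} and \eqref{eq: dual Alexander duality} explicit, using the key identity $\delta(\check x_{ij})\cap\zeta_F=\gamma_{ij}$ in the same way. No discrepancy to report.
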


We can now make the computation of the stabiliser explicit.
\begin{cor}\label{cor: monodromy action graph type}
In the above notation the stabiliser of $\theta$ under the monodromy action is the kernel of the homomorphism $\iota\colon \pi_1(B) \to H_1(F;G)\subset  H_1(F, F\cap D;G)$, defined by 
\[\iota(\alpha) = \hat \alpha\left( \sum_{ij} x_{ij}\tensor\theta(\gamma_{ij})\right) = q_*\sum_i p_i^!\alpha \tensor g_i,\]
and consequently the index $[\pi_1(B)\colon \Stab_{\theta}]$ equals the cardinality of the groups $\im \iota\isom H_1(B;\mathbb{Z})/ker(\iota)$.

In particular, if $D = \bigsqcup_i \Gamma_{\phi_i}$ is a disjoint union of graphs then 
$\iota(\alpha) = \sum_i {\phi_i}_*\alpha \tensor g_i$.
\end{cor}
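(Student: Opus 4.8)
The plan is to read the statement off directly from Theorem \ref{thm: monodromy action} and Lemma \ref{lem: del theta}, so that the only genuine work is to verify that the resulting map $\iota$ is a homomorphism whose image lies in absolute homology. First I would recall that by Theorem \ref{thm: monodromy action} the action of $\alpha\in\pi_1(B)$ is the translation $\chi(\alpha)_*\theta = \theta + \hat\alpha(\del\theta)$, so $\alpha$ fixes $\theta$ precisely when $\hat\alpha(\del\theta) = 0$. Substituting $\del\theta = \sum_{ij} x_{ij}\tensor\theta(\gamma_{ij})$ from Lemma \ref{lem: del theta} and setting $\iota(\alpha) := \hat\alpha(\del\theta)$, the identity $\Stab_\theta = \ker\iota$ becomes a tautology as sets, and acquires its meaning as soon as $\iota$ is shown to be a homomorphism.

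Second I would identify $\iota$ with the transfer expression. By the very definition of the weighted transfer pairing \eqref{eq: weighted transfer}, $\hat\alpha(\sum_j x_{ij})$ is the image under $q_*$ of the sum of all lifts of $\alpha$ to the component $D_i$, which is exactly $q_* p_i^!\alpha$; using the compatibility condition $\theta(\gamma_{ij}) = g_i$ (independent of $j$) from Definition \ref{defin: virtual Kodaira} to pull $g_i$ out of the sum over $j$, this yields $\iota(\alpha) = q_*\sum_i p_i^!\alpha\tensor g_i$. Since the transfer $p_i^!\colon H_1(B)\to H_1(D_i, D_i\cap F)$, the map $q_*$, and tensoring with $g_i$ are all homomorphisms, $\iota$ is a homomorphism; being assembled from the homology of $B$ it factors through $H_1(B;\IZ)$.

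The step I expect to carry the actual content is showing $\im\iota\subseteq H_1(F;G)$, that is $\del\iota(\alpha) = 0$. Here I would compute the boundary component by component: the boundary of the assembled lifts of $\alpha$ over $D_i$ is $\sum_j(\sigma_\alpha(x_{ij}) - x_{ij})\tensor g_i$, where $\sigma_\alpha$ denotes the monodromy permutation of $\alpha$. Because $D_i\to B$ is a \emph{connected} \'etale cover, $\sigma_\alpha$ preserves $D_i$ and merely permutes its finite fibre $\{x_{ij}\}_j$; as the coefficient $g_i$ is constant along the component, this telescoping sum cancels. This is precisely the place where connectedness of the components and the constancy of $\theta$ on each $D_i$ enter, and it is what places $\iota$ in the absolute homology $H_1(F;G)$ as claimed (consistently with the monodromy acting trivially on $H_1(F;G)$).

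Finally I would conclude. As $\iota$ factors through $H_1(B;\IZ) = \pi_1(B)^{\mathrm{ab}}$ and $\pi_1(B)\to H_1(B;\IZ)$ is surjective, the first isomorphism theorem gives $\im\iota\isom H_1(B;\IZ)/\ker(\iota)$ and hence $[\pi_1(B)\colon\Stab_\theta] = |\im\iota|$. For the graph case $D = \bigsqcup_i\Gamma_{\phi_i}$ each projection $p_i\colon\Gamma_{\phi_i}\to B$ is an isomorphism, so $p_i^!\alpha$ is simply the lift of $\alpha$ along this isomorphism and $q|_{\Gamma_{\phi_i}}$ is identified with $\phi_i$; thus $q_* p_i^!\alpha = \phi_{i*}\alpha$ and $\iota(\alpha) = \sum_i\phi_{i*}\alpha\tensor g_i$.
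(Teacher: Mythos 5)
Your proposal is correct and follows essentially the same route as the paper: both reduce the stabiliser computation to Theorem \ref{thm: monodromy action} and Lemma \ref{lem: del theta}, and both establish $\im\iota\subseteq H_1(F;G)$ by observing that the monodromy merely permutes the fibre $D_i\cap F$ within each connected component while $\theta(\gamma_{ij})=g_i$ is constant there (the paper phrases this as $\del\circ\iota(\alpha)=\chi(\alpha)_*(\del\theta)-\del\theta=0$, you compute the telescoping boundary directly, but it is the same cancellation).
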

\begin{proof} As above we consider $\theta$ as an element of $H_1(F, F\cap D;G)$. 
 By Theorem \ref{thm: monodromy action} $\alpha \in \Stab_\theta$ if and only if $\theta = \chi(\alpha)_*\theta = \theta+\hat\alpha(\del \theta)$, that is, if and only if $\iota(\alpha) =0$, where we have used the description of $\del\theta$ from Lemma \ref{lem: del theta}. If follows from the definition \eqref{eq: weighted transfer} that $\iota$ is a homomorphism. 
 
 It remains to show that the image of $\iota$ is contained in $H_1(F;G)$, or equivalently from \eqref{eq: dual Alexander duality} that for all $\alpha$ \[0=\del\circ \iota(\alpha)= \del\left(\chi(\alpha)_*\theta-\theta\right) = \chi(\alpha_*)(\del \theta) - \del\theta.\]
 But the monodromy action on $\del \theta$ is indeed trivial, because it only permutes the intersection points of $D_i\cap F$ and $\theta(\gamma_{ij})$ does not depend on $j$ by assumption. 
\end{proof}

\section{Virtual Kodaira fibrations with small virtual signature}\label{sect: virtual with small signature}
In this section we delve into the intricacies of constructing and classifying virtual Kodaira fibrations of small virtual signature. The upshot is that while giving some numerical restrictions and constructing some examples is easy, working out a complete list turns out to be a larger endeavour.

\subsection{Numerical restrictions for graph type}
Since the signature of a realisable virtual Kodaira fibration is always divisible by $4$ we restrict to this case in the numerical classification. 
\begin{prop}\label{prop: numerical graph type}
  Let $\ka= (F\times B, D, \theta\colon \pi_1(\hat F)\to G)$ be a virtual Kodaira fibration of graph type and let $m=D.F$ be the number of punctures. If the virtual signature $\sigma(\ka)$ is at most $16$ and divisible by $4$ then we are in one of the  cases listed in Table \ref{tab: virt Kodaira graph}.

  \begin{table}\caption{Virtual Kodaira fibrations of graph type with small signature}\label{tab: virt Kodaira graph}
  \begin{tabular}{rrrrc}
  \toprule
  $\sigma(\ka)$ & $g(B)$ & $|G|$ & $r =(r_1, \dots, r_m)$ & group $G$\\
  \midrule



 
$4$ & $2$ & $8$ & $( 2 )$ & non-abelian \\    
 $4$ & $3$ & $2$ & $( 2, 2 )$ \\   
$4$ & $2$ & $4$ & $( 2, 2 )$ \\   
\midrule
 $8$ & $3$ & $8$ & $( 2 )$ & non-abelian\\   
 $8$ & $2$ & $16$ & $( 2 )$ & non-abelian\\ 
 $8$ & $5$ & $2$ & $( 2, 2 )$ \\   
 $8$ & $3$ & $4$ & $( 2, 2 )$ \\   
 $8$ & $2$ & $8$ & $( 2, 2 )$ \\   
 $8$ & $3$ & $2$ & $( 2, 2, 2, 2 )$ \\   
\midrule
 $12$ & $4$ & $8$ & $( 2 )$ & non-abelian \\   
 $12$ & $3$ & $12$ & $( 2 )$ & non-abelian\\   
$12$ & $2$ & $24$ & $( 2 )$ & non-abelian\\
 $12$ & $7$ & $2$ & $( 2, 2 )$ \\   
 $12$ & $4$ & $4$ & $( 2, 2 )$ \\  
 $12$ & $3$ & $6$ & $( 2, 2 )$ \\   
 $12$ & $2$ & $12$ & $( 2, 2 )$ \\   
 $12$ & $3$ & $4$ & $( 2, 2, 2 )$ &$\IZ/2\times \IZ/2$ \\   
 $12$ & $2$ & $8$ & $( 2, 2, 2 )$ &non-cyclic\\   
$12$ & $4$ & $2$ & $( 2, 2, 2, 2 )$ \\   
$12$ & $3$ & $2$ & $( 2, 2, 2, 2, 2, 2 )$ \\   
\midrule
$16$ & $5$ & $8$ & $( 2 )$ & non-abelian \\
$16$ & $3$ & $16$ & $( 2 )$ & non-abelian \\    
$16$ & $2$ & $32$ & $( 2 )$ & non-abelian \\
$16$ & $2$ & $27$ & $( 3 )$ & non-abelian \\   
 $16$ & $9$ & $2$ & $( 2, 2 )$ \\   
 $16$ & $5$ & $4$ & $( 2, 2 )$ \\   
 $16$ & $3$ & $8$ & $( 2, 2 )$ \\   
 $16$ & $2$ & $16$ & $( 2, 2 )$ \\   
 $16$ & $4$ & $3$ & $( 3, 3, 3 )$ \\   
$16$ & $2$ & $9$ & $( 3, 3, 3 )$ \\   
 $16$ & $5$ & $2$ & $( 2, 2, 2, 2 )$ \\   
  $16$ & $3$ & $4$ & $( 2, 2, 2, 2 )$ \\   
\bottomrule
 \end{tabular} 
  \end{table}
\end{prop}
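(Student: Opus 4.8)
The plan is to turn the classification into a finite enumeration problem using the signature formula
\[
\sigma(\ka) = -\frac{|G|\, e(B)}{3}\sum_{i=1}^m \frac{r_i^2-1}{r_i^2}
\]
from the definition of the virtual signature. Writing $b=g(B)$ so that $e(B)=2-2b=-(2b-2)$, the condition $0<\sigma(\ka)\le 16$ becomes
\[
0 < \frac{|G|\,(2b-2)}{3}\sum_{i=1}^m \frac{r_i^2-1}{r_i^2} \le 16.
\]
First I would record the elementary constraints that bound each parameter: the base genus satisfies $b\ge 2$ (from the definition of a Kodaira fibration, $g(B)\ge 2$); each local ramification order satisfies $r_i\ge 2$ (imposed in Definition \ref{defin: virtual Kodaira}); and $|G|\ge 2$ with $|G|$ a multiple of each $r_i$ that actually occurs, since $\theta(\gamma_{ij})$ has order $r_i$ in $G$. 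The quantity $\sum_i(r_i^2-1)/r_i^2$ is a sum of terms each lying in $[3/4,1)$, so $m$ summands contribute at least $3m/4$; combined with $2b-2\ge 2$ and $|G|\ge 2$ this forces the product $|G|(2b-2)m$ to be bounded, giving only finitely many tuples $(b,|G|,m,r_1,\dots,r_m)$ to examine.

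\textbf{The enumeration.}

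Next I would carry out the enumeration systematically. For each admissible value of $\sigma\in\{4,8,12,16\}$, rearrange the formula to
\[
|G|\,(2b-2)\sum_{i=1}^m \frac{r_i^2-1}{r_i^2} = 3\sigma \in \{12,24,36,48\},
\]
so the left-hand side is a fixed integer. I would then loop over the possible multisets $(r_1,\dots,r_m)$ with $r_i\ge 2$: since each $(r_i^2-1)/r_i^2\ge 3/4$, the number of punctures obeys $m\le \tfrac{4}{3}\cdot\tfrac{3\sigma}{|G|(2b-2)}\le \tfrac{4}{3}\cdot\tfrac{3\sigma}{2\cdot 2}=\sigma$, a crude but finite bound. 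For each such multiset the product $|G|(2b-2)$ is determined as $3\sigma\big/\sum_i(r_i^2-1)/r_i^2$, and this must be a positive integer; I would discard any multiset for which it is not. The surviving value then factors as $|G|$ times $(2b-2)$ with $b\ge 2$ integral and $|G|$ a multiple of $\operatorname{lcm}(r_i)$, and each admissible factorisation yields one row of Table \ref{tab: virt Kodaira graph}.

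\textbf{The subtle points and the main obstacle.}

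The last column of the table records an additional group-theoretic constraint, and this is where the genuine content lies. When a single ramification order $r$ occurs with $|G|>r$ (for instance $\sigma=4$, $|G|=8$, $r=(2)$), a cyclic $G$ cannot support the data: if $G=\IZ/|G|$ were cyclic then every $\theta(\gamma_{ij})$ would be forced into a single cyclic factor and $\theta$ could not surject onto $G$ while keeping all $\gamma$-images of the prescribed order $r$ and satisfying $\sum_{ij}\theta(\gamma_{ij})$ consistent with $H_1(\hat F)$; hence the annotation ``non-abelian'' or ``non-cyclic.'' I would justify each such entry by checking the surjectivity of $\theta$ against the relation $\sum_{ij}\gamma_{ij}=0$ in $H_1(\hat F;\IZ)$ together with the compatibility condition, ruling out cyclic (or abelian) $G$ case by case. \textbf{The main obstacle} is precisely this column: the purely numerical sieve is routine bookkeeping, but deciding which $(b,|G|,m,r)$ actually admit a virtual Kodaira fibration—rather than merely satisfying the numerical identity—requires analysing whether a surjection $\theta\colon\pi_1(\hat F)\to G$ with the prescribed local orders exists at all, which depends on the structure of $G$ and not only on $|G|$. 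I would handle the abelian cases directly via the homology description $H_1(\hat F;\IZ)=\langle\alpha_k,\beta_k,\gamma_{ij}\rangle/\langle\sum\gamma_{ij}\rangle$ and flag the remaining cases as requiring a non-abelian $G$, consistent with the table's annotations.
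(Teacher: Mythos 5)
Your numerical sieve is set up exactly as in the paper (the same rearrangement of the signature formula, the bound $\frac{r_i^2-1}{r_i^2}\geq\frac34$, and the resulting finite list of tuples $(b,|G|,m,r)$), and your idea of using $H_1(\hat F;\IZ)=\langle\alpha_k,\beta_k,\gamma_{ij}\rangle/\langle\sum\gamma_{ij}\rangle$ to rule out abelian groups does correctly handle several of the annotations (e.g.\ it excludes $(12,5,2,(2,2,2))$ since three order-two elements of $\IZ/2$ cannot sum to zero, and it forces $G$ non-abelian whenever $m=1$). However, there are two genuine gaps.

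First, and most seriously, your enumeration keeps rows that are not in Table \ref{tab: virt Kodaira graph} and that no group-theoretic argument removes. The concrete example is $(\sigma,b,|G|,r)=(4,2,2,(2,2,2,2))$: it satisfies $\frac23(b-1)|G|\sum_i\frac{r_i^2-1}{r_i^2}=\frac23\cdot1\cdot2\cdot4\cdot\frac34=4$, and four order-two elements of $\IZ/2$ do sum to zero, so your homological sieve passes it. It is excluded only by a \emph{geometric} constraint special to graph type: when $g(B)=2$ every component of $D$ is the graph of an automorphism of $B$, and by \cite{cat-roll09} a genus-$2$ curve carries at most three automorphisms with pairwise disjoint graphs, so $m\leq3$; similarly $m\leq6$ when $g(B)=3$ (which is needed to kill, e.g., $(16,3,2,(2^8))$). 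Your proposal never invokes the graph-type hypothesis beyond the signature formula, so this constraint is entirely missing.

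Second, your justification of the ``non-abelian'' column for $m=1$ is not the right mechanism and misses an exclusion. With a single puncture the loop $\gamma$ is a product of commutators in $\pi_1(\hat F)$ (equivalently, $\gamma=0$ in $H_1$), so $\theta(\gamma)$ must be an element of order $r_1$ lying in the commutator subgroup $[G,G]$. This does force $G$ non-abelian, but it says more: it eliminates $(12,5,6,(2))$, which your procedure would retain and merely flag as ``non-abelian'', because the only non-abelian group of order $6$ is $S_3$ and $[S_3,S_3]\cong\IZ/3$ contains no element of order $2$. Your surjectivity-based heuristic (``$\theta$ could not surject onto $G$ while keeping all $\gamma$-images of the prescribed order'') does not detect this, and indeed surjectivity is not the relevant obstruction here.
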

\begin{proof}Let $d = |G|$.
In the inequality 
\[16\geq  \sigma(\ka)= \frac 23 (b -1){ d } \sum_{i=1}^m \frac{r_i^2-1}{r_i^2}\]
we know that $b\geq2$, $d\geq2$, $m\geq 1$ and $r_i\geq2$, thus $\frac{r_i^2-1}{r_i^2}\geq \frac 34$.
Hence, $32\geq (b-1)dm$ which together with the condition that the signature is divisible by $4$ leaves a small number of cases to consider.

We now proceed to exclude several cases until we arrive at the list given in the proposition; as in the table we will identify the cases by the tuple $(\sigma, g(B), |G|, r)$

First of all note that if $m=1$ then $\hat F$ is a curve of genus at least $2$ with only one puncture. So the loop around the puncture is a commutator in $\pi_1(\hat F)$ which is sent to an element of order $r_1$ in $G$. Thus, $G$ has to be non-abelian which excludes groups of order $2,3,4,5,7,9, 25$. The case  $(12, 5, 6, ( 2))$ is excluded, because $S_3$ does not contain an element of order $2$ in the commutator.

If $g(B)=2$ then all graphs are necessarily graphs of automorphisms and it is known from \cite{cat-roll09} that there can be at most three non-intersecting such graphs, so $m\leq3$ in this case.
In a similar fashion, if $g(B)=3$ then there cannot be more than six graphs, which excludes another case.

Lastly, in $\IZ/2^k$ no odd number of order two elements can sum up to zero which excludes the case 
$(12, 5, 2, ( 2, 2, 2 ))$ and restricts the possible groups in two other cases.
\end{proof}
\begin{rem}
 In some cases in the above proposition it is clear that the cover of $F$ factors through an \'etale cover, so it is tempting to do the unramified cover first and consider the pullback divisor in another product of curves. However, this will usually destroy the graph property.
\end{rem}

\subsection{Numerical classification of virtual double \'etale Kodaira fibrations of virtual signature $4$}
To classify the numerical possibilities in this case we need some further notation. Let $\ka= (F\times B, D, \theta\colon \pi_1(\hat F)\to G)$ be a double-\'etale virtual Kodaira fibration. For each component $D_i$ of $D$ let $d_i:=\deg (D_i \to B)$ and $e_i:= \deg (D_i \to F)$ be the degrees of the projections and let $r_i$ be the ramification index at $D_i$ as before. Then one can deduce from \cite[Prop.~3.1]{cat-roll09} or simply compute that
\[\sigma(\ka) = \frac 23 |G| (f-1)\sum_{i=1}^me_i\left(1-\frac 1{r_i^2}\right),\]
that is, if $\ka$ is realisable this is the signature of the Kodaira fibration obtained.

\begin{prop}
\label{prop: numerical types signature 4}
The numerical invariants of a virtual double \'etale Kodaira fibration of virtual signature $4$ are 
subject to restrictions, most notably $r_i=2$, which leave the combinations given in Table \ref{tab: virt = 4}.
\end{prop}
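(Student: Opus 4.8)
The plan is to read every arithmetic constraint off the signature formula preceding the statement. Writing $\sigma(\ka)=4$ and abbreviating $N:=|G|(f-1)$ turns it into
\[
  N\sum_{i=1}^m e_i\Bigl(1-\tfrac1{r_i^2}\Bigr)=6,
\]
where $m$ is the number of components of $D$. Since $f\geq 2$ and $|G|\geq 2$ we have $N\geq 2$, and since $e_i\geq 1$, $r_i\geq 2$ every summand is at least $\tfrac34$. Hence $6/N=\sum_i e_i(1-1/r_i^2)\geq \tfrac34 m$ gives $m\leq 8/N$, while the presence of a summand forces $6/N\geq\tfrac34$, i.e.\ $N\leq 8$. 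Only finitely many values of $N$, and of the factorisation $N=|G|(f-1)$, remain.

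The decisive extra input is that $r_i=\ord(\theta(\gamma_i))$ divides $|G|$. In the remaining range $6/N\leq 2$ and $m\leq 2$, so the equation has at most two summands of the form $e_i(1-1/r_i^2)$ with $e_i\geq 1$ and $1-1/r_i^2\in[\tfrac34,1)$. A short inspection eliminates $N\in\{3,5,6,7\}$: for $N\in\{5,6,7\}$ there is a single summand, which would force $1-1/r_1^2=6/(Ne_1)$ with no integral solution meeting $1-1/r_1^2\geq\tfrac34$; for $N=3$ two unit-degree summands total less than $2$ whereas any summand with $e_i\geq 2$ already exceeds $\tfrac32$. In the survivors $N\in\{2,4,8\}$ the divisibility $r_i\mid|G|$ together with the size bound forces $r_i=2$: it is automatic for $|G|=2$, and whenever $r_i\geq 3$ is a priori admissible the term $e_i(1-1/r_i^2)$ is too large to complete the sum to $6/N$ with integral $e_i$ (e.g.\ $N=8$ needs $e_1(1-1/r_1^2)=\tfrac34$, so $e_1=1$, $r_1=2$). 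This yields the headline restriction $r_i=2$, after which $\tfrac34\sum_i e_i=6/N$ gives $\sum_i e_i=8/N$; thus $N\mid 8$ and $\sum_i e_i\in\{4,2,1\}$.

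To convert these into actual tuples I would use that $D_i\to B$ and $D_i\to F$ are both unramified, so $g(D_i)-1=d_i(b-1)=e_i(f-1)$. Summing over components gives $(b-1)\sum_i d_i=(f-1)\sum_i e_i=8/|G|$; writing $P=\sum_i d_i=D.F$ for the number of points of $D$ on $F$, this reads $(b-1)P=8/|G|$. Combined with $N=|G|(f-1)\in\{2,4,8\}$, this pins down all admissible $(|G|,f,b,P)$ and then the component degrees $d_i,e_i$. I would finish by pruning with the single-puncture observation from the graph-type proof: if $P=1$ the unique meridian of $F\cap D$ is a product of commutators in $\pi_1(\hat F)$, hence trivial in every abelian quotient, so $G$ must be non-abelian; in particular $P=1$ is impossible for $|G|\in\{2,4\}$. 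The surviving tuples should be exactly those of Table \ref{tab: virt = 4}.

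The hard part will not be any single estimate but the completeness of the bookkeeping: the four conditions --- the size bound, the divisibility $r_i\mid|G|$, the \'etale relation $d_i(b-1)=e_i(f-1)$, and non-abelianity when $P=1$ --- must be imposed simultaneously so that the final list omits no numerically admissible tuple and keeps none forbidden by the group theory.
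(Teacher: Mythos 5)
Your arithmetic follows essentially the same route as the paper: extract rough bounds from $\sigma(\ka)=4$ via $\tfrac23|G|(f-1)\sum_i e_i(1-1/r_i^2)=4$, force $r_i=2$ and $|G|\in\{2,4,8\}$, then cut the list down using the unramified-covering relation $g(D_i)-1=d_i(b-1)=e_i(f-1)$ and the observation that an abelian $G$ is incompatible with a single puncture on $\hat F$ (the paper phrases this as $\deg(D\to B)=\sum_i d_i\geq 2$ for abelian $G$). These steps are all correct as you state them, and your identity $(b-1)\sum_i d_i=8/|G|$ is a clean way to organise the case analysis.

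However, your list of ``four conditions'' is not complete, and running your bookkeeping produces one tuple that is not in Table \ref{tab: virt = 4}: take $|G|=2$, $b=f=2$ and $D$ consisting of four components, each with $(d_i,e_i)=(1,1)$ and $g(D_i)=2$. This configuration has $r_i=2$, virtual signature $4$, four punctures on $\hat F$ (so your non-abelianity test does not apply), and satisfies every \'etale relation --- yet it does not appear in the table. Each such component is the graph of an isomorphism $B\to F$, so after normalising one component to the diagonal the configuration amounts to four pairwise disjoint graphs of automorphisms of a genus-$2$ curve; by \cite{cat-roll09} at most three such disjoint graphs exist, which is exactly the bound invoked in the proof of Proposition \ref{prop: numerical graph type} to get $m\leq 3$ when $g(B)=2$, and it is the one genuinely non-numerical exclusion the paper's proof performs here. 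Without adding this geometric input your final list retains a forbidden row, so the claimed completeness of the bookkeeping fails at precisely this point.
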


We distinguish between {G}raph cases and {C}orrespondence, that is, non-Graph cases.
Note that in this finer classification one of the three cases obtained in Proposition \ref{prop: numerical graph type} splits up into two cases according to the genus of $F$.
The right-hand side of Table \ref{tab: virt = 4} collects some information on realizability which we were able to obtain. 

\begin{table}\caption{Virtual Kodaira fibrations of virtual signature $4$} \label{tab: virt = 4}
\begin{tabular}{lrrcrrll}
 \toprule
type & $b$ &  $f$ & $|G|$ & $g(D_i)$ &  $(d_i, e_i)$ &\begin{minipage}[t]{2.2cm}topological classification \end{minipage} &  realisable\\
 \midrule
$G_1$& $2$& $2$ & $8$ & $2$ & $( 1  ,  1 )$ & Ex. \ref{ex: fpf auto} & no (Rmk. \ref{non-abelian}) \\   
$G_2$& $2$& $2$ & $4$ & $2,2$ & $( 1  ,  1 ), ( 1   ,  1 )$  & Ex. \ref{ex: fpf auto} & no (Ex. \ref{ex: free auto on genus 2}) \\  
$G_3$& $3$& $3$ & $2$ & $3,3$ & $( 1 ,  1 ), ( 1   ,  1  )$ &  Ex. \ref{ex: fpf auto} & no (Ex. \ref{ex: monodromy free involution}, \ref{ex: free auto of order 4 on g = 3})\\
$G_4$&$3$& $2$ & $2$ & $3,3$ & $( 1  ,  2 ), (  1 ,  2 ) $ & Ex. \ref{ex: two graphs 3->2} & no (Ex. \ref{ex: free auto on genus 2}) \\  
\midrule
$C_1$&$2$ & $2$ & $2$ & $5$&  $ (4  ,  4)$  &   &  \\   
$C_2$&$3$& $2$ & $2$ &   $5$&  $ (2  ,  4)$  & (Ex. \ref{ex:C23type}) & 
\\   
$C_3$&$2$& $3$ & $2$ & $5$ & $(  4 ,  2 )$  & (Ex. \ref{ex:C23type}) &  \\
$C_4$&$3$& $3$ & $2$ & $5$ & $( 2  ,  2 )$  & Ex. \ref{ex: C_4 type} &  no (Ex. \ref{ex: double bisection in product of genus 3 curves}, \ref{ex: C4 type with D6 symmetry}) \\ 
$C_5$&$2$& $5$ & $2$ & $5$ & $(  4 ,  1  )$ & & no (Rmk. \ref{non-abelian})\\
$C_6$&$3$& $5$ & $2$ & $5$ & $( 2  ,  1  )$ & Prop. \ref{prop: fpf autos}& no (Rmk. \ref{non-abelian})\\      
$C_7$&$2$& $2$ & $4$ & $3$ & $( 2 ,  2 ) $  & Ex. \ref{ex: double bisection} & no (Ex. \ref{ex: double bisection in product of genus 2}) \\   
$C_8$&$2$& $3$ & $4$ & $3$ & $( 2  ,  1 )$  & Prop. \ref{prop: fpf autos}& no (Rmk. \ref{non-abelian})\\   
$C_{9}$&$2$& $2$ & $2$ & $3,3$ & $( 2 ,  2 ), ( 2  ,  2 ) $  &  &  \\
$C_{10}$&$2$ & $2$ & $2$ & $ 4,2$ & $( 3  ,  3 ), ( 1 ,  1 ) $  &  &  \\   
$C_{11}$&$2$& $3$ & $2$ & $3,3$ & $( 2 ,  1 ), (  2  ,  1  )$  & Ex. \ref{ex: two graphs 3->2} & no (Ex. \ref{ex: free auto on genus 2}) \\   
$C_{12}$&$2$& $2$ & $2$ & $2,2,3$ & $(1,1), ( 1,  1 ), (2  ,  2 ) $  &  &  \\  
 \bottomrule
 \end{tabular}
\end{table}

\begin{proof}
 Setting $\sigma(\ka)=4$ we obtain, as in the proof of Proposition \ref{prop: numerical graph type}, the rough bounds
\[ 2\leq |G|\leq 8, \; 2\leq f\leq 5,\; 1\leq m\leq 4, \; 1\leq e_i\leq 4,\]
from which it is easy to generate a complete list of possibilities. 

It turns out that the ramification has to be  of order $2$ on every component of $D$ and $G$ can be of order $2, 4,8$. Unless $|G| = 8$ the group is necessarily abelian and therefore $\deg (D \to B) = \sum d_i\geq2$. This restricts the possible choices for the genus of $B$, which is implicitly controlled by the fact that $D_i\to B$ is a covering.

As in the proof of Proposition \ref{prop: numerical types signature 4}, the case with $4$ graphs of automorphisms of a curve of genus $2$ can be excluded by \cite{cat-roll09}.
\end{proof}

\begin{rem}
\label{non-abelian}
Now assume that some configuration in Table \ref{tab: virt = 4} is realisable. Then the $G$-cover $f\colon S \to F\times B$ induces a branched cover both of the horizontal and the vertical fibre. Thus if $G$ is abelian, both $\sum_id_i>1$ and $\sum_i e_i>1$. 
This does not hold in cases $C_5,C_6,C_8$ so  that these virtual Kodaira fibrations are not realisable. 

Moreover, if the order of $G$ is not $2$, (cases $G_1,G_2,C_7,C_8$), then any ramified cover $S\to F\times B$ realising $\ka$ factors over an unramified cover $\tilde F\times B \to F\times B$, since the local monodromies at the $D_i$ generate a central subgroup of order $2$.
Hence their configurations give via pullback one of the other configurations with group $G=\IZ/2$, (cases $C_5,C_{11},C_3,C_5$ respectively) and non-realisability of the latter implies non-realisability of the former. 
\end{rem}

\subsection{Outlook on topological existence and classification}
In this section we show that certain cases in Proposition \ref{prop: numerical graph type} and Proposition \ref{prop: numerical types signature 4} actually occur by providing suitable
examples of virtual Kodaira fibrations. 
We will not succeed in a complete classification of cases, because the topology becomes too involved. Let us substantiate this point with some remarks:
\begin{enumerate}
\item 
 One possibility to obtain a configuration with virtual signature 8 is to take a curve of genus 3 with 4 automorphisms with disjoint graphs. This is possible, but the classification of all topological possibilities is a non-trivial task (compare Example \ref{ex: four graphs of autos on genus 3}).
A thorough classification could use the classification of all possible
groups of effective automorphisms on curves of genus at most $48$ by Breuer in  \cite{breuer} (see also \cite{Broughton:91}), 
but still has to investigate mutually disjoint collections of graphs of automorphisms. 
\item  If we stick to virtual signature 4 then configurations of graph type can be classified (see below) but  the cases $C_1$, $C_2$, $C_3$, $C_5$, $C_{10}$ 
might involve some non-Galois covers 
$D\to F$ or $D\to B$
of degree $3$ or $4$ which again are not easily handled. (Compare Example \ref{ex:C23type}.)
\item  When the order of G gets larger it gets more complicated to find all topological equivalence
classes of coverings. In fact these classes correspond to orbits of epimorphisms  $\theta\colon \pi_1(\hat F)\to  G$ 
under the action of a suitable mapping class group.
They are classified by easily calculated invariants in the favourable cases of G being 
abelian \cite{Edmonds}, dihedral \cite{CLP11}, \cite{clp15} or certain split metacyclic groups
\cite{Weigl}. Further then that GAP \cite{MAPCLASSpackage} offers an implementation
to find all orbits for a given group G and given ramification type.
\end{enumerate}
In the following we only classify the pairs $(F\times B, D)$, neglecting for the moment the representation $\theta$. Note that for the monodromy computation only the topological type of a configuration is of importance. However, we want to construct complex manifolds, and thus have to make sure that the topological configuration  can be realised by algebraic curves.
\begin{example}[Types $G_1$, $G_2$, $G_3$]\label{ex: fpf auto}
 Let $B=F$ be a curve of genus $b$ and let $\phi_1, \dots, \phi_m$ be automorphisms of $B$ such that their graphs do not intersect. Precomposing with $\inverse \phi_1$ we may assume that $\phi_1 = \id_B$. Thus if $m=1$ there is only the configuration, namely $(B\times B, \text{diagonal})$.
 
Concerning the case $m=2$ note that the graph of  $\phi_2$ does not intersect the diagonal if and only if $\phi_2$ does not have fixed points. Automorphisms without fixed points on curves of small genus are classified in Appendix \ref{sect: fpf autos}, and  by Proposition \ref{prop: fpf autos}  there is a unique case if $g(B)=2$ and two distinct ones if $g(B)=3$.
\end{example}

\begin{example}[Type $C_7$]\label{ex: double bisection}
 Assume $F$ is a curve of genus $2$ and $D$ is a curve of genus $3$. Then any non-constant map $j_1\colon D\to F$ is an \'etale double cover which has a covering involution $\sigma_1 \in \Aut D$.
 
Now consider a second map $j_2\colon D\to B$ to a curve of genus $2$ corresponding to another involution $\sigma_2$. 
Assuming that $(j_1, j_2)$ embeds $D$ into $F\times B$ is the same as saying that $\phi = \sigma_1\inverse\sigma_2$ does not have a fixed point.
 
By Proposition \ref{prop: fpf autos} such an automorphism $\phi$ has order $2$ or $4$. If $\phi$ has order $2$ then a short computation shows that $\langle \sigma_1, \sigma_2\rangle \isom \IZ/2\times\IZ/2$ acts freely on $D$ which is impossible by the Hurwitz formula. Thus $\phi$ has order $4$ and consequently generates the normal cyclic subgroup in the dihedral group $D_4\isom \langle \sigma_1, \sigma_2\rangle$. By \cite{Broughton:91} $D_4$ acts on a curve of genus $3$ uniquely in such a way that each of $\sigma_1$, $\sigma_2$, and $\phi$ acts freely, and thus this topological configuration is unique. Since this topological action can be realised as a holomorphic group action on a curve $D$, the configuration can also be realised by algebraic curves.
\end{example}

\begin{example}[Type $G_4$]\label{ex: two graphs 3->2}
 Assume we have a curve $B$ of genus $3$ and a curve $F$ of genus $2$ and assume we have two \'etale double covers $\pi_i\colon B\to F$ corresponding to two involutions $\sigma_1$, $\sigma_2$ on $B$. Writing down the condition that $D_1 = \Gamma_{\pi_1}$ and $D_2 = \Gamma_{\pi_2}$ are disjoint in $F\times B$ we find the same condition on $\sigma_1\inverse\sigma_2$ as in Example \ref{ex: double bisection}, that is, topologically this configuration exists and is unique.
 
To see that it also exists in the holomorphic category, consider a curve $F$ of genus $2$ that admits a fixed point free automorphism $\phi$ (which has to be of order $6$ by Proposition \ref{prop: fpf autos}) and consider $\bar D = \Gamma_\id\cup \Gamma_\phi\subset F\times F$.  If $B\to F$ is any \'etale double cover then the pullback $D$ of $\bar D$ to $F\times B$ gives the desired configuration.
\end{example}

\begin{example}[Type $C_4$]\label{ex: C_4 type}
Assume we have a curve $D$ of genus $5$ and two \'etale double covers $j_1\colon D\to B$ and $j_2\colon D\to F$ corresponding to free involutions $\sigma$ and $\tau$, respectively. 
They generate a subgroup of automorphisms of $D$ isomorphic to the dihedral group $D_k$ where $k$ is the order of $\sigma\tau$.
Moreover, if we require that $(j_1,j_2)\colon D\to B\times F$ gives an embedding, then the composition $\sigma\tau$ also must be a free automorphism. By Appendix \ref{sect: fpf autos}, the order of $\sigma\tau$ is even and at most $8$.
In any case, on the quotient of $D$ by the normal cyclic subgroup $N=\langle\sigma\tau\rangle$ the action induced by $\sigma$ (or equivalently $\tau$) must be a free involution.

In case $k=8$ by Appendix \ref{sect: fpf autos} the quotient of $D$ by $N$ has genus $1$ and two branch points of multiplicity $2$.
So the free quotient by an involution is of genus $1$ with a single branch point of multiplicity $2$.
Since $\sigma,\tau$ act freely, only the central element of $D_8$ may be assigned to the branch point. But then there is no way to find
a surjection 
\[ \langle \alpha, \beta, \gamma \mid [\alpha, \beta]\gamma\rangle \to D_8, \qquad \gamma\mapsto (\sigma\circ\tau)^4.\]
In fact dividing by the normal subgroup generated by $(\sigma\circ\tau)^4$ we would get a surjection 
$\langle \alpha, \beta\mid [\alpha, \beta]\rangle \to D_4$ which is impossible since the source is abelian.
Thus the case $k=8$ cannot occur.

In case $k=6$ Appendix \ref{sect: fpf autos} gives two alternatives for the quotient of $D$ by $N$, but in one case the genus is $0$ and thus
the induced action of $\sigma$ cannot be free. In the other case an induced free involution on the quotient of genus $1$ with two branch points of multiplicity
$3$ is possible and give a quotient of genus $1$ with at single branch point. In fact, there is an epimorphism
represented by the map
\[ \langle \alpha, \beta, \gamma \mid [\alpha, \beta]\gamma\rangle \to D_6, \qquad \alpha\mapsto\sigma,
\beta\mapsto\tau, \gamma\mapsto (\sigma\circ\tau)^4.\]
which is unique up to equivalence by \cite{clp15}.

In the case it has order $k=4$, by Appendix \ref{sect: fpf autos} the subgroup generated by $\sigma\tau$ acts either freely or
with $(\sigma\tau)^2$ stabilising four points. The quotient of the first action is of genus two, thus the induced action by $\sigma$ cannot
be free contradicting the assumption. 

For the second action the quotient of $D$ by central involution $(\sigma\tau)^2$ is of genus two with the factor group
$D_4/\langle(\sigma\circ\tau)^2\rangle\cong \mathbb{Z}/2 \times \mathbb{Z}/2$ acting freely, which again is impossible.

In the case $\sigma\tau$ has order $k=2$, $D_k$ is abelian, $\langle\sigma,\tau\rangle\cong\mathbb{Z}/2\times\mathbb{Z}/2$. In this case, the whole group acts freely, hence by the result of Edmonds \cite{Edmonds} there are two topologically distinct cases corresponding to equivalence classes of epimorphisms
\[\langle\alpha_1,\beta_1,\alpha_2,\beta_2\mid [\alpha_1,\beta_1][\alpha_2,\beta_2]\rangle\to\mathbb{Z}/2 \times \mathbb{Z}/2\]
represented by maps
\begin{gather*}
\alpha_1,\alpha_2\mapsto (0,0),\beta_1\mapsto (1,0),\beta_2\mapsto (0,1),\\
\alpha_2,\beta_2\mapsto (0,0),\alpha_1\mapsto (1,0),\beta_1\mapsto (0,1).
\end{gather*}
Thus we have a complete topological classification for configurations of type $C_4$ in Table \ref{tab: virt = 4} obtaining three different topological types.
\end{example}

\begin{example}[Types $C_2$ and $C_3$]\label{ex:C23type}
We consider the case $C_2$; the case $C_3$ is the same with the role of $F$ and $B$ exchanged, and  the case $C_1$ is similar.  
Assume we have a curve $D$ of genus $5$, an \'etale  cover $j_1:D\to F$ of degree $e=4$, and an \'etale cover $j_2\colon D\to B$ 
of degree $d=2$. 
Then $(j_1,j_2)\colon D\to F\times B$ gives an embedding if and only if every pair of fibres of $j_1,j_2$ have at most one point in common.
Let $\tilde D \to F$ be the Galois closure of $D\to F$, with Galois group $\tilde G$ of $\tilde D\to D$ and a lift $\tilde \sigma\in\Aut \tilde D$ of
the involution $\sigma\in\Aut D$ associated to the double cover $D\to B$. To tame this large quantity of possibilities it needs 
some new ideas.

Still, to give a flavour we consider three specific cases:
\begin{description}
\item[dihedral case] Suppose
both projections $j_1,j_2$ are Galois with automorphism $\sigma$ of order two and $\phi$ of order four and $\sigma\circ\phi$ of order two:
Then the group generated is again the dihedral group $D_4$. But all elements have to act freely, not only $\sigma,\phi$ and therefore $\sigma\circ\phi$,
but also $\phi^2,\phi^3$ and therefore $\sigma\circ\phi^2,\sigma\circ\phi^3$. A free $D_4$-action on a genus $5$ curve is not possible.
\item[abelian case] Suppose
both projections $j_1,j_2$ are Galois with automorphism $\sigma$ of order two and $\phi$ of order four and $\sigma\circ\phi$ of order four
and the group generated being abelian:
Then  all seven elements have to act freely which again is not possible.

\item[simplest non-Galois case]
If $D\to F$ is non-Galois, then the image of the monodromy $\pi_1(F)\to S_4$ is a transitive subgroup of order 
$8$, $12$, or $24$.
In the minimal case the image is isomorphic to $D_4$ so the Galois closure $\tilde D\to F$ has $\tilde D$ of genus $9$ 
and the Galois group $D_4$ acts freely.
We further restrict by assuming the lift $\tilde \sigma\in \Aut \tilde D$ to be an involution and to commute with $D_4\subset \Aut\tilde D$.
Then $\tilde D\to D$ is an \'etale Galois cover corresponding to a reflection $\psi\in D_4$.
Therefore $\tilde D \to B$ is an \'etale Galois cover with Galois group generated by $\tilde \sigma$ and $\psi$.
We have thus a subgroup $C_2\times D_4$ in $\Aut\tilde D$ where the second factor and the subgroup generated
by $\tilde\sigma,\psi$ acts freely.

Now consider 
\[\begin{tikzcd}\tilde D \rar{/\psi}  & D \rar{j_1\times j_2} &F\times B.\end{tikzcd}\] 
We will now show that under the current assumptions the map from $D$ to the product is not injective, so we do not get a virtual double-\'etale Kodaira fibration. 

The group  $C_2\times D_4$ cannot act freely on $\tilde D$, since the group order and the Euler number of $\tilde D$ are the same up to sign.
Thus there is an automorphism  which has a fixed point $x$. Since the factor $D_4$ acts freely, the automorphism has the form 
$\tilde\sigma\psi\phi^i$ or $\tilde\sigma\phi^i$. We deduce that $y=\tilde\sigma(x)$ is in the orbit of $x$ under the  $D_4$-action. 
Thus the points $x$ and $\tilde\sigma(x)$  map to the same point in  $F\times B$. 
But since both $\psi\tilde\sigma$ and $\tilde\sigma$ do not have fixed points, 
the points are different and map to different points in $D$ so that the map from $D$ to the product $F\times B$ is not injective.

\end{description}
Accordingly these three cases are not possible but clearly we have not yet covered all possibilities.
\end{example}

\begin{exam}[graph type with virtual signature $8$]\label{ex: four graphs of autos on genus 3}
We will now give two examples of a curve $B=F$ of genus $3$ such that we have four disjoint graphs of automorphisms $D = \Gamma_\id\cup \Gamma_{\phi_1}\cup \Gamma_{\phi_2}\cup \Gamma_{\phi_1\phi_2}\subset F\times B$, where however the group generated by $\phi_1$ and $\phi_2$ is different in each case. Thus in this situation we do no longer have a unique configuration like in Example \ref{ex: two graphs 3->2}. 
The four graphs are disjoint if and only if none of 
\[ \phi_1,\, \phi_2,\, \phi_1\phi_2,
\, \inverse\phi_1\phi_2,\, \inverse\phi_2\phi_1\phi_2\sim \phi_1\]
has fixed points.

For the first example, consider the $D_4$ action from Example \ref{ex: double bisection} and let $\phi_i=\sigma_i$ be the involutions.

For the second example consider the quaternion group $Q_8$ generated by $i$ and $j$ and the Galois cover of an elliptic curve branched over one point given by the surjection
\[ \langle \alpha, \beta, \gamma \mid [\alpha, \beta]\gamma\rangle \to Q_8, \qquad \alpha\mapsto i, \beta\mapsto j, \gamma\mapsto -1.\]
Then $\phi_1=i$ and $\phi_2 = j$, considered as deck transformations, satisfy the required condition.
\end{exam}

\section{Computing realisation genera: examples}\label{sect: computing}
Building on the results in the previous sections we now give several examples. 
Our emphasis is on illustrating different approaches to explicit computations and on checking realisability for many of the virtual Kodaira fibrations from Table \ref{tab: virt = 4}.

Recall that, if we want to determine the realisation signature of a  virtual Kodaira fibration $\ka = (F\times B, D, \theta\colon \pi_1(\hat F)\to G)$ with abelian group $G$ then by Corollary \ref{cor: minimal pullback degree} we have to check that the global extension obstruction $o(\theta)$ vanishes and then we need to compute the index of the stabiliser of $\theta$ with the formula given in Corollary \ref{cor: monodromy action graph type}.

Usually this requires to compute the action of some automorphism on homology, for which we indicate different approaches.

\begin{rem}\label{rem: invariants}
While here our main focus is the signature there are related questions about minimal base and fibre genera. In Table \ref{tab: invariants}, we list for the examples that we construct a complete set of invariants for convenient reference. To unify notation, we consider the examples as double Kodaira fibrations $S\to B_1\times B_2$ and denote a general fibre of $S\to B_i$ by $F_i$. 
\end{rem}

\begin{table}\caption{\protect{Invariants of the Kodaira fibrations constructed (see Remark \ref{rem: invariants} for notation)}}\label{tab: invariants}
 \begin{tabular}{l cc cc cc cc}
\toprule
Example & $g(B_1)$ & $g(F_1)$ &$g(B_2)$ & $g(F_2)$ & $c_2(S)$&$c_1^2(S)$  &$\sigma (S)$ & $c_1^2(S)/c_2(S)$\\
\midrule

\ref{ex: monodromy free involution} ($b=3$) &
$9$ & $6$ & $3$ & $21$ & $160$ & $368$ & $16$ & $2+3/{10}$\\
\ref{ex: free auto on genus 2} 
& $17$& $4$ & $2$ & $49$ & $192$ &  $480$ & $32$ & $2+ 1/2$\\
\ref{ex: free auto of order 4 on g = 3} & 
$33$ & $6$ & $3$  &$81$ & $640$ & $1472$ & $64$ &$2+3/10$\\
\ref{ex: sl23} &
$10$ & $7$ & $2$ & $55$ & $216$ & $576$ & $48$ & $2+2/3$\\
\ref{ex: double bisection in product of genus 2}  &
$9$ & $4$ & $2$ & $25$ & $96$ & $240$ & $16$ & $2+1/2$\\
\ref{ex: double bisection in product of genus 3 curves} (Type 1)&
$9$ & $6$ & $3$ & $21$ & $160$ & $368$ & $16$ & $2+3/10$\\
\ref{ex: double bisection in product of genus 3 curves} (Type 2)&
$17$ & $6$ & $3$ & $41$ & $320$ & $736$ & $32$ & $2+3/10$\\
\ref{ex: C4 type with D6 symmetry} &
$65$ & $6$ & $3$ & $161$ & $1280$ & $2944$ & $128$ & $2+3/10$\\
\ref{ex: four disjoint graphs} &
$5$ & $7$ & $3$ & $13$ & $96$ & $240$ & $16$ & $2+1/2$\\
\bottomrule
 \end{tabular}
 \end{table}

\subsection{Examples of graph type}
Here we consider some virtual Kodaira fibrations where the divisor $D\subset F\times B$ is given as a union of graphs of maps $\phi_i\colon B\to F$. Recall that if $F=B$ then we can (and will)  assume $\phi_1=\id_B$.

\subsubsection{Two graphs of automorphisms}
In this case $D = \Gamma_\id \cup \Gamma_\phi$ for some fixed-point-free automorphism $\phi$. Examples of such are easy to give using the results of Appendix \ref{sect: fpf autos} and to have a chance for small realisation signature it is natural to consider double covers of $F$ branched exactly at the intersection $F\cap D$. 

\begin{exam}[free involutions]\label{ex: monodromy free involution}
 Let $B$ be a curve of odd genus $b=2(q-1)+1\geq 3$ admitting a fixed point free involution $\sigma$. By Appendix \ref{sect: fpf autos}, topologically there is a unique such case.
 
 Then for an appropriate choice of basis, $\sigma$ acts on homology by a block diagonal $2b\times 2b$ matrix 
 \[ \sigma_* = \begin{pmatrix} 1 & 0\\ 0& 1\\ && A\\ &&&\ddots \\&&&&A\end{pmatrix} \text{ where } A=\begin{pmatrix} 0&0&1&0\\0&0&0&1\\1&0&0&0\\0&1&0&0\end{pmatrix},
 \]
 as can be seen by arranging all holes on a line and considering the rotation by $\pi$ through the middle hole.
 We choose a double cover of $F$  branched exactly over the two punctures $F\cap D$. 
 Thus $\iota  = \id+\sigma_*$ (considered modulo $2$) has rank $b-1$ and the kernel of $\iota$ has index $2^{b-1}$. Thus the realisation genus is $\tilde b = 2^{b-1}(b-1)+1$ and we have realisation signature $\tilde \sigma= 2^{b}(b-1)$.
 
 Branched covers of a pullback of this configuration 
were first considered in  \cite{atiyah69, hirzebruch69}. The minimal degree for the pullback for $b=3$ was already found in \cite{BDS}, yielding a double \'etale Kodaira fibration of signature $16$. 
\end{exam}

\begin{exam}[The free automorphism on a curve of genus $2$] \label{ex: free auto on genus 2}
Let $\phi$ be a fixed-point-free automorphism of order $6$ on a curve $B=F$ of genus $2$. By Proposition \ref{prop: fpf autos} there is a unique topological type, which has ramification type $(0\mid 2^2, 3^2)$ (compare Definition \ref{def: ram type}) and is  realised by the surjection
\begin{gather*}
\pi_1^{\text{orb}}(\IP^1; 2,2,3,3) = \langle\gamma_1,\gamma_2,\gamma_3,\gamma_4\mid\gamma_1\gamma_2\gamma_3\gamma_4, \gamma_1^2, \gamma_2^2, \gamma_3^3, \gamma_4^3\rangle\rightarrow \mathbb{Z}/6, \\
(\gamma_1,\gamma_2,\gamma_3,\gamma_4)\mapsto (\phi^3,\phi^3,\phi^2,\phi^4). 
\end{gather*}

We first compute the orbifold fundamental group of the intermediate $\IZ/3$-cover to be
\[\left\langle \gamma_1,\gamma_2, \delta_1 = \gamma_3 \gamma_1 \gamma_3^2,\delta_2 = \gamma_3 \gamma_2 \gamma_3^2,\delta_3 = \gamma_3^2 \gamma_1 \gamma_3,\delta_4 = \gamma_3^2 \gamma_2 \gamma_3
\mid \gamma_i^2, \delta_i^2,  \gamma_1\gamma_2\prod \delta_i\right\rangle\]
and then can  write
\[\pi_1(B) = \left\langle \alpha_1 = \gamma_1\gamma_2, \beta_1 = \delta_1\gamma_2, \alpha_2 = \delta_2\delta_3, \beta_2 = \delta_4\delta_3 \mid \prod [\alpha_i, \beta_i]\right\rangle,\]
that is, $\alpha_1, \dots, \beta_2$ represent a symplectic basis of $H_1(B; \IZ)$.
Since the element  $\gamma_3^2\gamma_1 \in \pi_1^{\text{orb}}(\IP^1; 2,2,3,3)$  maps to  the generator  $\phi$ of $\IZ/6$ we see that for $\eta \in H_1(B; \IZ)$ we have
\[\phi_*(\eta) = \gamma_3^2\gamma_1 \eta \inverse{(\gamma_3^2\gamma_1)} \mod [\pi_1(B), \pi_1(B)].\]
It remains to compute this for the generators of $\pi_1(B)$, which in additive notation gives
$\phi_*(\alpha_1) = \beta_2$, $\phi_*\beta_1 = \beta_1 - \alpha_2$, $\phi_*(\alpha_2) = \beta_1$, $\phi_*\beta_2 = \beta_2 - \alpha_1$. 

We now consider $D = \Gamma_\id \cup \Gamma_{\phi}\subset F\times B$ and let $\theta$ be a homomorphism that defines a double cover of $F$ branched exactly over $D\cap F$. 
The obstruction cocycle from Corollary \ref{cor: minimal pullback degree} vanishes, and thus by Corollary \ref{cor: monodromy action graph type} the minimal realisation degree is the order of the image of $\iota \colon H_1(B; \IZ) \to H_1(B; \IZ/2) $ given by 
\[\id+\phi_* = \id + \begin{pmatrix} 
                      0 & 0 & 0& -1\\
                      0 & 1 & 1 & 0\\
                      0 & -1 & 0 & 0\\
                      1 & 0 & 0& 1
                     \end{pmatrix},
\]
which has full rank. Thus the realisation signature is $\tilde \sigma = 32$ 

Next consider the case where $G = \IZ/4$ as in Case $G_2$ from Table \ref{tab: virt = 4}. 
To compute the realisation signature we check that $o(\theta)$ is again zero and we have to use the above matrix considered with coefficients in $\IZ/4$
and multiplied by $2\in\IZ/4$, the value $\gamma$ takes around the punctures.
Also in this case the image has full rank, but we need to be more careful, since
$\IZ/4$ is not a field. In fact, the image is the kernel of the reduction modulo $2$,
due to the factor $2$ above. Hence the image has cardinality $16$ again,
the configuration is not realisable and the realisation signature is $64$.
We get the same result in case $G=\IZ/2\times\IZ/2$ by a similar argument.

 Notice, that in these cases the ramification at both intersection points has order $2$ and thus any realisation would factor as $S\to \tilde F\times B\to F\times B$ where $\tilde F \to F$ is an \'etale double cover. In other words, any realisation factors over Case $C_{11}$ in Table \ref{tab: virt = 4}. Switching the role of fibre and base we see that a realisation of $C_{11}$ is the same as a realisation of $G_4$.

Consequently,  neither $G_2$ nor $G_4$ nor $C_{11}$ from Table \ref{tab: virt = 4} is realisable. 
\end{exam}

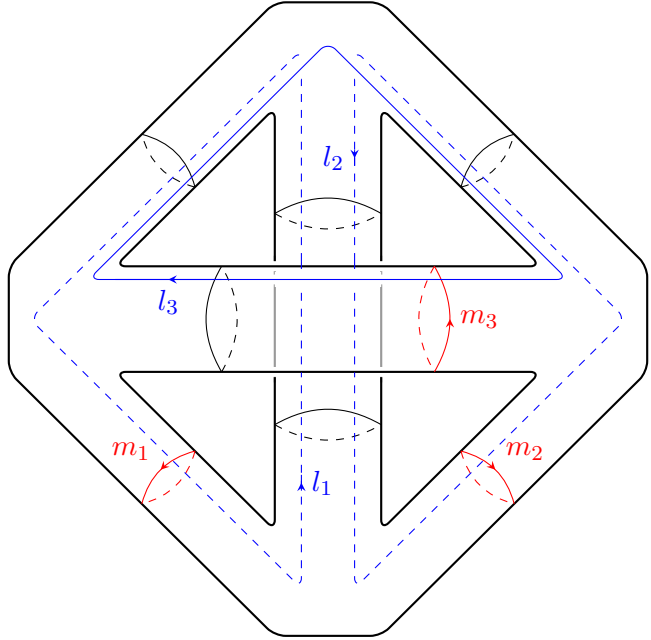
\begin{figure}\caption{Genus three surface with $\mathbb{Z}_4$- symmetry}\label{fg:ord 4 auto on g=3}
\definecolor{qqqqff}{rgb}{0.0,0.0,1.0}
\definecolor{ffqqqq}{rgb}{1.0,0.0,0.0}
\begin{center}
\begin{tikzpicture}
 [border/.style ={thick, black, rounded corners},
m/.style = { red, rounded corners},
l/.style = { blue, rounded corners},
 scale = .7]

\draw[border, white!60!black](1,1.2) -- (1,-1.2) (-1,1.2)-- (-1,-1.2);
\draw[white, line width = .13cm] 
(-1.2, 1) -- ++ (2.4, 0)
(-1.2, .75) -- ++ (2.4, 0)
(-1.2, -1) -- ++ (2.4, 0);

\draw[border] (1,1.1) -- (1,4) -- (4,1) -- (-4,1) -- (-1,4) -- (-1,1.1);
\draw[border] (1,-1.1) -- (1,-4) -- (4,-1) -- (-4,-1) -- (-1,-4) -- (-1,-1.1);
\draw[border] (1, 6) -- (6,1) -- (6,-1) -- (1, -6) -- (-1, -6) -- ( -6, -1) -- (-6, 1)--  (-1, 6) -- cycle;

\begin{scope}[m];
 \draw[mid arrow] (-2.5,-2.5) to[bend right] node[above left] {$m_1$}  (-3.5,-3.5);
 \draw[dashed] (-2.5,-2.5) to[bend left]  (-3.5,-3.5);
 
  \draw[mid arrow] (2.5,-2.5) to[bend left] node[above right] {$m_2$}  (3.5,-3.5);
 \draw[dashed] (2.5,-2.5) to[bend right]  (3.5,-3.5);
 
 \draw[mid arrow] (2,-1) to[bend right] node[right] {$m_3$}  (2,1);
 \draw[dashed] (2,1) to[bend right]  (2,-1);
\end{scope}

\begin{scope}
 \draw 
 (-2, -1) to[bend left] (-2, 1)
   (-1, 2) to[bend left] (1, 2)
 (-1, -2) to[bend left] (1, -2)
 (-2.5, 2.5) to[bend right] ++(-1,1)
  (2.5, 2.5) to[bend left] ++(1, 1);
 \draw[dashed] 
 (-2, -1) to[bend right] (-2, 1)
 (-1, -2) to[bend right] (1, -2)
  (-1, 2) to[bend right] (1, 2)
 (-2.5, 2.5) to[bend left] ++(-1,1)
 (2.5, 2.5) to[bend right] ++(1, 1);
\end{scope}

\begin{scope}[l]
 \begin{scope}[dashed]
 
 \draw[dashed] (-0.5, 1.1) -- ++ ( 0,4) -- ++ (-5.1, -5.1) -- ++(5.1, -5.1) -- ++ (0, 4);
 \draw[dashed] (-0.5, 1.1) -- ++ (0, -2.2);

  \draw[dashed] (0.5, 1.1) -- ++ ( 0,4) -- ++ (5.1, -5.1) -- ++(-5.1, -5.1) -- ++ (0, 4);
 \draw[dashed] (0.5, 1.1) -- ++ (0, -2.2);
\end{scope}
 \draw[mid arrow] (0.5, 3.2) to node[left] {$l_2$} ++(0,-.3);
 \draw[mid arrow] (-0.5, -3.25) to node[right] {$l_1$} ++(0,.3);

 \draw[white, line width = .2cm]
 (0, 5.25)++ (-2, -2) -- (0, 5.25) -- ++(2, -2)
 (-1, 0.75) -- ++(2,0);
 \draw (4.5, 0.75) -- (0, 5.25) -- (-4.5, 0.75) -- cycle;

 \draw[mid arrow] (-2, 0.75) to node[below] {$l_3$} ++(-2,0);
 \end{scope}
\end{tikzpicture}
\end{center}
\end{figure}

\begin{exam}[The free automorphism of order $4$ on a curve of genus $3$]\label{ex: free auto of order 4 on g = 3}
Let $\phi$ be a fixed-point-free automorphism of order $4$ on a curve $B=F$ of genus $3$.  By Proposition \ref{prop: fpf autos} there is a unique topological type, realised by the surjection 
\[\langle\alpha,\beta,\gamma_1,\gamma_2\mid[\alpha,\beta]\gamma_1\gamma_2\rangle\rightarrow \mathbb{Z}/4, \qquad (\alpha,\beta;\gamma_1,\gamma_2)\mapsto (1,\phi;\phi^2,\phi^2).\]
From this, we can find the topological model as in Figure \ref{fg:ord 4 auto on g=3}.
Now consider $D=\Gamma_{id}\cup\Gamma_{\phi}\subset F\times B$ and
consider any homomorphism $\theta$ which defines the double cover of $F$ branched over $F\cap D$.

By Cor \ref{cor: monodromy action graph type} and Prop \ref{prop: tautological graph}, the degree of the minimal pullback is given by the order of $H_1(B;\mathbb{Z})/\ker\iota$, where
\[\iota\colon H_1(B;\mathbb{Z})\rightarrow H_1(F;\IZ/2),\quad\alpha\mapsto (id+\phi_*)\alpha\otimes 1.\]
Since 
\[\phi_*=\begin{pmatrix} 0 & 0 & -1 &  &  &  \\
1 & 1 & 1 &  &  &  \\
0 & -1 & 0 &  &  &  \\
 &  &  & 1 & 0 & -1 \\
 &  &  & 1 & 0 & 0 \\
 &  &  & 1 & -1 & 0  
\end{pmatrix}\]
with respect to the basis $\{m_1,m_2,m_3,l_1,l_2,l_3\}$ of $H_1(F;\mathbb{Z})$ depicted in Figure \ref{fg:ord 4 auto on g=3},
the degree of the minimal pullback is $16$, and hence the realisation genus $\tilde{b}=33$ and the realisation signature $\tilde{\sigma}=64$. Therefore, this example together with Ex. \ref{ex: monodromy free involution} tells us that a virtual Kodaira fibration of $G_3$ type is not realisable. In fact, we have addressed  both types of fee automorphism on a genus $3$ curve
from the classification in Proposition \ref{prop: fpf autos}.
\end{exam}

\subsubsection{Examples with more than two graphs of automorphisms}

\begin{exam}[Triple cover branched over three graphs on a curve of genus $2$]\label{ex: sl23}

We now compute the monodromy and realisation signature in a more complicated case where we have three different graphs of automorphisms. This examples was considered in \cite{cat-roll09} because it has slope 
\[\frac{c_1^2}{c_2} = 2+ \frac{3\sigma}{e}  = 2+2/3,\]
the maximal known. 
In addition it gives an example of a rigid algebraic surface, in the sense that there are no non-trivial deformations.

We first need to construct a triangle curve $B$ of genus  $2$ with automorphism group 
$\mathrm{Sl}_2(\IZ/3)$ and ramification type $(0\mid 3,3,4)$. In loc.\,cit.\ this was achieved by giving a generating vector, however we need a more explicit description to describe the induced action on homology.

Let $\pi_1^{\text{orb}}(\IP^1; 3,3,4) = \langle \gamma_1, \gamma_2, \gamma_3 \mid \gamma_1\gamma_2\gamma_3, \gamma_1^3, \gamma_2^3, \gamma_3^4\rangle$ and consider the sequence of surjections
\[
 \begin{tikzcd}[row sep = small, column sep = small]
\pi_1^{\text{orb}}(\IP^1; 3,3,4)    \rar{\rho} & \mathrm{Sl}_2(\IZ/3) \rar & A_4 \rar & \IZ/3\\
  \gamma_1 \rar[mapsto] & \begin{pmatrix} 0&2\\1& 2\end{pmatrix} \rar[mapsto] & (123) \rar[mapsto] & 1\\
  \gamma_2 \rar[mapsto] & \begin{pmatrix} 0&1\\2& 2\end{pmatrix} \rar[mapsto] & (234) \rar[mapsto] & 2\\
    \gamma_3 \rar[mapsto] & \begin{pmatrix} 2&2\\2& 1\end{pmatrix} \rar[mapsto] & 
(13)(24) \rar[mapsto] & 0\\
 \end{tikzcd}.
\] 
These correspond to a factorisation of $B\to B/ \mathrm{Sl}_2(\IZ/3)$ as a sequence of three (abelian) ramified Galois coverings or equivalently to a chain of subgroups 
\[ \pi_1(B)\triangleleft\Pi_2\triangleleft\Pi_1\triangleleft\pi_1^{\text{orb}}(\IP^1; 3,3,4) \]
Step by step one can compute (by hand and then check with \cite{GAP4})
\begin{align*}
 \Pi_1 & = \left\langle \delta_1 =\gamma_3, \delta_2 = \gamma_1\gamma_3\inverse \gamma_1, \delta_3 = \inverse\gamma_1\gamma_3\gamma_1 \mid\delta_1\delta_2\delta_3, \delta_i^4 \right\rangle,\\
 \Pi_2 & = \left\langle\eta_1 = \delta_1^2, \eta_2 = \delta_2^2, 
 \eta_3 = \delta_3\delta_1^2\inverse\delta_3 , \eta_4 = \delta_3\delta_2^2\inverse\delta_3, \right. \\
 & \qquad\qquad\qquad\qquad\qquad \left. \eta_5 = \delta_3^2, \eta_6 = \delta_1\delta_3^2\inverse\delta_1 \mid \textstyle\prod_i\eta_i, \eta_i^2 \right\rangle,\\
 \pi_1(B) & = \left\langle\alpha_1 = \eta_1\eta_2, \beta_1 = \eta_3\eta_2, \alpha_2 = \eta_4\eta_5, \beta_2 = \eta_6\eta_5 \mid \textstyle\prod_i[\alpha_i, \beta_i] \right\rangle.
 \end{align*}
We compute the action of $\mathrm{Sl}_2(\IZ/3)$ on $H_1(B, \IZ)$  for the generators $g_1 = \rho(\gamma_1)$ and $g_2 = \rho(\gamma_2)$ by calculating the images in homology of the conjugation of  the generators of $\pi_1(B)$ with $\gamma_1$ respectively $\gamma_2$.

This results  in the following matrices describing the action with respect to the symplectic basis  $\alpha_1, \beta_1, \alpha_2, \beta_2$ of $H_1(B, \IZ)$.
\[
 {g_1}_* = \begin{pmatrix} -1 & 0&0&-1\\ -1 & 0& 1& -1\\1& -1& -1& 0\\1&0&0&0\end{pmatrix}
 ,\quad
{g_2 }_* = \begin{pmatrix} -1& -1&0 & -1\\0&0&1&-1\\ 1&-1&-2&2\\ 1&0&-1&1\end{pmatrix}.
 \]
 Note also that $g_3^2  = \rho(\gamma_3)^2 = -\id\in \mathrm{Sl}_2(\IZ/3)$ is the only non-trivial element in the centre and acts on $B$ as the hyperelliptic involution, hence as $-\id$ on homology.
 
We now return to the configuration. All three automorphisms $\phi_2: = -g_1$,  $\phi_3 = -g_2$, and $\phi_2\circ\phi_3^{-1} = g_1g_2^2$ have order $6$ and hence no fixed points and thus we can consider the virtual Kodaira fibration $(B\times B, \Gamma_\id\cup \Gamma_{\phi_2}\cup \Gamma_{\phi_3}, \theta)$ where $\theta$ defines a triple cover branched at the three points.
The global extension obstruction vanishes and thus we need to compute the index of the kernel of 
\[ \id +{\phi_2}_*+{\phi_3}_* = \id -{g_1}_*-{g_2}_* =  
 \begin{pmatrix} 3 & 1& 0& 2\\ 1& 1& -2 & 2 \\ -2 & 2 & 4& -2 \\ -2 & 0 & 1 & 0 \end{pmatrix}
 \]
considered as a map $H_1(B; \IZ) \to H_1(B;\IZ/3)$. This matrix has 
rank $2$, thus the realisation signature is $\tilde\sigma  = 9\cdot \frac{16}{3}=48$ and the realisation genus is $\tilde b = 10$.
\end{exam}

\subsection{Examples of correspondence type}

\begin{figure}\caption{$D_4$ symmetry on a surface of genus 3}
\label{fg:$D_4$ symmetry on g=3}
\definecolor{qqqqff}{rgb}{0.0,0.0,1.0}
\definecolor{ffxfqq}{rgb}{1.0,0.4980392156862745,0.0}
\begin{center}
\begin{tikzpicture}[line cap=round,line join=round,>=triangle 45,x=1.0cm,y=1.0cm, scale=0.8]
\clip(-2.58,-3.6799999999999984) rectangle (7.98,5.260000000000001);
\draw [rotate around={30.924377491634463:(2.089108251606241,1.5162231191771636)},line width=1.2000000000000002pt,color=ffxfqq] (2.089108251606241,1.5162231191771636) ellipse (2.7547202458669746cm and 2.277714563545965cm);
\draw [shift={(-1.04,-0.02)}] plot[domain=-0.10171049718600589:1.0169885413242303,variable=\t]({1.0*4.291373179850469*cos(\t r)+-0.0*4.291373179850469*sin(\t r)},{0.0*4.291373179850469*cos(\t r)+1.0*4.291373179850469*sin(\t r)});
\draw [shift={(5.3,3.1)},dash pattern=on 2pt off 2pt]  plot[domain=3.0129053492520206:4.18502161974507,variable=\t]({1.0*4.117993944617686*cos(\t r)+-0.0*4.117993944617686*sin(\t r)},{0.0*4.117993944617686*cos(\t r)+1.0*4.117993944617686*sin(\t r)});
\draw [line width=1.2000000000000002pt,color=qqqqff] (2.14,1.62)-- (0.94,-2.64);
\draw [line width=1.2000000000000002pt,color=qqqqff] (1.5074634216841138,-0.6255048530213968) -- (1.3956189337232559,-0.4693292771051437);
\draw [line width=1.2000000000000002pt,color=qqqqff] (1.5074634216841138,-0.6255048530213968) -- (1.6843810662767442,-0.5506707228948584);
\draw [color=qqqqff](0.02,-1.2999999999999987) node[anchor=north west] {$\mathbf{\tau}$};
\draw [color=ffxfqq](3.7800000000000002,4.720000000000001) node[anchor=north west] {$\mathbf{\sigma}$};
\draw [color=qqqqff](1.42,-2.0599999999999987) node[anchor=north west] {\textbf{x -axis}};
\draw [line width=1.2000000000000002pt] (2.14,1.62)-- (2.7,1.6);
\draw [line width=1.2000000000000002pt] (3.18,1.6)-- (6.9,1.44);
\draw [line width=1.2000000000000002pt] (5.159889158272156,1.5148434770635633) -- (5.033554346329454,1.3701385521598057);
\draw [line width=1.2000000000000002pt] (5.159889158272156,1.5148434770635633) -- (5.046445653670546,1.6698614478401945);
\draw (5.88,1.120000000000001) node[anchor=north west] {\textbf{y-axis}};
\draw [color=ffxfqq](-0.2,0.780000000000001) node[anchor=north west] {$L_1$};
\draw (3.42,0.960000000000001) node[anchor=north west] {$R_1$};
\draw (0.72,1.8400000000000007) node[anchor=north west] {$L_2$};
\draw [color=ffxfqq](4.82,3.000000000000001) node[anchor=north west] {$R_2$};
\end{tikzpicture}
\end{center}
\end{figure}
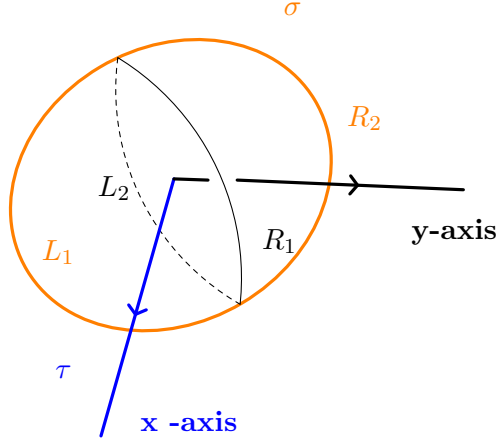

\begin{exam}[Double bisection in a product of two curves of genus two]\label{ex: double bisection in product of genus 2}
In Example \ref{ex: double bisection}, we discussed  the existence and the uniqueness of the configuration corresponding to type $C_7$. 

For a group $G$ of order two  or four, we consider any surjective homomorphism $\theta\colon\pi_1(\hat{F})\to G$ satisfying the ramification condition and the liftability condition. In order to compute the realisation signature of the virtual Kodaira fibration   
$\mathcal{A}=(B\times F, D=(\pi_{\sigma}\times\pi_{\tau})(D_0), \theta\colon\pi_1(\hat{F})\to G)$, we first observe that the global extension obstruction $o(\theta)$ vanishes automatically and thus it remains to calculate the stabiliser of $\theta$. 

For this we describe an explicit topological model of a surface $D$ of genus $3$ with with free involutions $\sigma$ and $\tau$  such that their composition is also free. Such a pair of free involutions appeared in \cite{mitsumatsu2008foliations} to study the self-intersection number of multi-sections of any $\Sigma_g$ bundle over $\Sigma_h$.
Take a graph $\Gamma$ as the intersection of the standard embedded $2$-sphere $S^2$ and $\{(x+y)(x-y)=0\}$ in $\mathbb{R}^3$, and let $D$ be the smooth boundary of a thin regular neighbourhood of $\Gamma$ in $\mathbb{R}^3$. In Figure \ref{fg:$D_4$ symmetry on g=3}, $S^2\cap\{x+y=0\}$ is drawn in orange and $S^2\cap\{x-y=0\}$ is drawn in black. We can think of the rotation of the surface $\Sigma_3$ by $\pi$, denoted by $\sigma$, around the great circle in orange and another $\pi$-rotation of $D$, denoted by $\tau$, around the $x$-axis which is in coordinates $(x,y,z)\mapsto (x,-y,-z)$. 

Under the action of the first involution $\sigma$ on $D$, the torus around the great circle in orange is invariant and rotated by $\pi$ around the core circle, while the other two $1$-handles connecting the regions close to the poles are exchanged. The second involution $\tau$ on $\Sigma_3$ is nothing but the $\pi$-rotation around the axis passing through the middle hole. Evidently, both involutions and their composition are fixed-point-free.

Let $B=D/\sigma$ and $F=D/\tau$, so that $D$ embeds into $F\times B$. 
Now we are ready to compute $\iota\colon H_1(B;\mathbb{Z})\to H_1(F;G)$.
 In Figure \ref{fg:$D_4$ symmetry on g=3}, we fix an orientation of the arc $L_1$ as downward and those of other three arcs as upward. Then we have homology classes of $D$ represented by three longitudes of $\{L_1R_1,L_1R_2,L_1L_2\}$ and three meridians of $\{R_1,R_2,L_2\}$ oriented coherently with the corresponding longitudes.
Now we can take these meridians and longitudes 
of $\{L_1R_1,L_1R_2,L_1L_2\}$ as a basis of $H_1(D;\mathbb{Z})$. Moreover, we have the induced bases of $H_1(B;\mathbb{Z})$ and $H_1(F;\mathbb{Z})$, which are block-wisely $\{L_1R_1,L_1R_2\}$ and $\{R_1,R_2\}$, respectively. 
With respect to these bases, we can compute
\[\pi_{\sigma}^! = \begin{pmatrix} 
1 & 0 & &\\
0&1& &\\
&&1&0\\ 
&&0&2\\ 
1&0&&\\ 
0&1&& \end{pmatrix},
\quad (\pi_{\tau})_*=\begin{pmatrix}  1 &  & 0 &  & 0 &  \\
 & 2 &  & 1 &  & 1\\
 &  & 1 &  & -1 &  \\
&  &  & 1 &  & -1
\end{pmatrix}\]

and hence we get $\iota\colon H_1(B;\mathbb{Z})\to H_1(F;\mathbb{Z}/2)$
\[\iota=(\pi_{\tau})_*\pi_{\sigma}^!\otimes 1= \begin{pmatrix}
1&&0&\\
&3&&2\\
1&&1&\\
&-1&&2
\end{pmatrix} \mod 2
\]
Therefore, we get the index $[\pi_1(B)\colon \Stab_\theta]=8$, the realisation genus $\tilde{b}=9, \tilde{f}=4$ and the realisation signature $\tilde{\sigma}=16$. 

These invariants are shared by example $X_{2,2}$ of \cite{BD} and there are 
enough similarities to conjecture that their surface can be recovered by our construction and 
vice versa.

Finally for a group $G$ of order $4$, by the same argument, $[\pi_1(B)\colon \Stab_\theta]=8$, $\tilde{b}=9$, $\tilde{f}=7$, and $\tilde{\sigma}=32$. Therefore, $C_7$ type is not realisable. 
\end{exam}

\begin{exam}[$C_4$ type: free $\mathbb{Z}/2\times \mathbb{Z}/2$ actions on a curve of genus $5$]\label{ex: double bisection in product of genus 3 curves}
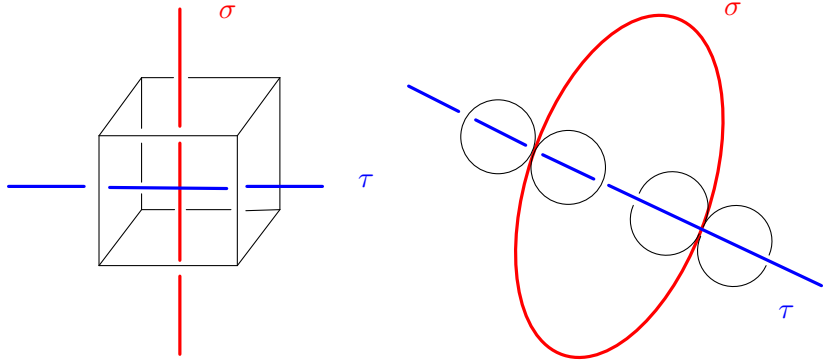
\begin{figure}\caption{$\mathbb{Z}/2\times\mathbb{Z}/2$ symmetries on a surface of genus $5$}
\label{fig: V4 on genus 5}
\definecolor{qqqqff}{rgb}{0.0,0.0,1.0}
\definecolor{ffqqqq}{rgb}{1.0,0.0,0.0}
\begin{tikzpicture}[line cap=round,line join=round,>=triangle 45,x=1.0cm,y=1.0cm,scale=0.7]
\clip(-3.1,-2.7000000000000033) rectangle (13.08,5.0);
\draw (1.5,2.04)-- (2.3,3.14);
\draw (-1.1,2.04)-- (1.5,2.04);
\draw (1.5,-0.42)-- (2.3,0.64);
\draw (1.5,2.04)-- (1.5,-0.42);
\draw (-1.1,2.04)-- (-1.1,-0.42);
\draw (-1.1,-0.42)-- (1.5,-0.42);
\draw (2.3,3.14)-- (2.3,0.64);
\draw [line width=1.2000000000000002pt,color=ffqqqq] (0.42,1.9)-- (0.42,-0.32);
\draw (-1.1,2.04)-- (-0.3,3.14);
\draw [line width=1.2000000000000002pt,color=qqqqff] (-0.9,1.06)-- (1.3,1.04);
\draw [line width=1.2000000000000002pt,color=qqqqff] (1.68,1.08)-- (3.1,1.08);
\draw [line width=1.2000000000000002pt,color=qqqqff] (-2.8,1.08)-- (-1.38,1.08);
\draw [line width=1.2000000000000002pt,color=ffqqqq] (0.42,4.44)-- (0.42,2.22);
\draw [line width=1.2000000000000002pt,color=ffqqqq] (0.42,-0.62)-- (0.42,-2.1);
\draw (-1.1,-0.42)-- (-0.3,0.64);
\draw (-0.3,0.64)-- (-0.3,0.94);
\draw (-0.3,0.6400000000000001)-- (0.32,0.64);
\draw (2.3,0.64)-- (1.66,0.62);
\draw [color=ffqqqq](0.96,4.699999999999999) node[anchor=north west] {$\mathbf{\sigma}$};
\draw [color=qqqqff](3.58,1.4999999999999982) node[anchor=north west] {$\mathbf{\tau}$};
\draw (0.54,0.64)-- (1.38,0.64);
\draw (2.3,3.14)-- (0.58,3.14);
\draw (-0.3,3.14)-- (0.3,3.14);
\draw (-0.3,3.14)-- (-0.3,2.18);
\draw (-0.3,1.9)-- (-0.3,1.16);
\draw [rotate around={71.81634684428694:(8.680000000000001,1.0800000000000027)},line width=1.2000000000000002pt,color=ffqqqq] (8.680000000000001,1.0800000000000027) ellipse (3.362641784924837cm and 1.7290921819621217cm);
\draw [shift={(9.561584318351045,0.4505310420093369)}] plot[domain=2.729217254959007:5.8708099085488,variable=\t]({1.0*0.6750000000000009*cos(\t r)+-0.0*0.6750000000000009*sin(\t r)},{0.0*0.6750000000000009*cos(\t r)+1.0*0.6750000000000009*sin(\t r)});
\draw(7.72,1.44) circle (0.7cm);
\draw(6.4,2.02) circle (0.7cm);
\draw [line width=1.2000000000000002pt,color=qqqqff] (8.48,1.1)-- (12.48,-0.8);
\draw [line width=1.2000000000000002pt,color=qqqqff] (7.24,1.68)-- (8.2,1.22);
\draw [line width=1.2000000000000002pt,color=qqqqff] (5.98,2.34)-- (6.92,1.86);
\draw [shift={(10.874819385901144,0.04443425433514686)}] plot[domain=-0.40454214017280155:2.737050513416992,variable=\t]({1.0*0.6749999999999998*cos(\t r)+-0.0*0.6749999999999998*sin(\t r)},{0.0*0.6749999999999998*cos(\t r)+1.0*0.6749999999999998*sin(\t r)});
\draw [shift={(10.82487947770937,-0.14205802213968344)}] plot[domain=2.717201642179576:5.8587942957693695,variable=\t]({1.0*0.6749999999999995*cos(\t r)+-0.0*0.6749999999999995*sin(\t r)},{0.0*0.6749999999999995*cos(\t r)+1.0*0.6749999999999995*sin(\t r)});
\draw [shift={(9.674819385901145,0.6844342543351482)}] plot[domain=-0.4045421401727989:2.7370505134169942,variable=\t]({1.0*0.6750000000000009*cos(\t r)+-0.0*0.6750000000000009*sin(\t r)},{0.0*0.6750000000000009*cos(\t r)+1.0*0.6750000000000009*sin(\t r)});
\draw [line width=1.2000000000000002pt,color=qqqqff] (4.72,2.98)-- (5.66,2.5);
\draw [color=ffqqqq](10.46,4.76) node[anchor=north west] {$\mathbf{\sigma}$};
\draw [color=qqqqff](11.48,-0.9800000000000026) node[anchor=north west] {$\mathbf{\tau}$};
\end{tikzpicture}
\end{figure}
Let $D$ be a genus $5$ curve with a free action of $G = \IZ/2\times \IZ/2 = \langle \sigma, \tau\rangle$. Let $B =D/\sigma$ and $F = D/\tau$. Then the natural projections embed $D\into F\times B$.

By Example \ref{ex: C_4 type} there are exactly two topologically different such actions. A topological model for both of them is shown in Figure \ref{fig: V4 on genus 5}.

In both cases, the global extension obstruction of the appropriate $\theta$ vanishes because the ramification order is $2$ at each point. So we only need to compute the index $[\pi_1(B): \Stab_{\theta}]$.
First consider the action realised by the epimorphism
\[
\alpha_1,\alpha_2\mapsto (0,0),\beta_1\mapsto (1,0),\beta_2\mapsto (0,1).\]
Representing the surface of genus 5 as the boundary of a tubular neighbourhood of the 1-skeleton of a cube,
this free action of $\IZ/2\times \IZ/2$ is visible as the rotations by $\pi$ around the three axes. Call
$\sigma$ the rotation around the $z$-axis, $\tau$ the rotation around the $y$-axis.
Identify the curve $D$ with the surface of genus $5$ with a row of three holes coming from the West side, the Front side, and the East side, and two more holes from the North and the South. Then we can choose longitudes $l_N,l_S,l_W,l_F,l_E$, oriented from the natural orientation of the cube. The corresponding meridians $m_N,m_S,m_W,m_E$ are meridians of the corresponding four oriented edges of the back face. The meridian $m_F$ however has to be chosen to be the boundary of $2$ dimensional thickening of one of four edges parallel to the $x-$axis. The homology class of $m_F$ does not depend on the choice we made.

Once we fix a basis $\{m_N,l_N,m_S,l_S,m_W,l_W,m_F,l_F,m_E,l_E\}$ of $H_1(D;\mathbb{Z})$, we have the induced homology basis of the quotient with representing cycles obtained as images $\{m_N',l_N'/2,m_S',l_S'/2,m_W',l_W'\}$ for $H_1(B;\mathbb{Z})$ and $\{m_N'',l_N'',m_W'',l_W''/2,m_E'',l_E''/2\}$ for $H_1(F;\mathbb{Z})$.

 With respect to these bases, we can compute
\begin{align*} \pi_{\sigma}^! &= \begin{pmatrix} 
2 & 0 & &&&&-1&0&&\\
0&1&&&&&0&0&&\\
&& 2 & 0 &&&-1&0&& \\ 
&& 0 & 1 &&&0&0&& \\ 
&&&& 1 & 0 &-1&0&1&0 \\ 
&&&& 0 & 1 &0&0&0&1 \\ 
                   \end{pmatrix}^T,
                   \\
                   {\pi_{\tau}}_* & = \begin{pmatrix} 
1 & 0 &1 &0&&&0&0&&\\
0&1&0&1&&&0&-1&&\\
&& &  &1&0 &0&0&&\\ 
&&  &  &0&2 &0&-1&&\\ 
&&&&&& 0 & 0 &1&0 \\ 
&&&& &&0 & -1 &0&2 \\ 
                   \end{pmatrix}
\end{align*}
and hence we get $\iota\colon H_1(B;\mathbb{Z})\to H_1(F;\mathbb{Z}/2)$
\[\iota={\pi_{\tau}}_*{\pi_{\sigma}}^!\otimes 1 = \begin{pmatrix}2 &  &2 &&&\\
&1&&1&&\\
&& & &1& \\ 
&& &  &&2 \\ 
&& & &1& \\ 
&&&  &&2 
\end{pmatrix}
\mod 2\]
Therefore we get the index $[\pi_1(B): \Stab_{\theta}]=4$, the realisation genus $\tilde{b}=9$, $\tilde{f}=6$ and the realisation signature $\tilde{\sigma}=16$.

Now move to the second action realised by the epimorphism
\[\alpha_2,\beta_2\mapsto (0,0),\alpha_1\mapsto (1,0),\beta_1\mapsto (0,1).
\]
From Figure \ref{fig: V4 on genus 5}, we can choose a basis of $H_1(D;\mathbb{Z})$ as meridians and longitudes from the core torus, and then from four handles, precisely, the Front-left, the Front-right, the Back-left, and the Back-right. Then we have the induced bases in $H_1(B;\mathbb{Z})$ and $H_1(F;\mathbb{Z})$, which are block-wise \{Core, Front-left, Front-right\} and \{Core, Front-left, Back-left\}, respectively. With respect to these bases, we can compute 
\begin{align*}
\pi_{\sigma}^!& = \begin{pmatrix} 
1 & 0 & &&&&&&&\\
0&2&&&&&&&&\\
&& 1 & 0 &&&1&0 \\ 
&& 0 & 1 &&&0&1 \\ 
&&&& 1 & 0 &&&1&0 \\ 
&&&& 0 & 1 &&&0&1 \\ 
                   \end{pmatrix}^T, \\
                   {\pi_{\tau}}_*& = \begin{pmatrix} 
1 & 0 & &&&&&&&\\
0&2&&&&&&&&\\
&& 1 & 0 &1&0 \\ 
&& 0 & 1 &0&1 \\ 
&&&&&& 1 & 0 &1&0 \\ 
&&&& &&0 & 1 &0&1 \\ 
                   \end{pmatrix}
\end{align*}
and hence $\iota\colon H_1(B;\mathbb{Z})\to H_1(F;\mathbb{Z}/2)$
\[\iota={\pi_{\tau}}_*\pi_{\sigma}^!\otimes 1 = \begin{pmatrix}1 & 0 & &&&\\
0&4&&&\\
&& 1 & 0 &1&0 \\ 
&& 0 & 1 &0&1 \\ 
&& 1 & 0 &1&0 \\ 
&&0 & 1 &0&1 
\end{pmatrix}
\mod 2\]
Therefore we get the index $[\pi_1(B)\colon \Stab_{\theta}]=8$, the realisation genus $\tilde{b}=17, \tilde{f}=6$ and the realisation signature $\tilde{\sigma}=32$.
\end{exam}

\begin{exam}[$C_4$ type: $D_6$ symmetry on a curve of genus $5$]\label{ex: C4 type with D6 symmetry}
 \begin{figure}\caption{$D_6$ symmetry on a curve of genus $5$}
\label{fig: d6 on genus 5}
\definecolor{qqqqff}{rgb}{0.0,0.0,1.0}
\definecolor{ffxfqq}{rgb}{1.0,0.4980392156862745,0.0}
\begin{tikzpicture}[line cap=round,line join=round,>=triangle 45,x=1.0cm,y=1.0cm]
\clip(-2.72,-1.3000000000000038) rectangle (6.96,5.419999999999998);
\draw [rotate around={0.0:(1.95,2.0)},line width=1.2000000000000002pt,color=ffxfqq] (1.95,2.0) ellipse (2.7382903593571952cm and 2.0003334952323715cm);
\draw [shift={(0.08,2.0)}] plot[domain=-0.8190604466528733:0.8190604466528735,variable=\t]({1.0*2.7382903593571952*cos(\t r)+-0.0*2.7382903593571952*sin(\t r)},{0.0*2.7382903593571952*cos(\t r)+1.0*2.7382903593571952*sin(\t r)});
\draw [shift={(3.82,2.0)}] plot[domain=2.3225322069369194:3.9606531002426664,variable=\t]({1.0*2.7382903593571952*cos(\t r)+-0.0*2.7382903593571952*sin(\t r)},{0.0*2.7382903593571952*cos(\t r)+1.0*2.7382903593571952*sin(\t r)});
\draw [line width=1.2000000000000002pt,color=qqqqff] (0.7698959035178234,1.7933480840855458)-- (-2.5,0.66);
\draw [line width=1.2000000000000002pt,color=qqqqff] (-0.9784347079139911,1.1873755371666503) -- (-0.9141751793362414,1.368402366633902);
\draw [line width=1.2000000000000002pt,color=qqqqff] (-0.9784347079139911,1.1873755371666503) -- (-0.8159289171459346,1.0849457174516435);
\draw [line width=1.2000000000000002pt,color=qqqqff] (2.942886195130181,2.4182568313490727)-- (4.2,2.86);
\draw [line width=1.2000000000000002pt,color=qqqqff] (4.737035492703126,3.013379089161871)-- (6.427684679455587,3.6013462033215404);
\draw [color=ffxfqq](1.84,5.119999999999998) node[anchor=north west] {$\mathbf{\sigma}$};
\draw [color=qqqqff](-1.54,0.7999999999999969) node[anchor=north west] {$\mathbf{\tau}$};
\draw (1.52,1.479999999999997) node[anchor=north west] {$R_1$};
\draw (2.92,1.9399999999999973) node[anchor=north west] {$R_2$};
\draw [color=ffxfqq](4.92,2.5999999999999974) node[anchor=north west] {$R_3$};
\draw [color=ffxfqq](-1.6600000000000001,2.5599999999999974) node[anchor=north west] {$L_1$};
\draw (0.04,3.059999999999998) node[anchor=north west] {$L_2$};
\draw (2.98,3.739999999999998) node[anchor=north west] {$L_3$};
\draw [shift={(2.7,3.1)},dash pattern=on 1pt off 1pt]  plot[domain=0.7366563120875167:2.2398393249854704,variable=\t]({1.0*1.1610340218959991*cos(\t r)+-0.0*1.1610340218959991*sin(\t r)},{0.0*1.1610340218959991*cos(\t r)+1.0*1.1610340218959991*sin(\t r)});
\draw [shift={(1.62,2.38)},dash pattern=on 1pt off 1pt]  plot[domain=-1.4330383793695622:0.10697263946934481,variable=\t]({1.0*2.403099571080058*cos(\t r)+-0.0*2.403099571080058*sin(\t r)},{0.0*2.403099571080058*cos(\t r)+1.0*2.403099571080058*sin(\t r)});
\draw [shift={(1.24,3.0)},dash pattern=on 1pt off 1pt]  plot[domain=0.9535478242624417:2.356194490192345,variable=\t]({1.0*1.2266894886986737*cos(\t r)+-0.0*1.2266894886986737*sin(\t r)},{0.0*1.2266894886986737*cos(\t r)+1.0*1.2266894886986737*sin(\t r)});
\draw [shift={(2.16,2.3)},dash pattern=on 1pt off 1pt]  plot[domain=3.5782198134033343:4.609809127966395,variable=\t]({1.0*2.3174123500145587*cos(\t r)+-0.0*2.3174123500145587*sin(\t r)},{0.0*2.3174123500145587*cos(\t r)+1.0*2.3174123500145587*sin(\t r)});
\draw [shift={(2.02,2.38)},dash pattern=on 1pt off 1pt]  plot[domain=2.743964662067664:3.4224303782109957,variable=\t]({1.0*2.169239498073*cos(\t r)+-0.0*2.169239498073*sin(\t r)},{0.0*2.169239498073*cos(\t r)+1.0*2.169239498073*sin(\t r)});
\draw [shift={(2.08,2.56)},dash pattern=on 1pt off 1pt]  plot[domain=0.21182754748141744:0.4521538622857757,variable=\t]({1.0*1.9025246384738355*cos(\t r)+-0.0*1.9025246384738355*sin(\t r)},{0.0*1.9025246384738355*cos(\t r)+1.0*1.9025246384738355*sin(\t r)});
\end{tikzpicture}
\end{figure}
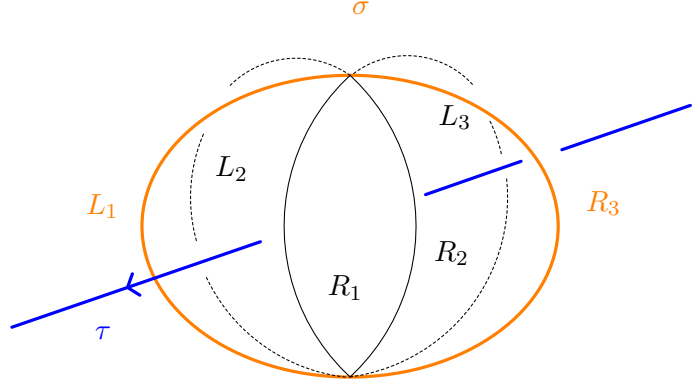
Let $D$ be a genus $5$ curve with an action of dihedral group $G=D_6=\langle\sigma,\tau\rangle$ realised by the equivalence class of an epimorphism \[\langle\alpha,\beta,\gamma\mid[\alpha,\beta]\gamma\rangle\to D_6,\quad(\alpha,\beta;\gamma)\mapsto(\sigma,\tau;(\sigma\tau)^4).\] 
We can find its topological model as in Figure \ref{fig: d6 on genus 5}. Consider a graph $\Gamma$ in $\mathbb{R}^3$ consisting of $S^2\cap\{y+x=0\}$ in orange and $S^2\cap\{x\cdot(y-x)=0\}$ in black. Now realise a surface of genus $5$ as the smooth boundary of a thin regular neighbourhood of $\Gamma$ in $\mathbb{R}^3$. We take a free involution $\sigma$ as the rotation by $\pi$ about the circle in orange, and another free involution $\tau$ as the rotation by $\pi$ about the $x$- axis in blue. Then they generate the dihedral group $D_6$ because $\sigma\tau$ has order $6$. 

Let $B=D/\sigma$ and $F=D/\tau$.
From Figure \ref{fig: d6 on genus 5}, we can choose a basis of $H_1(D;\mathbb{Z})$ as meridians and longitudes of $\{L_1R_1,L_1R_2,L_1R_3,L_1L_3,L_1L_2\}$ as in Example \ref{ex: double bisection in product of genus 2}. Then we have the induced bases of $H_1(B;\mathbb{Z})$ and $H_1(F;\mathbb{Z})$, which are block-wise $\{L_1R_1,L_1R_2,L_1R_3\}$ and  $\{R_1,R_2,R_3\}$, respectively. With respect to these bases, 
we can compute
\[
 \pi_{\sigma}^!   = \begin{pmatrix} 
1 & 0 & &&&&&&1&0\\
0&1&&&&&&&0&1\\
&& 1 & 0 &&&1&0 \\ 
&& 0 & 1 &&&0&1 \\ 
&&&& 1 & 0 &&&& \\ 
&&&& 0 & 2 &&&& \\ 
                   \end{pmatrix}^T,
                   \]
                   \[
                   {\pi_{\tau}}_* = \begin{pmatrix} 
1 & 0 & 0&&0&&0&&0&\\
0&2&&1&&1&&1&&1\\
&& 1 & 0 &&&&&-1&0 \\ 
&& 0 & 1 &&&&&0&-1 \\ 
&&&& 1 & 0 &-1&0&& \\ 
&&&&0 & 1 &0&-1&& \\ 
                   \end{pmatrix}
\]
and hence $\iota\colon H_1(B;\mathbb{Z})\to H_1(F;\mathbb{Z}/2)$ is given by 
\[\iota={\pi_{\tau}}_*\pi_{\sigma}^!\otimes 1 = \begin{pmatrix}1 & 0 & &&&\\
0&3&&2&&2\\
1&0& 1 & 0 && \\ 
0&1& 0 & 1 && \\ 
&& 1 & 0 &1&0 \\ 
&&0 & 1 &0&2 
\end{pmatrix}
\mod 2.\]
Therefore we get the index $[\pi_1(B):\Stab_{\theta}]=32$, the realisation genus $\tilde{b}=65$ and the realisation signature $\tilde{\sigma}=128$.
\end{exam} 

\begin{exam}[four disjoint graphs in a product of genus 3 curves]\label{ex: four disjoint graphs}
In Example \ref{ex: four graphs of autos on genus 3} we constructed a virtual Kodaira fibration $(B\times F, \Gamma_{id}\cup\Gamma_{\sigma}\cup\Gamma_{\tau}\cup\Gamma_{\sigma\tau}, \theta)$, where $B=F$ is a curve of genus $3$ with the automorphism group $D_4=\langle\sigma,\tau\rangle$ such that each of $\sigma, \tau, \sigma\tau$ acts freely and $\theta$ defines a $2$-fold cyclic cover of $F$ branched over four points. First, we can observe the global extension obstruction $o(\theta)$ vanishes in $\mathbb{Z}/2$, and then
using a topological model for such a $D_4$ action in Example \ref{ex: double bisection in product of genus 2}, we can compute $\iota\colon H_1(B;\mathbb{Z})\to H_1(F;\mathbb{Z}/2)$. 

With respect to the basis of $H_1(B;\mathbb{Z})=H_1(F;\mathbb{Z})$ chosen as meridians and longitudes 
of $\{L_1R_1,L_1R_2,L_1L_2\}$ as in Example \ref{ex: double bisection in product of genus 2}, 
\[\sigma_* = \begin{pmatrix} &  & &&1&0\\
&&&&0&1\\
&& 1 & 0 && \\ 
&& 0 & 1 && \\ 
1&0&  &  && \\ 
0&1&&  && 
\end{pmatrix},\quad \tau_*= \begin{pmatrix} 1& &0 &&0&\\
&1&&1&&1\\
1&&  &  &-1& \\ 
&0&  &  && -1\\ 
1&&-1&  && \\ 
&0&&-1&& 
\end{pmatrix}\]
\[(\sigma\tau)_*=\begin{pmatrix}  1& &-1 &&&\\
&0&&-1&&\\
1&&  &  &-1& \\ 
&0&  &  && -1\\ 
1&&0&  &0& \\ 
&1&&1&&1 
\end{pmatrix}\]

Hence, $\iota\colon H_1(B;\mathbb{Z})\to H_1(F;\mathbb{Z}/2)$ is given by \[ \iota=id+\sigma_*+\tau_*+(\sigma\tau)_*=\begin{pmatrix}
1&&1&&1&\\
&0&&0&&0\\
0&&0&&0&\\
&0&&0&&0\\
1&&1&&1&\\
&0&&0&&0
\end{pmatrix}\mod 2
\]
\end{exam}
Therefore, the degree of the minimal pullback is $2$ and the realisation signature $\tilde\sigma=16$.

\appendix
\section{Automorphisms without fixed points on curves of small genus}\label{sect: fpf autos}
In this section we classify automorphisms without fixed points on curves  of genus up to nine. 
three cad
Let us fix some notation for this section: let $B$ be a curve of genus $2\leq b\leq 9$ and assume that $\phi\in \Aut B$ acts on $B$ without fixed points. Let $d$ be the order of $\phi$ and let  $q$ be the genus of the quotient curve $B/\langle \phi\rangle$. 
We now first classify the possible ramification types of the $\IZ/d$-covers $B\to B/\langle \phi\rangle $ and then proceed to classify topological types.

\begin{prop}\label{prop: ramification fpf autos} The ramification types of a fixed-point-free automorphisms $\phi$ of order $d$ on a curve of genus $b\leq 9$ are exactly the following:
\begin{center}
 \begin{tabular}{ccc}
 \toprule
 genus $b$ & $d=\ord \phi$ & ramification type\\
 \midrule
$9$ & $2$ & $(5\mid  -)$\\
$9$ & $4$ & $(3\mid  -)$\\
$9$ & $8$ & $(2\mid  -)$\\
$9$& $4$ & $(2\mid  2^4)$ \\
$9$ & $16$ & $(1\mid  2^2)$\\
$9$ & $12$ & $(1\mid 3^2)$\\
$9$ & $10$ & $(1\mid 5^2)$\\
$9$ & $8$ & $(1\mid 2^4)$\\
$9$ & $8$ & $(1\mid 2,4^2)$\\ 
$9$ & $6$ & $(1\mid 3^4)$\\
$9$ & $4$ & $(1\mid  2^8)$  \\
$9$ & $12$ & $(0 \mid 2,3^2, 4^2)$\\
$9$&$10$ & $(0\mid 2^4, 5^2)$\\
$9$&$6$ & $(0\mid2^4, 3^4)$\\
\midrule
$8$ & $7$ & $(2\mid -)$\\
$8$ & $14$ & $(1\mid 2^2)$\\
$8$ & $6$ & $(1\mid  2^2, 3^2)$\\
$8$&$18$ & $(0\mid 2^2, 9^2)$\\
$8$&$15$ & $(0\mid 3^2, 5^2)$\\
$8$&$12$ & $(0\mid 4^2, 6^2)$\\
$8$&$10$ & $(0\mid 2^2,5^3)$\\
$8$&$6$ & $(0\mid 2^2,3^5)$\\
$8$&$6$ & $(0\mid 2^6, 3^2)$\\
\midrule
$6$ & $5$ & $(2\mid -)$\\
$6$ & $10$ & $(1\mid 2^2)$\\
$6$&$14$ & $(0\mid 2^2, 7^2)$\\
$6$&$12$ & $(0\mid 4^2, 3^2)$\\
$6$&$6$ & $(0\mid 2^2, 3^4)$\\
\bottomrule
\end{tabular}
\quad
 \begin{tabular}{ccc}
 \toprule
 genus $b$ & $d=\ord\phi$ & ramification type\\
\midrule
$7$ & $2$ & $(4\mid -)$\\
$7$ & $3$ & $(3\mid -)$\\
$7$ & $6$ & $(2\mid -)$\\
$7$& $4$ & $(2\mid  2^2)$  \\
$7$ & $12$ & $(1\mid 2^2)$ \\
$7$ & $9$ & $(1\mid  3^2)$\\
$7$ & $8$ & $(1\mid  4^2)$\\
$7$ & $6$ & $(1\mid 2^4)$\\
$7$ & $6$ & $(1\mid 3^3)$\\
$7$ & $4$ & $(1\mid  2^6)$  \\
$7$ & $12$ & $(0 \mid 3, 4^2, 6)$\\
$7$ & $6$ & $(0 \mid 2^4, 3^3)$\\
\midrule
$5$ & $2$ & $(3\mid -)$\\
$5$ & $4$ & $(2\mid -)$\\
$5$ & $8$ & $(1\mid  2^2)$\\
$5$ & $6$ & $(1\mid 3^2)$\\
$5$ & $4$ & $(1\mid  2^4)$  \\
$5$&$6$ & $(0\mid 2^4, 3^2)$\\
\midrule
$4$ & $3$ & $(2\mid -)$\\
$4$ & $6$ & $(1\mid 2^2)$\\
$4$&$6$ & $(0\mid 2^2,3^3)$\\
$4$&$10$ & $(0\mid 2^2, 5^2)$\\
\midrule
$3$ & $2$ & $(2\mid -)$\\
$3$ & $4$ & $(1\mid  2^2)$  \\
\midrule
$2$&$6$ & $(0\mid 2^2, 3^2)$\\
\bottomrule
 \end{tabular}
\end{center}
\end{prop}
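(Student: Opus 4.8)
The plan is to translate the problem into the language of cyclic branched covers and then combine a Riemann--Hurwitz enumeration with the classical existence criterion for cyclic group actions. Since $\phi$ generates a cyclic group $\langle\phi\rangle\cong\IZ/d$ acting on $B$, the quotient map $\pi\colon B\to B/\langle\phi\rangle$ is a Galois cover with group $\IZ/d$ and base genus $q$. The stabiliser of a point $p\in B$ is the unique subgroup of $\IZ/d$ whose order equals the ramification index $r$ at $p$, so $r\mid d$ automatically. The key observation is that $\phi$ fixes $p$ if and only if $\phi$ lies in this stabiliser, which happens precisely when the stabiliser is all of $\IZ/d$; since a proper subgroup of a cyclic group never contains a generator, $\phi$ acts freely if and only if every ramification index is a \emph{proper} divisor of $d$, i.e.\ $r_i<d$ for all $i$. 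Recording the ramification type as $(q\mid r_1,\dots,r_m)$, Riemann--Hurwitz reads
\[ 2b-2 = d\Bigl(2q-2+\sum_{i=1}^m\bigl(1-\tfrac1{r_i}\bigr)\Bigr), \]
and the right-hand bracket is positive because $b\geq 2$ and $d\geq 2$.

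First I would carry out the numerical enumeration. Using $2\le b\le 9$, hence $2\le 2b-2\le 16$, together with $d\geq 2$, $r_i\mid d$, $r_i<d$ and positivity of the bracket, one bounds $d$, the number $m$ of branch points and the indices $r_i$. It is convenient to split into three cases according to $q$: if $q\geq 2$ the cover is \'etale ($m=0$) and $d(2q-2)=2b-2$ leaves only finitely many possibilities; if $q=1$ one needs $m\geq2$ and $d\sum(1-1/r_i)=2b-2$; and if $q=0$ one needs $\sum(1-1/r_i)>2$, forcing $m\geq3$. In each case this produces a finite, explicit list of numerically admissible tuples $(b,d,q,r_1,\dots,r_m)$; this step is mechanical albeit tedious.

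The decisive step is then to decide, for each numerical candidate, whether a free action with that ramification type actually exists. A connected $\IZ/d$-cover of $B/\langle\phi\rangle$ corresponds to a surjection from the orbifold fundamental group, i.e.\ to elements $c_1,\dots,c_m\in\IZ/d$ with $\ord(c_i)=r_i$ satisfying $\sum_i c_i=0$ (the commutators from the handles die in the abelian group), together with lifts $a_j,b_j$ of the genus-$q$ generators. Existence is automatic in the \'etale case $q\geq2$; for $q\geq1$ surjectivity is free (take $a_1$ a generator), while for $q=0$ it forces $\operatorname{lcm}(r_1,\dots,r_m)=d$. Decomposing $\IZ/d$ prime by prime, the relation $\sum c_i=0$ is solvable with the prescribed orders precisely when, for every prime $p\mid\operatorname{lcm}(r_i)$, the maximal $p$-adic valuation among the $r_i$ is attained at least twice if $p$ is odd, and an \emph{even} number of times if $p=2$ --- the latter being the familiar obstruction that an odd number of order-$2$ elements cannot sum to zero in a $2$-group. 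This is exactly Harvey's existence criterion for cyclic actions (see \cite{breuer, Broughton:91}). Applying it prunes the numerical list: for instance $(1\mid 2^3)$ is excluded by the parity condition, and over $q=0$ many tuples fail $\operatorname{lcm}(r_i)=d$. The freeness requirement $r_i<d$ has already been built into the enumeration, so nothing further is needed there. I expect the main obstacle to be precisely this existence/pruning step --- getting the $p=2$ parity condition right and then verifying that the surviving tuples are \emph{exactly} the entries in the table, with none missed and none spurious.
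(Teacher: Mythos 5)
Your overall strategy is the same as the paper's: reduce to a surjection from the orbifold fundamental group of the quotient onto $\IZ/d$, observe that $\phi$ is free exactly when every local stabiliser is a proper subgroup (equivalently $2\le r_i<d$), run Riemann--Hurwitz to get a finite list, and then prune it by the group-theoretic solvability of $\sum c_i=0$ with prescribed orders (your Harvey-style criterion, including the parity condition at $p=2$, is exactly what the paper implements by hand via the prime-power projections in Steps 4--6, e.g.\ ``three elements of order $2$ in $\IZ/4$ cannot sum to $0$'').

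However, there is one concrete error that would make your list come out wrong: the claim that ``if $q\ge 2$ the cover is \'etale ($m=0$)'' is false. Riemann--Hurwitz with $q=2$ reads $16\ge 2b-2=d\bigl(2+\sum_i(1-\tfrac1{r_i})\bigr)$, and since $d$ must be composite (hence $d\ge 4$) in the ramified case, the only surviving value is $d=4$ with all $r_i=2$; this admits $m=2$ (giving $b=7$) and $m=4$ (giving $b=9$), while $m=3$ is killed by your own parity condition. These are precisely the table entries $(2\mid 2^2)$ for $b=7$, $d=4$ and $(2\mid 2^4)$ for $b=9$, $d=4$, which your case split would silently omit. (It is only for $q\ge 3$ that ramification is numerically impossible, since then $d\bigl(2q-2+\sum(1-\tfrac1{r_i})\bigr)\ge 4\cdot(4+1)>16$.) The paper devotes its Step 2 exactly to this $q=2$ ramified case; you need to restore it. With that correction, the rest of your argument goes through and reproduces the paper's proof.
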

We split the proof in several Lemmas. The first step is a simple application of the Hurwitz formula.
\begin{lem}
 If $B\to B/\langle\phi\rangle$ is \'etale then $(b-1) = d(q-1)$, which gives the unramified cases listed in the table.
\end{lem}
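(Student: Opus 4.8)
The plan is to read the relation off directly from the Riemann--Hurwitz formula and then to convert the resulting Diophantine constraint into the enumeration recorded in the table. Since $\phi$ generates a cyclic group of order $d$, the quotient map $g\colon B\to B/\langle\phi\rangle$ is a degree $d$ cover, and Riemann--Hurwitz reads
\[
2b-2 \;=\; d\,(2q-2) \;+\; \sum_{P}(e_P-1),
\]
the sum running over the ramification points $P$ with ramification indices $e_P$. If $g$ is \'etale the sum is empty, so dividing by $2$ yields exactly $(b-1)=d(q-1)$.

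First I would use this identity to list the numerical possibilities. It forces $d\mid b-1$ and $q=1+(b-1)/d$; since $\phi$ is a nontrivial automorphism we have $d\ge 2$, and since $b-1\ge 1$ the quotient $q-1=(b-1)/d$ is a positive integer, so automatically $q\ge 2$. Running through $2\le b\le 9$ and taking each divisor $d\ge 2$ of $b-1$ then produces precisely the rows of ramification type $(q\mid -)$: for example $b-1=8$ gives $(d,q)\in\{(2,5),(4,3),(8,2)\}$, matching the three unramified entries in genus $9$, whereas $b-1=1$ (genus $2$) admits no admissible $d$ and so contributes no \'etale case.

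It then remains to check that every one of these numerical tuples is genuinely realised, so that the table records actual automorphisms and not merely admissible data. This is immediate from the classification of cyclic \'etale covers: a curve $C$ of genus $q\ge 2$ has $H_1(C;\mathbb{Z})\cong\mathbb{Z}^{2q}$ surjecting onto $\mathbb{Z}/d$ for every $d$, and any such surjection defines a connected unramified cyclic cover $B\to C$ of degree $d$ whose deck group $\mathbb{Z}/d$ acts freely; the generator is then a fixed-point-free automorphism of $B$ of order $d$ with quotient of genus $q$.

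I do not expect a serious obstacle here, as the entire statement is a one-line consequence of Riemann--Hurwitz. The only point deserving a word of justification is existence, which the remark above supplies at once; the subsequent (ramified) cases of Proposition~\ref{prop: ramification fpf autos}, where the branch contribution $\sum_P(e_P-1)$ is nonempty and must be analysed divisor by divisor, are where the real work lies.
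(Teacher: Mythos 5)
Your proposal is correct and follows the same route as the paper, which simply invokes the Hurwitz formula for the unramified degree-$d$ cover $B\to B/\langle\phi\rangle$ to obtain $(b-1)=d(q-1)$ and then enumerates divisors of $b-1$ for $2\le b\le 9$. Your additional paragraph verifying that each numerical tuple is actually realised (via a surjection $H_1(C;\ZZ)\to\ZZ/d$ defining a connected free cyclic cover) is a worthwhile supplement that the paper leaves implicit, but it does not change the argument.
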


We now need to consider the ramified case. The ramified cover $B\to B/\langle\phi\rangle$ corresponds to a surjection
\[\eta\colon\pi_1(B/G\setminus \{P_1, \dots, P_m\}) = \langle \alpha_1, \dots , \beta_q, \gamma_1, \dots, \gamma_m\mid \prod [\alpha_i, \beta_i]\prod \gamma_i = 1\rangle \onto\IZ/d,\]
where $m$ is the number of branch points. Denoting   $a_i = \eta(\gamma_i)$ the surjection $\eta$ gives a tuple of elements $\underline a = (a_1, \dots, a_m)\in \left(\IZ/d\setminus\{0\}\right)^m$ such that $\sum_{i=1}^m a_i =0$ (in particular $m\geq 2$). The ramification properties of $\phi$ are encoded in the $a_i$ and thus we can now  reduce the classification of fixed-point-free automorphisms to tuples with certain properties. We are mainly interested in the ramification type of the cover induced by $\eta$, that is, in the data $(q\mid \ord a_1, \dots, \ord a_m)$.

\begin{lem}\label{lem: tuples for auto} Let $d, m\geq2$ and $q\geq 0$ be integers and  $\underline a = (a_1, \dots, a_m)\in \left(\IZ/d\setminus\{0\}\right)^m$ such that $\sum_{i=1}^m a_i =0$.
\begin{enumerate}
\item There exists a surjection $\eta$ giving rise to the tuple $\underline a$ if either $q>0$ or $q=0$ and the $a_i$ generate $\IZ/d$.
\item Assume \refenum{i} holds and let $H_i  = \langle a_i\rangle$. Then a generator of $\IZ/d$ corresponds to a fixed-point-free automorphism $\phi$ on the ramified cover induced by $\eta$ if and only if for all $i$ we have $H_i\subsetneq \IZ/d$, that is, $1<\ord a_i <d$.
\end{enumerate}
\end{lem}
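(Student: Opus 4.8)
**

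The plan is to prove the two assertions of Lemma \ref{lem: tuples for auto} separately, both being exercises in the covering-space/Riemann-existence correspondence between finite cyclic quotients and surjections out of orbifold fundamental groups.

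For part \refenum{i}, I would argue directly in terms of the existence of a surjection $\eta\colon\pi_1(B/G\setminus\{P_1,\dots,P_m\})\onto\IZ/d$ taking $\gamma_i\mapsto a_i$. Such a homomorphism is well-defined on the free group on $\alpha_1,\dots,\beta_q,\gamma_1,\dots,\gamma_m$ once we prescribe the images of the generators, and it descends to the orbifold fundamental group precisely because the single relation $\prod[\alpha_i,\beta_i]\prod\gamma_i=1$ maps to $\sum a_i=0$ in the abelian group $\IZ/d$ (the commutators die in $\IZ/d$). So the only issue is surjectivity. If $q=0$ then the $\alpha_i,\beta_i$ are absent and the image is exactly $\langle a_1,\dots,a_m\rangle$, so surjectivity is equivalent to the stated hypothesis that the $a_i$ generate $\IZ/d$. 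If $q>0$, then I have at least one handle generator, say $\alpha_1$, which I may send to a generator $1\in\IZ/d$; this forces surjectivity regardless of the $a_i$, and the relation is still respected since I am free to adjust $\beta_1$ (e.g.\ send it to $0$). This settles \refenum{i}.

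For part \refenum{ii}, the key point is the standard description of the local monodromy of a branched $\IZ/d$-cover: near the branch point $P_i$, a small loop $\gamma_i$ lifts to the cover, and the ramification behaviour is governed by the cyclic subgroup $H_i=\langle a_i\rangle$. Concretely, the preimage of $P_i$ consists of $[\IZ/d:H_i]=d/\ord a_i$ points, each of which is a ramification point with stabiliser $H_i$ of order $\ord a_i$. A generator $\phi$ of $\IZ/d$ fixes a point of the cover lying over $P_i$ if and only if $\phi$ lies in the stabiliser $H_i$ of that point, i.e.\ if and only if $\langle\phi\rangle=\IZ/d\subseteq H_i$, which is to say $H_i=\IZ/d$, equivalently $\ord a_i=d$. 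Since $\phi$ can only have fixed points lying over the branch locus (away from it the cover is unramified and the $\IZ/d$-action is free), $\phi$ is fixed-point-free precisely when $H_i\subsetneq\IZ/d$ for every $i$; combined with $a_i\neq0$ (so $H_i\neq\{0\}$, i.e.\ $\ord a_i>1$) this gives the stated condition $1<\ord a_i<d$.

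The content of the lemma is almost entirely bookkeeping once the dictionary between surjections and covers is in place, so I do not anticipate a genuine obstacle; the one place demanding a little care is the fixed-point analysis in \refenum{ii}, where I must check that no fixed points are hidden \emph{away} from the branch points and correctly identify the stabiliser of a point over $P_i$ as $H_i$ rather than some conjugate (harmless here since $\IZ/d$ is abelian). I would phrase this via the orbit–stabiliser count $d=(d/\ord a_i)\cdot\ord a_i$ for the fibre over $P_i$, which simultaneously confirms that the number of preimages is $d/\ord a_i$ and that each has stabiliser of order exactly $\ord a_i$, making the equivalence transparent.
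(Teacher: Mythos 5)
Your proposal is correct and follows essentially the same route as the paper: part \refenum{i} by prescribing images of generators and checking the single surface relation maps to $\sum_i a_i=0$ (with $\alpha_1\mapsto 1$ forcing surjectivity when $q>0$), and part \refenum{ii} by identifying the stabiliser of a point over $P_i$ with $H_i=\langle a_i\rangle$ so that a generator has a fixed point iff some $H_i=\IZ/d$. The paper states \refenum{ii} in one line; your orbit--stabiliser elaboration is a harmless expansion of the same idea.
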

\begin{proof}
 For the first item note that since $\IZ/d$ is abelian we can prescribe arbitrary images for $\alpha_i$ and $\beta_i$ to define $\eta$ as long as $\sum_i a_i = \eta(\prod \gamma_i) = 0$. If $q>0$ then setting $\eta(\alpha_1)=1$ makes sure that $\eta$ is a surjection. If $q=0$ then the image of $\eta$ is the subgroup generated by the $a_i$ and thus the claim follows.
 
 Considering the elements of $\IZ/d$ as automorphisms of the ramified cover, an $a$  has fixed points if and only if $a\in H_i$, which gives the second item.
\end{proof}

\begin{proof}[Proof of Proposition \ref{prop: ramification fpf autos}.]  We are now ready to complete the classification of possible ramification types by studying the existence of data $d, m, q, \underline a$ satisfying the conditions of Lemma \ref{lem: tuples for auto}. 
\begin{description}[style = nextline]
 \item[Step 1 --- Numerical restrictions] Let us record some direct consequences of Lemma \ref{lem: tuples for auto}:
 \begin{gather}
   \ord a_i |d \text{ and } 2\leq \ord a_i < d \label{eq: order ai}\\
   d \text{ is not prime and thus } d\geq 4 \label{eq: d not prime}  \\
   m \geq2 \text{ and } m=2 \implies a_1 = -a_2 \label{eq: m=2}
 \end{gather}
 To limit the number of cases to consider we compute  $b = g(B)\leq 9$ via the Hurwitz formula:
 \begin{equation}\label{eq: Hurwitz}
  16\geq 2b-2 = d \left(2q-2 + \sum_i \frac {\ord a_i - 1}{\ord a_i}\right).
 \end{equation}
 Substituting the limit cases of \eqref{eq: order ai} and \eqref{eq: d not prime} and \eqref{eq: m=2} we get
 \begin{equation}\label{eq: rough estimate} 16 \geq d\left(2q-2+\frac m2\right)\geq d\left(2q-1\right)\geq 8q-4 \end{equation}
 and thus $q\leq 2$ and $d\leq 16$ unless $q=0$.
\item[Step 2 --- $q=2$]
 If $q=2$ then \eqref{eq: rough estimate}  and \eqref{eq: d not prime} give $d=4$ and thus by \eqref{eq: order ai} $\ord a_i = 2$. Now \eqref{eq: Hurwitz} becomes
 \[ 16 \geq 4\left(2+\frac m2\right)\]
 which leaves the possibilities $m=2$ and $m=4$, because three elements of order $2$ in $\IZ/4$ cannot sum up to $0$.
\item[Step 3 --- $q=1$]
If $q=1$ then $d\in \{4,6,8,9,10,12,14,15,16\}$ by \eqref{eq: d not prime} and \eqref{eq: rough estimate} and all cases can be treated in a similar fashion.

Let us exemplify this for $d=16$: the ramification order can only be $2, 4, 8$ thus \eqref{eq: Hurwitz} becomes
 \[16 \geq  16\left(\frac {m_2}2+ \frac{3m_4}{4}+\frac{7m_8}8\right) = 8m_2+12m_4+14m_8\]
 where $m_r$ denotes the number of elements $a_j$ of order $r$. Since $m\geq 2$ there is only one case. 
 \item[Step 4 --- Preparations for $q=0$]
If $q=0$ then by Lemma \ref{lem: tuples for auto} the elements $a_i$ generate the group $\IZ/d$. Note that if $d=p^\nu$ is a prime power and $\IZ/d$ is generated by $a_1, \dots, a_m$ then at at least one of the $a_i$ maps to a generator. In particular, if $q=0$ then $d$ cannot be a prime power by \eqref{eq: order ai}.  

Consider a decomposition in primes $d =\prod_{i=1}^k p_j^{\nu_j}$ and for each $j$ the projection $\pi_j\colon \IZ/d \onto \IZ/p_j^{\nu_j}$. Since the relations $\sum_i a_i $ remains valid after projection at least two of the $a_i$ map to a generator of $\IZ/p_j^{\nu_j}$.  On the other hand, no $a_i$ can map to a generator under all projections, because in this case it would be a generator for the whole group $\IZ/d$. 

With this information we will now distinguish the cases according to the prime decomposition of $d$.

\item[Step 5 ---  $q=0$, $d = p_1^{\nu_1}p_2^{\nu_2}$]
If $d$ is the product of two prime powers then by Step 4 we have  $m\geq 4$ and we can bound \eqref{eq: Hurwitz} from below by the case where $m=4$ and $\ord a_1 = \ord a_2 = p_1^{\nu_1}$ and $\ord a_3 = \ord a_4 = p_2^{\nu_2}$. This gives 
\[16 \geq p_1^{\nu_1}p_2^{\nu_2}\left(-2+ 2\frac{p_1^{\nu_1}-1}{p_1^{\nu_1}}+ 2\frac{p_2^{\nu_2}-1}{p_2^{\nu_2}}\right) \iff 9\geq (p_1^{\nu_1}-1)(p_2^{\nu_2}-1),\]
which implies $d\in \{6,10,12,14,15,18\}$. We only treat the case $d=18$; the computations are similar in the other cases.

Again we denote by $m_i$ the number elements of order $i$. By \eqref{eq: order ai} we can write  \eqref{eq: Hurwitz} as
 \begin{align*}
 & 16 \geq  18\left(-2+\frac {m_2}2+ \frac{2m_3}{3}+ \frac{5m_6}{6}+\frac{8m_9}9\right) \\
\iff\qquad & 52 \geq 9m_2+12m_3+15m_6+16m_9
\end{align*}
By Step 4 we have (at least) two elements of order divisible by $9$ and two elements of order divisible by $2$ and thus we have the further restrictions
\[m_9\geq 2\text{ and } m_2+m_6 \geq 2.\]
This leaves $m_9 = m_2 = 2$ and $m_3 = m_6 = 0$ as the only possibility.
\item[Step 6 ---  $q=0$ and $d$ has at least three different  prime divisors]
Of all cases in which the decomposition $d =\prod_{i=1}^k p_j^{\nu_j}$ contains at least three different primes, the smallest possible value for $b$ in  \eqref{eq: Hurwitz} occurs if the primes, their multiplicities and the number of ramification points is as low as possible.  Thus the minimal possibility is $\IZ/d = \IZ/2\times \IZ/3\times \IZ/5$ and $m=3$. Then up to isomorphism the only choice is $a_1 = (1,1,0)$, $a_2 = (1,0,1)$, $(0,-1,-1)$, which by \eqref{eq: Hurwitz} leads to a curve of genus $11$. Therefore we can exclude this case altogether since we are interested in curves of genus at most $9$.
\end{description}
\end{proof}
The topological classification of finite order automorphisms of topological surfaces was studied by Nielsen in \cite{nielsen}: let $\phi$  be an automorphism of $B$ of order $d$ and ramification data $(q\mid r_1, \dots, r_m)$.  The quotient map   $B\to B/\langle\phi\rangle$ induces an exact sequence on orbifold fundamental groups 
\[ 1\to \pi_1(B) \to \pi_1^{\text{orb}}(B/\phi; r_1,\dots,r_m)\overset{\rho}\to \langle\phi\rangle \isom \IZ/d \to 0,
 \]
where
\[\pi_1^{\text{orb}}(B/\phi; r_1,\dots,r_m)= \left\langle \alpha_1, \dots, \beta_q, \gamma_1, \dots, \gamma_m \mid \textstyle\prod_{i=1}^q[\alpha_{i}, \beta_i]\textstyle\prod_{j=1}^m \gamma_j, \gamma_1^{r_1}, \dots, \gamma_m^{r_m}\right\rangle.\]
We call $(\rho(\gamma_1), \dots, \rho(\gamma_m))$ the ramification tuple of $\phi$; the Nielsen type is $(n_a)_{a\in \IZ/d}$ where $n_a$ is the number of $\gamma_j$ such that $\rho(\gamma_j) = a$.

Then the main result \cite[\"Aquivalenzsatz]{nielsen} can be formulated as follows: two automorphisms of the same order with the same ramification data are topologically equivalent if and only if they have the same Nielsen type. 

\begin{rem}
 The main step in \cite{nielsen} is to show that if $q>0$ then one can arrange that $\rho(\alpha_1) = 1$ and $\rho(\beta_1) = \rho(\alpha_2) = \dots = \rho(\beta_q) = 0$. He also goes on to show \cite[Equation~14.6]{nielsen} that the action of $\phi$ on $H_1(B;\IZ)$ has characteristic polynomial 
 \[\frac{(x^d-1)^{2q-2+m}(x-1)^2}{ (x^{n/r_1}-1)(x^{n/r_2}-1)\dots(x^{n/r_m}-1)}.\]
 This information is however not enough for the calculations of Section \ref{sect: computing}, because we reduce to torsion coefficients. 
\end{rem}

In applications, we are interested in the topological classification of configurations of two disjoint graphs of automorphism $\Gamma_{\phi_1}\cup \Gamma_{\phi_2}\subset B\times B$. Precomposing with one of the $\phi_i$ we can normalise one of the graphs to be the identity and the other one to be the graph of the fixed-point free automorphism $(\phi_1\circ \inverse\phi_2)^{\pm 1}$. Thus the classification of such configurations corresponds to the classification of topological types of fixed-point free automorphism up to taking the inverse, which gives the obvious action on Nielsen types. 

For each ramification type  in Proposition \ref{prop: ramification fpf autos} the possible Nielsen types can be easily analysed yielding the following result.
\begin{prop}\label{prop: fpf autos}
The ramification types of Prop.\ref{prop: ramification fpf autos} are uniquely realized by a Nielsen
type except for the following cases:
\begin{center}
 \begin{tabular}{cccl}
 \toprule
 genus $b$ & $d=\ord \phi$ & ramification type &ramification tuple \\
 \midrule
$9$ & $10$ & $(1\mid 5^2)$ & $(2,8)$, $(4,6)$\\
$9$ & $8$ & $(1\mid 2,4^2)$ & $(4,2,2)\sim(4,6,6)$\\ 
$9$ & $12$ & $(0 \mid 2,3^2, 4^2)$ & $(6,4,8,3,3)\sim(6,4,8,9,9)$\\
$9$&$10$ & $(0\mid 2^4, 5^2)$ & $(5,5,5,5,2,8)$, $(5,5,5,5,4,6)$\\
\midrule
$8$&$18$ & $(0\mid 2^2, 9^2)$ & $(9,9,2,16)$, $(9,9,4,14)$, $(9,9,8,10)$\\
$8$&$15$ & $(0\mid 3^2, 5^2)$ & $(5,10,3,12)$, $(5,10,6,9)$\\
$8$&$10$ & $(0\mid 2^2,5^3)$ & $(5,5,2,2,6)\sim(5,5,8,8,4)$, \\
& & & $(5,5,2,4,4)\sim(5,5,8,6,6)$\\
$8$&$6$ & $(0\mid 2^2,3^5)$ & $(3,3,2,2,2,2,4)\sim(3,3,2,4,4,4,4)$\\
\midrule
$7$ & $6$ & $(1\mid 3^3)$ & $(2,2,2)\sim(4,4,4)$\\
& & & $(4,9,9,2)\sim(8,3,3,10)$\\
$7$ & $6$ & $(0 \mid 2^4, 3^3)$ & $(3,3,3,3,2,2,2)\sim(3,3,3,3,4,4,4)$\\
\midrule
$6$&$14$ & $(0\mid 2^2, 7^2)$ & $(7,7,2,12)$, $(7,7,4,10)$, $(7,7,6,8)$\\
\midrule
$4$&$6$ & $(0\mid 2^2,3^3)$ & $(3,3,2,2,2)\sim(3,3,4,4,4)$\\
$4$&$10$ & $(0\mid 2^2, 5^2)$ & $(5,5,2,8)$, $(5,5,4,6)$\\
\bottomrule
 \end{tabular}
\end{center}
Ramification tuples related by $\sim$ correspond to topologically equivalent configurations of pairs
of automorphisms with disjoint graphs $\Gamma_\id \cup \Gamma_\phi\subset B\times B$.
\end{prop}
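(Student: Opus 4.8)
The plan is to translate this topological classification into an arithmetic count and then invoke Nielsen's \"Aquivalenzsatz. Fix a ramification type $(q\mid r_1,\dots,r_m)$ from Proposition \ref{prop: ramification fpf autos}. As recalled before the statement, a fixed-point-free automorphism of order $d$ realising it is encoded by its ramification tuple $\underline a=(\rho(\gamma_1),\dots,\rho(\gamma_m))\in(\IZ/d)^m$, where by Lemma \ref{lem: tuples for auto} each $a_j$ has order exactly $r_j$, one has $\sum_j a_j=0$, and, when $q=0$, the $a_j$ must generate $\IZ/d$ (for $q>0$ surjectivity is free, since one may normalise $\rho(\alpha_1)=1$, which also shows the genus part contributes nothing to the Nielsen type). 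The Nielsen type records only the multiplicities $(n_a)_{a\in\IZ/d}$, so two admissible tuples have the same Nielsen type exactly when they coincide as multisets; by the \"Aquivalenzsatz this is precisely the condition for the two automorphisms to be topologically equivalent. Thus the statement reduces to counting, row by row, the admissible multisets $\{a_1,\dots,a_m\}$.

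First I would dispatch the unramified rows ($m=0$): there the Nielsen type is empty, so each is realised by a unique Nielsen type. For a ramified row I would enumerate directly. The elements of order $r$ in $\IZ/d$ are the $\varphi(r)$ classes $(d/r)k$ with $\gcd(k,r)=1$; it is convenient to pass through the Chinese remainder decomposition $\IZ/d\cong\prod_i\IZ/p_i^{\nu_i}$, so that the order of an element is the least common multiple of its components and both the vanishing-sum and (for $q=0$) the generating condition can be verified one prime at a time. Running through every row of Proposition \ref{prop: ramification fpf autos} in this way, I expect to find exactly one admissible multiset in all cases save the finitely many collected in the present table, where two or three survive.

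It remains to account for the symbol $\sim$. For the configuration $\Gamma_\id\cup\Gamma_\phi\subset B\times B$ the normalisation carried out just before the statement identifies its topological type with that of $\phi$ only up to replacing $\phi$ by $\phi^{-1}$. On ramification tuples this inversion acts by $a\mapsto -a$, hence on Nielsen types by $n_a\mapsto n_{-a}$. I would therefore mark with $\sim$ exactly those exceptional multisets exchanged by negation; the remaining ones are fixed as multisets under $a\mapsto -a$ and so give genuinely distinct configurations of disjoint graphs.

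The individual computations are elementary and no single step is hard; the real obstacle is the sheer volume of the case analysis together with the need to track the interaction of the vanishing-sum condition with the generating condition in the $q=0$ rows, where a multiset of elements of the correct orders may fail to generate $\IZ/d$ (or coincide with another under negation) and must be discarded with care.
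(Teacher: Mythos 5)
Your proposal is correct and follows essentially the same route as the paper, which sets up Nielsen's \"Aquivalenzsatz, the definition of Nielsen type, and the normalisation of a pair of disjoint graphs to $\Gamma_\id\cup\Gamma_{(\phi_1\circ\phi_2^{-1})^{\pm1}}$, and then simply asserts that the case-by-case analysis of admissible Nielsen types is routine. Your reduction to counting multisets in $\IZ/d$ with prescribed element orders, zero sum, and (for $q=0$) the generation condition, together with the identification of $\sim$ with the negation action $a\mapsto -a$ induced by $\phi\mapsto\phi^{-1}$, is exactly the intended argument, spelled out in somewhat more detail than the paper provides.
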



 \end{document}